\documentclass[reqno,11pt]{amsart}
\usepackage[T1]{fontenc}
\usepackage[utf8]{inputenc}
\usepackage[english]{babel}
\usepackage[left=2.5cm,right=2.5cm,bottom=2.2cm,top=2.2cm]{geometry}
\usepackage{graphicx,amsmath,amsfonts,amssymb,amsthm,thmtools}% Required for inserting 
\usepackage{dsfont}
\usepackage{mathtools,xcolor,color}
\usepackage{mathrsfs} 
\usepackage{enumitem}
\usepackage{float}
\numberwithin{equation}{section}
\usepackage{lipsum}
\usepackage{comment} 
\usepackage[hypertexnames=false,colorlinks=true,linkcolor=blue,citecolor=green!45!black,urlcolor=purple]{hyperref} %it correct the hyperlink using cleveref
\usepackage[nameinlink,noabbrev]{cleveref}

\newcommand{\sign}{\operatorname{sign}}
\renewcommand{\P}{\mathbb{P}}

\title{Rapid stabilization for Stokes and Navier--Stokes systems with localized disturbance } 
\author[C. Calle]{Christian Calle}
\author[P. Guzmán]{Patricio Guzmán} 
\address{Departamento de Matemática, Universidad Técnica Federico Santa María, Valparaíso, Chile.} 
\email{ccalle@usm.cl} 
\email{patricio.guzmanm@usm.cl} 

\author[H. Parada]{Hugo Parada} \address{Université de Lorraine, CNRS, Inria, IECL, F-54000 Nancy, France.} \email{hugo.parada@inria.fr} 

\author[J. Prada]{Jefferson Prada$^*$} \address{Facultad de Matemáticas, Pontificia Universidad Católica de Chile, Santiago, Chile.} \email{jefferson.prada@uc.cl} 
\date{\today}
\thanks{$^*$ Corresponding author.}
\thanks{C. Calle received financial support by ANID Becas/Doctorado Nacional 21252433. P. Guzm\'an received financial support from Fondecyt 11240290. H. Parada is currently supported by the Agence Nationale de la Recherche through the QuBiCCS project (ANR-24-CE40-3008).  J. Prada received financial support by ANID/Subdirección del Capital Humano/ Fondecyt Postdoctorado 2024-3250240.}
\newtheorem{theo}{Theorem}[section]
\newtheorem{lemma}[theo]{Lemma}
\newtheorem{prop}[theo]{Proposition}
\newtheorem{rem}[theo]{Remark}
\newtheorem{cor}[theo]{Corollary}
\newtheorem{defi}[theo]{Definition}

\subjclass[2020]{Primary 35Q30, 93D15; Secondary 93D09, 93D23, 34G25, 34G20}
\keywords{Stokes equations, Navier--Stokes equations, rapid exponential stabilization, localized disturbances, nonsmooth feedback, differential inclusions}
\begin{document}

\begin{abstract}
In this study, we analyze the rapid stabilization of Stokes and the two- and three-dimensional Navier--Stokes systems in the presence of an unknown spatially localized disturbance. A  multivalued feedback control strategy is proposed which combines the Lyapunov frequency method with the multivalued sign operator. This methodology connects Lyapunov-based stability analysis with spectral inequalities, while the inclusion of the sign operator provides robustness against disturbance. The closed-loop system is governed by a differential inclusion, for which well-posedness is proved via the theory of maximal monotone operators. This approach guarantees global rapid exponential stabilization for the Stokes system and local rapid exponential stabilization for the two- and three-dimensional Navier--Stokes systems, with arbitrarily large disturbance bounds. In three dimensions, we obtain global strong solutions and rapid exponential stabilization for initial data sufficiently small in the $L^2$-norm with a prescribed, arbitrarily large bound in the $H^1$-norm.
\end{abstract}

\maketitle

%\tableofcontents

\section{Introduction}
\label{sec:intro}
Let $\Omega$ be an open nonempty bounded connected set in $\mathbb{R}^n$, $n\geq2,$ with smooth boundary. Let the controlled domain $\omega\subset \Omega$ be a nonempty open subset. We are interested in the rapid stabilization of the $n$-dimensional incompressible Stokes system and the two- and three-dimensional incompressible Navier--Stokes systems with internal control and an unknown disturbance:
\begin{equation}\label{P_S}
\begin{cases}
\partial_ty-\Delta y+\nabla p = \mathds 1_\omega (u+d), & \text{ in } \Omega\times(0,\infty),\\
\nabla  \cdot y =0, & \text{ in } \Omega\times(0,\infty),\\
y=0, &\text{ on }\partial \Omega\times(0,\infty), \\
y(0)=y_0, & \text{ in } \Omega.
\end{cases}
\end{equation}
and 
\begin{equation}\label{P_N-S}
\begin{cases}
\partial_t y - \Delta y  +(y \cdot \nabla )y+\nabla p = \mathds 1_\omega (u+d), & \text{ in } \Omega\times(0,\infty),\\
\nabla  \cdot y =0, & \text{ in } \Omega\times(0,\infty),\\
y=0, &\text{ on }\partial \Omega\times(0,\infty), \\
y(0)=y_0, & \text{ in } \Omega.
\end{cases}
\end{equation}
Here for $T>0$, $y:\Omega\times(0,T)\to \mathbb{R}^n$ represents the velocity of a fluid, $p:\Omega\times(0,T)\to \mathbb{R}$ denotes the pressure of this fluid, $u:\omega\times (0,T)\to \mathbb{R}^n$ is the control acting in $\omega$ and $d:\omega\times (0,T)\to \mathbb{R}^n$ is an unknown disturbance, and $y_0:\Omega\to \mathbb{R}^n$ is a given initial condition. Also, let us denote by $\nu(x)$ the outward normal to $\partial\Omega$ at the point $x\in \partial\Omega$ and use the following notation
$$\mathcal V :=\{ y\in C_0^\infty (\Omega)^n : \nabla \cdot y =0, \ \text{ in } \Omega\}, \quad H:=\overline{\mathcal V}^{L^2(\Omega)^n}, \qquad V:=\overline{\mathcal V}^{H_0^1(\Omega)^n}.$$
$$\mathbf{L^2}(\Omega) = \{ L^2(\Omega)\}^n, \quad \mathbf{H_0^1}(\Omega)=\{H_0^1(\Omega)\}^n, \quad \mathbf{H^2}(\Omega) =\{H^2 (\Omega) \}^n.$$
In particular, we have by \cite[Chapter~I, Theorems~1.4 and~1.6]{temam84}, these spaces admit the following characterizations:
\begin{equation*}
\begin{gathered}
H=\{y\in \mathbf{L^2}(\Omega) \ | \  \nabla\cdot y =0 \text{ in } \Omega, \ y\cdot \nu =0 \text{ on } \partial\Omega \},\\
V= \{ y\in\mathbf{H_0^1}(\Omega) \ | \  \nabla\cdot y = 0 \text{ in } \Omega\}.
\end{gathered}
\end{equation*}
Here, the divergence is understood in the sense of distributions, and the boundary condition in the characterization of $H$ is understood in the sense of the normal trace.\\

We seek to achieve exponential stabilization at a prescribed decay rate despite the unknown disturbance $d$. The feedback will depend on the state and on an a priori bound for the disturbance, without requiring knowledge of $d$ itself. Let $D>0$ be a fixed known constant. We assume that the unknown disturbance $d$ satisfies:
\begin{enumerate}[label=\text{(A\arabic*)}]
\item\label{A_1} $d\in L^1_{\mathrm{loc}}(0,\infty;\mathbf{L^2}(\omega))$.
\item\label{A_2} $\|d(\cdot,t)\|_{\mathbf{L^2}(\omega)}\leq D$ for almost every $t>0$.
\end{enumerate}
The problem considered here lies within the broader study of control and stabilization of distributed parameter systems. A foundational contribution to this theory was made by J.-L. Lions \cite{lions1971,lions1988}, who established a functional analytic framework for the controllability and stabilization of distributed parameter systems. From this perspective, several feedback stabilization methods were later developed, including spectral techniques, Riccati-based approaches, and backstepping transformations, among others.\medskip

In the parabolic setting, stabilization has been developed through a variety of spectral and feedback methods. Triggiani \cite{triggiani1975,triggiani1980} established abstract stabilizability and boundary feedback results based on compact resolvent and spectral arguments, later extended to nonlinear and semilinear systems by Barbu and Triggiani \cite{bt2004} and Barbu and Wang \cite{barbuwang2003}. Further approaches include Riccati-based feedback laws for reaction--diffusion systems \cite{breiten2014}, spectral characterizations via the Fattorini criterion \cite{badra2014}, and, for the heat equation, boundary feedback, Volterra and Fredholm backstepping transformations, finite-dimensional spectral controllers, and linear-quadratic feedback laws \cite{l2003,gp2020,ks2008}. Xiang~\cite{xiang2023small,X2024} developed the frequency Lyapunov method, which combines a Lebeau--Robbiano spectral inequality \cite{lr1995} with a finite-dimensional feedback acting on the low-frequency modes. This yields exponential stabilization with an arbitrarily prescribed decay rate, together with explicit estimates on the stabilization cost. The same strategy also provides small-time null controllability and finite-time stabilization, and has been developed in several settings, including heat flows \cite{coronxiang2025}, stochastic problems \cite{hlbp2026}, and boundary control \cite{xiao2025}.\medskip

% Disturbances and Methods
Robust stabilization becomes substantially more delicate when the system is affected by perturbations whose size, structure, or channel of action is not fully known. In this context, several strategies have been developed depending on how the disturbance interacts with the dynamics. For matched disturbances, where the perturbation enters through the control channel, \textit{Sliding Mode Control} provides robustness by driving trajectories onto a suitable sliding manifold in finite time \cite{bmp2025,gj2013,gl2014}. \textit{Active Disturbance Rejection Control} has also been used to stabilize multidimensional heat equations with disturbances entering through the control channel \cite{zj2017}. For disturbances generated by a finite-dimensional exosystem, an approach based on the internal model principle provides robust output regulation for boundary-controlled multidimensional heat equations \cite{zgp2025}. Complementing these general frameworks, Guzmán and Hernández \cite{gh2023,gh2025} stabilized unstable heat equations under unknown boundary disturbances by combining backstepping, Lyapunov methods, and multivalued sign feedbacks. More recently, multidimensional heat equations with unknown localized disturbances were considered in \cite{guzman2025rapid}, combining Xiang’s frequency Lyapunov method with a multivalued sign operator to obtain rapid stabilization through a well-posed closed-loop differential inclusion.\medskip

By way of illustration, without being exhaustive, Table \ref{table:disturbance} lists some of the works addressing disturbances consulted in the literature.
\begin{table}[H]
\centering
\begin{tabular}{c|c|c}
Equation & Distributed disturbance & Boundary disturbance  \\
\hline
Heat & \cite{guzman2025rapid,zj2017,gp2020}& \cite{hcp2020,gh2023,gh2025,zgp2025} \\
Wave & \cite{fx2015,opu2011} & \cite{gj2013,gk2012,m2022,mgk2019,zw2018} 
\\
Beam & \cite{a2023} & \cite{gk2014} \\
%KdV & -- &
\end{tabular}
\caption{\label{literature:disturba}Stabilization of partial differential equations subjected to unknown disturbances.}
\label{table:disturbance}
\end{table}

For the Navier--Stokes equations, feedback stabilization has a long and well-established history. In Table \ref{literature} we present a selection of  relevant works.
\begin{table}[H]
\centering
\begin{tabular}{c|c|c}
System & Distributed control & Boundary control \\
\hline
Navier--Stokes 2D
& \cite{b2003,b2008,xiang2023small,lasieckapriyasadtriggiani2021}
& \cite{blt2006,badratakahashi2011} \\[2pt]
Navier--Stokes 3D
& \cite{b2003,b2008,bt2004,brs2011internal,breitenkunisch2020,lasieckapriyasadtriggiani2021,azmi2022}
& \cite{blt2006,f2004,raymond2007,badratakahashi2011} \\[2pt]
Stochastic Navier--Stokes (2D--3D)
& \cite{b2011,bp2011} & \\
\end{tabular}
\caption{Selected works on feedback stabilization of Navier--Stokes equations, classified by the location of the control.}
\label{literature}
\label{tl}
\end{table}
Feedback stabilization must be accompanied by a well-posedness argument for the nonlinear closed-loop system, this presents some challenges in the three dimensions case. Barbu and Triggiani \cite{bt2004} obtained local stabilization by finite-dimensional internal feedback in the fractional Stokes space $D(S^{1/4})$. Barbu, Rodrigues and Shirikyan \cite{brs2011internal} treated stabilization to nonstationary trajectories for small initial deviations in $V$. For boundary control, we refer to Barbu, Lasiecka and Triggiani \cite{blt2006} and Raymond \cite{raymond2007}. Badra and Takahashi \cite{badratakahashi2011} developed an abstract stabilization framework for nonlinear parabolic systems with applications to Navier--Stokes equations.

The functional setting is particularly important for the passage from linear stabilization to nonlinear well-posedness. Breiten and Kunisch \cite{breitenkunisch2020} constructed stabilizing feedbacks using generalized Lyapunov equations and established strong closed-loop well-posedness for small data in $V$. Lasiecka, Priyasad and Triggiani \cite{lasieckapriyasadtriggiani2021} proved local well-posedness and uniform stabilization in critical $L^q$-based Sobolev and Besov spaces using finite-dimensional localized internal controls. Receding horizon stabilization of three-dimensional flows to reference trajectories was studied by Azmi \cite{azmi2022}.

Robust control and feedback regularization provide further relevant precedents. Barbu and Sritharan \cite{barbusritharan1998} developed an $H^\infty$ theory for fluid dynamics, including a three-dimensional extension, with disturbance attenuation formulated through an energy-gain estimate. Balogh, Gilliam and Shubov \cite{baloghgilliamshubov1997} studied feedback regularization of Navier--Stokes equations. Here, our objective is exact rejection of an unknown matched disturbance together with a prescribed exponential decay rate. The disturbance need not decay in time or be small in amplitude.\medskip

Motivated by \cite{guzman2025rapid,xiang2023small}, we investigate whether an explicit and robust rapid-stabilization mechanism can be extended to the Navier--Stokes system. This is nontrivial due to incompressibility, pressure, the vector-valued Stokes dynamics, nonlinear convection, and the presence of an unknown localized disturbance. In this direction, this work develops an explicit rapidly stabilizing feedback for the two- and three-dimensional Navier--Stokes equations with an unknown spatially localized disturbance. In three dimensions, we combine the decay in $H$ with a bound in $V$, retaining the full Stokes dissipation in the higher-order energy estimate. This allows any prescribed disturbance bound $D$, at the cost of asking for a smaller initial $H$ norm.\medskip

We now state our main results. The first establishes disturbance rejection and rapid exponential stabilization
for the Stokes system.
\begin{theo}
\label{thm:sta-Stokes}
Let $n\geq2$, $\lambda>0$. There exist a multivalued feedback law $G_\lambda:H\longrightarrow 2^{\mathbf{L^2}(\Omega)}$
and a constant $C_\lambda\geq1$ such that, for every
disturbance $d$ satisfying \textup{\ref{A_1}--\ref{A_2}} and every $y_0\in H$, the closed-loop
Stokes system
\begin{equation}\label{eq:closed_loop_Stokes}
\begin{cases}
\partial_t y-\Delta y+\nabla p=\mathds 1_\omega (G_\lambda(y)+d), & \text{in }\Omega\times(0,\infty),\\
\nabla\cdot y=0, & \text{in }\Omega\times(0,\infty),\\
y=0, & \text{on }\partial\Omega\times(0,\infty),\\
y(0)=y_0, & \text{in }\Omega,
\end{cases}
\end{equation}
admits a unique generalized solution $y \in C([0,\infty);H)\cap L^2_{\mathrm{loc}}(0,\infty;V)$ satisfying,
\begin{equation}\label{eq:finalStokes}
\|y(t)\|_H\leq C_\lambda e^{-\frac{\lambda}{4} t}\|y_0\|_H,
\qquad t\geq0.
\end{equation}
\end{theo}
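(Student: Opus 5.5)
The plan is to combine Xiang's frequency Lyapunov method with a multivalued sign term that absorbs the disturbance.

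\textbf{Spectral setting.} Let $A=-\mathbb P\Delta$ be the Stokes operator on $H$, with $D(A)=\mathbf{H^2}(\Omega)\cap V$, where $\mathbb P$ is the Leray projector. $A$ is self-adjoint and positive with compact resolvent, so it has eigenvalues $0<\mu_1\le\mu_2\le\cdots\to\infty$ and an orthonormal eigenbasis $(e_j)$. Write $\Pi_N$ for the orthogonal projection onto $E_N=\mathrm{span}\{e_j:\mu_j\le N\}$. The key tool is a Lebeau--Robbiano type spectral inequality for Stokes eigenfunctions. I will use the form obtained through Carleman estimates by Chaves-Silva--Lebeau: for every $a\in E_N$,
\[
\|a\|_{H}\le Ce^{C\sqrt N}\,\|a\|_{\mathbf{L^2}(\omega)} .
\]

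\textbf{Feedback and Lyapunov estimate.} Following \cite{guzman2025rapid,xiang2023small}, choose a cutoff $N=N(\lambda)$ with $N\ge\lambda$ and a gain $\gamma>0$. Define
\[
G_\lambda(y)=-\gamma\,\mathds 1_\omega\Pi_N y-D\,\sign(\mathds 1_\omega y),
\]
where $\sign(z)=z/\|z\|_{\mathbf{L^2}(\omega)}$ if $z\ne0$ and $\sign(0)$ is the closed unit ball of $\mathbf{L^2}(\omega)$. This is the multivalued sign operator, i.e.\ the subdifferential of $z\mapsto\|z\|_{\mathbf{L^2}(\omega)}$. Formally, for $\sigma\in\sign(\mathds 1_\omega y)$, testing the equation with $y$ gives
\[
\tfrac12\tfrac{d}{dt}\|y\|^2+\|\nabla y\|^2=-\gamma(\Pi_Ny,y)_{\mathbf{L^2}(\omega)}-D\|y\|_{\mathbf{L^2}(\omega)}+(d,y)_{\omega}.
\]
By \ref{A_2}, the last two terms together are $\le0$, so the disturbance disappears. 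Split $y=\Pi_Ny+(I-\Pi_N)y=:y_L+y_H$. Then
\[
-\gamma(y_L,y_L+y_H)_\omega\le-\tfrac\gamma2\|y_L\|^2_\omega+\tfrac\gamma2\|y_H\|^2.
\]
The spectral inequality gives $\|y_L\|_\omega^2\ge C^{-2}e^{-2C\sqrt N}\|y_L\|^2$. We also have $\|\nabla y\|^2\ge\mu_1\|y_L\|^2+N\|y_H\|^2$. Now choose $\gamma=\lambda C^2e^{2C\sqrt N}$, and then pick $N$ large enough that $N-\gamma/2\ge\lambda/2$. This is possible because $\gamma$ grows only like $e^{2C\sqrt N}$, so the condition reads $N\gtrsim\lambda e^{2C\sqrt N}$. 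If that fails, I would instead use the weighted Lyapunov functional $\|y\|^2+\|y_L\|^2$-type combinations as in \cite{xiang2023small}, which move the spectral constant onto the low modes alone. The outcome should be $\frac{d}{dt}V\le-\frac\lambda2 V$ for an equivalent Lyapunov functional $V$, whose equivalence constants produce $C_\lambda$. This gives \eqref{eq:finalStokes} with rate $\lambda/4$ on the norm.

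\textbf{Well-posedness.} Write the closed loop as
\[
y'+Ay+\gamma\,\mathbb P\mathds 1_\omega\Pi_Ny+D\,\mathbb P\mathds 1_\omega\sign(\mathds 1_\omega y)\ni\mathbb P\mathds 1_\omega d.
\]
The operator $B(y)=\mathbb P\mathds1_\omega\sign(\mathds1_\omega y)$ is the subdifferential on $H$ of the convex continuous functional $\phi(y)=\|y\|_{\mathbf{L^2}(\omega)}$, hence maximal monotone. $A$ is also a subdifferential, and $\phi$ is continuous on $H$, so $A+DB$ is maximal monotone by the sum rule. The bounded linear term is Lipschitz, so it is handled as a Lipschitz perturbation. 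Brezis' theory of evolution equations governed by maximal monotone operators then gives a unique strong solution for $y_0\in D(A)$ and $d\in W^{1,1}_{\mathrm{loc}}$. Closing the estimates by density, using the contraction estimate $\|y_1-y_2\|(t)\le e^{ct}\big(\|y_1(0)-y_2(0)\|+\int_0^t\|d_1-d_2\|\big)$, gives the unique generalized solution in $C([0,\infty);H)$ for $y_0\in H$ and $d$ satisfying \ref{A_1}. The $L^2_{\mathrm{loc}}(V)$ regularity comes from the energy identity, which passes to the limit.

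\textbf{Main obstacle.} The delicate step is making the Lyapunov computation rigorous for generalized solutions. The inequality must be derived for strong solutions with a selection $\sigma(t)$, and then transferred through the density limit. This requires uniformity of the constants and lower semicontinuity, together with the precise spectral inequality for the Stokes system including its $\sqrt N$ dependence.
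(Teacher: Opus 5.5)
There is a genuine gap in the stabilization step: your sign feedback is matched to a different Lyapunov functional from the one that actually gives the rate $\lambda$.

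With the unweighted energy $\|y\|_H^2$ and your splitting, the high modes require $N-\gamma/2\geq\lambda/2$ with $\gamma=\lambda C^2e^{2C\sqrt N}$, i.e.\ $N\gtrsim\lambda C^2e^{2C\sqrt N}$. Your justification runs the wrong way. Because $e^{2C\sqrt N}$ outgrows $N$ (for $C\geq1$, $Ne^{-2C\sqrt N}\leq e^{-2}$), this condition holds for no $N$ once $\lambda>2e^{-2}$. Changing the Young weight in the cross term does not help, since your choice $\varepsilon=\frac12$ is already optimal.

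So you are forced to use Xiang's functional $V(y)=\mu_\lambda\|\mathbb P_Ny\|_H^2+\|\mathbb P_N^\perp y\|_H^2=(y,\mathscr C_\lambda y)_H$, as your fallback suggests. But then the forcing enters $\frac{d}{dt}V$ as $2(\mathds 1_\omega\mathscr C_\lambda y,\,d-D\sigma)_{\mathbf{L^2}(\Omega)}$ with $\sigma\in\sign(\mathds 1_\omega y)$. The best bound valid for all admissible $d$ is $2D\bigl(\|\mathds 1_\omega\mathscr C_\lambda y\|_{\mathbf{L^2}(\Omega)}-(\mathds 1_\omega\mathscr C_\lambda y,\sigma)_{\mathbf{L^2}(\Omega)}\bigr)$. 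This term is nonnegative and vanishes only when $\mathds 1_\omega\mathscr C_\lambda y$ is a nonnegative multiple of $\mathds 1_\omega y$. It is homogeneous of degree one in $y$, so it cannot be absorbed by $-\frac\lambda2V$ near $y=0$. In short, your disturbance cancellation and your decay estimate never hold for the same functional.

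The missing idea is to put the weight inside the sign. Take $G_\lambda(y)=-\gamma_\lambda\mathbb P_Ny-D\sign_{\mathbf{L^2}(\Omega)}(\mathds 1_\omega\mathscr C_\lambda y)$, with spectral cutoff at level $\lambda$, $\gamma_\lambda=\lambda C_1e^{C_1\sqrt\lambda}$ and $\mu_\lambda=(\gamma_\lambda/\lambda)^2$. Then $(\mathds 1_\omega\mathscr C_\lambda y,d-D\eta)_{\mathbf{L^2}(\Omega)}\leq0$ by exactly the Cauchy--Schwarz argument you used.

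This change in turn breaks your well-posedness argument. The map $y\mapsto P\mathds 1_\omega\sign(\mathds 1_\omega\mathscr C_\lambda y)$ is not the subdifferential of $\Phi(y)=D\|\mathds 1_\omega\mathscr C_\lambda y\|_{\mathbf{L^2}(\Omega)}$ for the usual inner product of $H$. It is in general not monotone there: it fails already for $\omega=\Omega$ on $\operatorname{span}\{e_1,e_{N+1}\}$ once $\mu_\lambda>2$. It is also not Lipschitz, so neither the sum rule in $H$ nor a Lipschitz perturbation applies.

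The paper resolves this by working in $H_\mu$, i.e.\ $H$ with $(u,v)_\mu=(u,\mathscr C_\lambda v)_H$. There the $\mu$-adjoint of $\mathds 1_\omega\mathscr C_\lambda$ is $P\mathds 1_\omega$, so the sign term is exactly $\partial_\mu\Phi$. Moreover, $L_\lambda=S+\gamma_\lambda P(\mathds 1_\omega\mathbb P_N\,\cdot)$ satisfies $(L_\lambda z,z)_\mu\geq\frac12\|z\|_V^2$ by the same computation as the Lyapunov estimate. Hence $L_\lambda+\partial_\mu\Phi$ is maximal monotone in $H_\mu$.

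With these two changes, the rest of your plan goes through essentially as in the paper: strong solutions for regular data, the $\mu$-contraction estimate for density, and lower semicontinuity for the $L^2_{\mathrm{loc}}(0,\infty;V)$ bound and the decay.
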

The second result establishes disturbance rejection and local rapid exponential stabilization for the two-dimensional Navier--Stokes system.
\begin{theo}
\label{thm:sta-NS} 
Let $n=2$, $\lambda>0$. There exist a multivalued feedback law
$G_\lambda:H\longrightarrow 2^{\mathbf{L^2}(\Omega)}$ and constants $C_\lambda\geq1$ and $\rho_\lambda>0$ such that, for each disturbance $d$ satisfying \textup{\ref{A_1}--\ref{A_2}} and every $y_0\in H$ with $\|y_0\|_H\leq\rho_\lambda$, the closed-loop Navier--Stokes system
\begin{equation}\label{eq:closed_loop_Navier-Stokes}
\begin{cases}
\partial_t y-\Delta y+(y \cdot \nabla )y +\nabla p=\mathds 1_\omega (G_\lambda(y)+d), & \text{in }\Omega\times(0,\infty),\\
\nabla\cdot y=0, & \text{in }\Omega\times(0,\infty),\\
y=0, & \text{on }\partial\Omega\times(0,\infty),\\
y(0)=y_0, & \text{in }\Omega,
\end{cases}
\end{equation}
admits a unique generalized solution $y \in C([0,\infty);H)\cap L^2_{\mathrm{loc}}(0,\infty;V)$ satisfying,
\[ \|y(t)\|_H\le C_\lambda e^{-\frac{\lambda}{4} t}\|y_0\|_H, \quad t\geq0.\] 
\end{theo}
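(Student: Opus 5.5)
We prove Theorem \ref{thm:sta-NS} by reusing the feedback built for Theorem \ref{thm:sta-Stokes}, adding the nonlinearity as a locally Lipschitz perturbation of a maximal monotone operator, and running a bootstrap on the energy.

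\textbf{Feedback.} Let $A$ be the Stokes operator on $H$, with eigenpairs $(\mu_k,e_k)$, and let $\Pi_N$ be the orthogonal projection onto $\mathrm{span}\{e_k:\mu_k\le \Lambda\}$. By the Lebeau--Robbiano type spectral inequality for Stokes eigenfunctions,
\[
\|\Pi_N z\|_H\le Ce^{C\sqrt{\Lambda}}\|z\|_{\mathbf{L^2}(\omega)} .
\]
So the Gram-type operator $\Pi_N\mathds 1_\omega\Pi_N$ is invertible on the finite-dimensional range. Following the frequency Lyapunov method, we take the feedback
\[
G_\lambda(y)=-\mathds 1_\omega\big(M\,K_\Lambda y\big)-D\,\mathrm{Sign}_{\mathbf{L^2}(\omega)}(\mathds 1_\omega y),
\]
where $\mathrm{Sign}(v)=v/\|v\|$ if $v\neq0$ and is the closed unit ball otherwise. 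Here $K_\Lambda$ is a bounded linear operator built from $\Pi_N$, and $\Lambda\sim\lambda$, $M=M(\lambda)$ are chosen as in the Stokes case. We use the Lyapunov functional
\[
V(y)=\|y\|_H^2+\gamma\|\Pi_N y\|_H^2 .
\]
The sign term is the subdifferential of the convex function $D\|\mathds 1_\omega y\|_{\mathbf{L^2}(\omega)}$. Hence $y\mapsto Ay+\mathbb P\mathds 1_\omega D\,\mathrm{Sign}(\mathds 1_\omega y)$ is maximal monotone, and the linear bounded feedback part is a Lipschitz perturbation. Pairing with $y$ and using \ref{A_2},
\[
\langle \mathds 1_\omega(d-D\xi),y\rangle\le D\|y\|_{\mathbf{L^2}(\omega)}-D\|y\|_{\mathbf{L^2}(\omega)}=0 ,
\]
so the disturbance cancels exactly. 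The Stokes analysis then gives
\[
\frac{d}{dt}V\le-\frac{\lambda}{2}V-\|\nabla y\|^2\cdot c
\]
for some $c>0$, with part of the dissipation retained. I take this inequality, including the retained dissipation, as the output of the proof of Theorem \ref{thm:sta-Stokes}.

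\textbf{Well-posedness with the nonlinearity.} We write the closed loop as
\[
y'+\mathcal A y+B(y)\ni \mathbb P\mathds 1_\omega d ,
\]
where $\mathcal A$ is maximal monotone on $H$ and $B(y)=\mathbb P(y\cdot\nabla)y$. In 2D the map $B$ is not Lipschitz on $H$, so we proceed by Galerkin/Yosida regularization. We replace $\mathrm{Sign}$ by its Yosida approximation $\mathrm{Sign}_\varepsilon$, which is Lipschitz. Standard 2D Navier--Stokes theory then gives a unique global weak solution $y_\varepsilon\in C([0,T];H)\cap L^2(0,T;V)$ with $y_\varepsilon'\in L^2(0,T;V')$. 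The energy estimate is uniform in $\varepsilon$ because $\langle B(y),y\rangle=0$ and $\mathrm{Sign}_\varepsilon$ is monotone with $|\mathrm{Sign}_\varepsilon|\le1$.

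We pass to the limit using Aubin--Lions compactness in $L^2(0,T;H)$. The selection $\xi_\varepsilon\rightharpoonup\xi$ converges weakly-$*$ in $L^\infty$. The inclusion $\xi\in\mathrm{Sign}(\mathds 1_\omega y)$ follows from the maximal monotonicity (demiclosedness) of $\mathrm{Sign}$ combined with the strong convergence of $y_\varepsilon$.

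Uniqueness in 2D comes from subtracting two solutions. The sign terms contribute a nonpositive quantity by monotonicity. The convective term is handled by Ladyzhenskaya's inequality:
\[
|\langle B(y_1)-B(y_2),w\rangle|\le \|\nabla w\|\,\|w\|\,\|\nabla y_2\|\le \tfrac12\|\nabla w\|^2+C\|\nabla y_2\|^2\|w\|^2 .
\]
Gronwall then closes since $\|\nabla y_2\|^2\in L^1$. This yields the generalized solution in the class stated. Since $\langle B(y),y\rangle=0$, the nonlinearity is globally harmless for existence in 2D.

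\textbf{Decay and the main obstacle.} The key difficulty is the Lyapunov estimate, because $\langle B(y),y\rangle=0$ but $\langle B(y),\Pi_N y\rangle\neq0$. We estimate this cross term using that $\Pi_N y$ is smooth, with $\|\nabla\Pi_N y\|_{L^\infty}\le C_\Lambda\|y\|_H$:
\[
|\langle B(y),\Pi_N y\rangle|=|\langle (y\cdot\nabla)\Pi_N y,y\rangle|\le C_\Lambda\|y\|_H^3 .
\]
Therefore
\[
\frac{d}{dt}V\le-\frac{\lambda}{2}V+\gamma C_\Lambda V^{3/2} .
\]
If $V(0)\le c\rho_\lambda^2$ with $\gamma C_\Lambda\sqrt{V(0)}\le\lambda/4$, a continuity/bootstrap argument keeps $V$ nonincreasing. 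It then gives $V(t)\le e^{-\lambda t/4}V(0)$, hence
\[
\|y(t)\|_H\le\sqrt{1+\gamma}\,e^{-\lambda t/8}\|y_0\|_H .
\]
To reach the rate $\lambda/4$ in $\|y\|_H$, we choose the Stokes construction with rate $\lambda$ for $V$ and the smallness so that the perturbation eats at most half. Concretely, we adjust the parameter so that $\frac{d}{dt}V\le-\lambda V+\gamma C_\Lambda V^{3/2}$ before halving.

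This computation is justified at the regularized level and is preserved in the limit $\varepsilon\to0$ by lower semicontinuity. The constant $\rho_\lambda$ is independent of $D$, since the disturbance cancels exactly.
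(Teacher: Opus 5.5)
\textbf{The gap: the disturbance does not cancel.} You apply the sign to $\mathds 1_\omega y$, but your Lyapunov functional is $V(y)=\bigl(y,(I+\gamma\Pi_N)y\bigr)_H$. So the disturbance and the sign term enter $\frac{d}{dt}V$ through
\[
2\bigl(d-D\xi,\ \mathds 1_\omega(I+\gamma\Pi_N)y\bigr)_{\mathbf{L^2}(\Omega)},\qquad \xi\in\mathrm{Sign}(\mathds 1_\omega y),
\]
and not through $2(d-D\xi,\mathds 1_\omega y)_{\mathbf{L^2}(\Omega)}$. Only the part paired with $y$ is nonpositive. The leftover $2\gamma(d-D\xi,\mathds 1_\omega\Pi_N y)_{\mathbf{L^2}(\Omega)}$ has no sign and is merely bounded by $4\gamma D\|\Pi_N y\|_{\mathbf{L^2}(\omega)}$. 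For instance, set $w:=\mathds 1_\omega(I+\gamma\Pi_N)y$ and $d=Dw/\|w\|_{\mathbf{L^2}(\Omega)}$. Then the whole contribution equals $2D\bigl(\|w\|_{\mathbf{L^2}(\Omega)}-(\xi,w)_{\mathbf{L^2}(\Omega)}\bigr)\geq0$, which is of order $D\|y\|_H$ unless $w$ happens to be parallel to $\mathds 1_\omega y$.

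A term linear in $y$ cannot be absorbed by $-\frac{\lambda}{2}V$ near the origin. All you get is $\frac{d}{dt}V\leq-\frac{\lambda}{2}V+4\gamma D\sqrt{V}$, i.e.\ ultimate boundedness. Likewise, the inequality you import from Theorem~\ref{thm:sta-Stokes} is not what that proof gives for your feedback. The sign has to act on the gradient of the Lyapunov functional. The paper uses $-D\sign_{\mathbf{L^2}(\Omega)}(\mathds 1_\omega\mathscr C_\lambda y)$ with $\mathscr C_\lambda=\mu_\lambda\P_N+\P_N^\perp$ (your $I+\gamma\Pi_N$), which yields exactly \eqref{eq:ineq_sign}.

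\textbf{Consequences for well-posedness.} With the corrected feedback, the map $y\mapsto P\bigl(\mathds 1_\omega\sign(\mathds 1_\omega\mathscr C_\lambda y)\bigr)$ is in general not monotone in $H$. It is the subdifferential of $D\|\mathds 1_\omega\mathscr C_\lambda y\|_{\mathbf{L^2}(\Omega)}$ for the weighted product $(u,\mathscr C_\lambda v)_H$. So your uniqueness step (``the sign terms contribute a nonpositive quantity'') fails for the plain $H$ energy of $w=y_1-y_2$. You must test the difference equation with $\mathscr C_\lambda w$, as the paper does. Then:
the sign terms give $D(\eta_1-\eta_2,\mathds 1_\omega\mathscr C_\lambda y_1-\mathds 1_\omega\mathscr C_\lambda y_2)_{\mathbf{L^2}(\Omega)}\geq0$;
the linear part is coercive by \eqref{eq:Llambda-coercive};
the convection produces an extra term $(\mu_\lambda-1)\langle B(y_1)-B(y_2),\P_N w\rangle_{V',V}$, which you control with $\|\P_N w\|_V\leq\sqrt{\lambda}\|w\|_H$ before applying Gronwall.

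Your existence scheme survives, since the uniform energy bound only uses $|\mathrm{Sign}_\varepsilon|\leq1$. However, the Yosida approximation only gives $(d-D\,\mathrm{Sign}_\varepsilon(v),v)_{\mathbf{L^2}(\Omega)}\leq D\varepsilon/4$. The regularized bootstrap therefore carries an $O(\varepsilon)$ source that you must track before letting $\varepsilon\to0$.

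\textbf{Your cross-term estimate versus the paper's.} Once repaired, bounding the cross term via $\|\nabla\Pi_N y\|_{L^\infty(\Omega)}\leq C_\Lambda\|y\|_H$ is a valid alternative; it is the paper's Lemma~\ref{lem:finite-projection-convection}, used there only in 3D. The paper's 2D route instead uses $|b(y,y,\P_N y)|\leq\sqrt2\|y\|_H\|y\|_V^2$ and absorbs it into the retained dissipation $-\frac12\|y\|_V^2$ while $\|y\|_H<r_\lambda:=1/(4\sqrt2(\mu_\lambda-1))$. That keeps the full rate $\lambda/2$ for $V$. Your version avoids the retained dissipation, at the cost of redesigning the linear feedback for a larger rate.
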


Our final result concerns the three-dimensional system.
\begin{theo}\label{thm:sta-NS-3D}
Let $n=3$, $\lambda>0$, $D>0$, and $R>0$. There exist a multivalued feedback law
$G_\lambda:H\longrightarrow 2^{\mathbf{L^2}(\Omega)}$ and constants $C_\lambda\geq1$ and $\rho_{\lambda,D,R}>0$ such that,
for every disturbance $d$ satisfying
\textup{\ref{A_1}--\ref{A_2}} and every $y_0\in V$ with
\begin{equation}\label{eq:smallness-NS-3D}
\|y_0\|_V\leq \frac R2,
\qquad
\|y_0\|_H\leq \rho_{\lambda,D,R},
\end{equation}
the closed-loop Navier--Stokes system \eqref{eq:closed_loop_Navier-Stokes} admits a unique global strong solution $y\in H^1(0,T;H)\cap L^2(0,T;D(S))\cap C([0,T];V)$, for every $T>0$, satisfying,
\begin{equation}\label{eq:decay-NS-3D}
\|y(t)\|_H\leq C_\lambda e^{-\frac{\lambda}{4}t}\|y_0\|_H, \qquad t\geq0,
\end{equation}
and the solution remains bounded in $V$, i.e., $\sup_{t\geq0}\|y(t)\|_V\leq \frac R{\sqrt2}$.
\end{theo}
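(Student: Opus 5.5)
Let $S$ denote the Stokes operator on $H$, with eigenpairs $(\mu_k,e_k)$, and let $\Pi_\mu$ be the orthogonal projection onto $E_\mu=\mathrm{span}\{e_k:\mu_k\le\mu\}$. The plan is to reuse the feedback built for the Stokes case (Theorem~\ref{thm:sta-Stokes}):
\[
G_\lambda(y)=-\gamma\,\Pi_\mu y\,\mathds 1_\omega-\ D\,\mathrm{Sign}_\omega(y),
\]
where $\mathrm{Sign}_\omega(y)=y/\|y\|_{\mathbf{L^2}(\omega)}$ if $y|_\omega\ne0$ and the closed unit ball of $\mathbf{L^2}(\omega)$ otherwise. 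The cutoff $\mu=\mu(\lambda)$ and gain $\gamma$ are chosen through the Lebeau--Robbiano spectral inequality $\|\varphi\|_H\le Ce^{C\sqrt\mu}\|\varphi\|_{\mathbf{L^2}(\omega)}$ for $\varphi\in E_\mu$, using the frequency Lyapunov functional
\[
\mathcal L(y)=\|y\|_H^2+A\|\Pi_\mu y\|_H^2 .
\]
In the linear case this gives $\tfrac{d}{dt}\mathcal L+\tfrac{\lambda}{2}\mathcal L\le -c\|y\|_V^2$, still with a spare fraction of the Stokes dissipation. The sign term absorbs $d$ because $\langle \mathds 1_\omega(d-D\,\xi),y\rangle\le0$ for every $\xi\in\mathrm{Sign}_\omega(y)$. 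I will keep this spare dissipation explicitly, so that $\tfrac{d}{dt}\mathcal L+\tfrac{\lambda}{2}\mathcal L+\kappa\|\nabla y\|^2\le(\text{nonlinear terms})$.

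\textbf{Step 1 (local strong solution).} I write the closed loop as the inclusion $y'+Sy+B(y)+\P\mathds 1_\omega\partial\Phi(y)\ni \P\mathds 1_\omega(d-\gamma\Pi_\mu y)$, with $\Phi(y)=D\|y\|_{\mathbf{L^2}(\omega)}$. This is a maximal monotone perturbation (subdifferential of a convex Lipschitz functional) plus a bounded linear term. Since $\Phi$ is Lipschitz on $H$, its minimal selection is bounded by $D$. I then regularize: replace $\partial\Phi$ by its Yosida approximation, or mollify the sign. Using the $V$-energy estimate below, uniform in the regularization, I obtain $H^1(0,T;H)\cap L^2(0,T;D(S))$ bounds and pass to the limit. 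The passage to the limit uses Aubin--Lions compactness together with the demiclosedness of maximal monotone graphs. This yields a strong solution on $[0,T^*)$ for as long as $\|y\|_V$ stays bounded. Uniqueness follows from the energy difference estimate: monotonicity kills the sign term, and the 3D trilinear term is controlled because both solutions are in $L^\infty V\cap L^2 D(S)$.

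\textbf{Step 2 ($V$-bound).} I test with $Sy$ and pick $\eta\in\mathrm{Sign}_\omega(y)$. This gives
\[
\tfrac12\tfrac{d}{dt}\|y\|_V^2+\|Sy\|^2\le |(B(y),Sy)|+(D+D+\gamma\|y\|_H)\|Sy\|.
\]
The convective term satisfies $|(B(y),Sy)|\le C\|y\|_V^{3/2}\|Sy\|^{3/2}$ in 3D, and also $\le C\|y\|_H^{1/4}\|y\|_V^{3/4}\cdots$ via interpolation. Using Young's inequality with the full $\|Sy\|^2$ retained, I get
\[
\tfrac{d}{dt}\|y\|_V^2+\|Sy\|^2\le C\|y\|_V^6+C(D^2+\gamma^2\|y\|_H^2).
\]
This is the main obstacle. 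The forcing $D^2$ does not decay, because the sign selection has norm $D$ even after $d$ has been rejected. So a naive Gronwall estimate would make $\|y\|_V$ grow. I would avoid this as follows. As long as $\|y\|_V\le R/\sqrt2$, the term $C\|y\|_V^6$ is at most $\tfrac12\mu_1\|y\|_V^2$ times a factor depending on $R$. That is not small, so I instead use the Agmon/Ladyzhenskaya bound $|(B(y),Sy)|\le C\|y\|_H^{1/2}\|y\|_V\|Sy\|^{3/2}$ (valid on bounded domains with $D(S)\subset H^2$). With $\|y\|_V\le R$, this gives $\le \tfrac14\|Sy\|^2+CR^4\|y\|_H^2$, so the nonlinearity is small whenever $\|y\|_H$ is small. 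The remaining $D^2$ forcing is then balanced by dissipation: $\|Sy\|^2\ge\mu_1\|y\|_V^2$, so $\|y\|_V^2\le \max(\|y_0\|_V^2, C D^2/\mu_1+\dots)$. That bound is fine only if $R$ is large relative to $D$. To handle an arbitrary $R$, I would exploit that $\Phi$ vanishes near $0$ only in the sense that $(\eta,Sy)$ can be estimated through the structure of $\mathrm{Sign}$: with $\eta=y/\|y\|_{L^2(\omega)}$, the quantity $(\mathds 1_\omega\eta, Sy)$ is, after localization, close to $\|\nabla y\|^2_\omega/\|y\|_\omega\ge0$ up to boundary/cutoff terms. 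Should that fail, I would take $\omega$-smooth regularization of the indicator in the estimate, or accept that $\rho$ depends on $D,R$ and use a continuation argument.

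\textbf{Step 3 ($H$-decay and bootstrap).} On $[0,T^*)$ with $\|y\|_V\le R/\sqrt2$, the $H$-energy has trilinear term $b(y,y,y)=0$. However, $\Pi_\mu$ in $\mathcal L$ produces $A\,b(y,y,\Pi_\mu y)\le AC_\mu\|y\|_H^2\|y\|_V$, which I absorb into $\kappa\|y\|_V^2$ when $\|y\|_H\le c\rho$. This gives $\mathcal L(t)\le e^{-\lambda t/2}\mathcal L(0)$, hence \eqref{eq:decay-NS-3D} with $C_\lambda=\sqrt{1+A}$. Feeding $\|y\|_H\le C_\lambda\rho$ into Step 2 keeps $\|y\|_V<R/\sqrt2$ for small $\rho_{\lambda,D,R}$. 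A continuity argument then gives $T^*=\infty$.
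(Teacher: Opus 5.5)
There is a genuine gap in Step 2, and it sits exactly where the theorem is nontrivial: $D$ and $R$ are arbitrary. You correctly isolate
\[
X'+Z\le CX^3+C\bigl(D^2+\gamma^2\|y\|_H^2\bigr),\qquad X:=\|y\|_V^2,\quad Z:=\|Sy\|_H^2 .
\]
However, the only dissipation you extract from $Z$ is $Z\ge\mu_1X$. That gives $X\lesssim\max\{X(0),D^2/\mu_1\}$, a bound independent of $\rho$. So your Step 3 claim that a small $\rho_{\lambda,D,R}$ keeps $\|y\|_V<R/\sqrt2$ is unsupported whenever $R^2\lesssim D^2/\mu_1$. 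A continuation argument cannot supply the missing a priori bound.

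The localization idea cannot rescue this, because the disturbance is not cancelled in the $V$-pairing. The term $(\mathds 1_\omega d,Sy)_{\mathbf{L^2}(\Omega)}$ can be of size $D\|Sy\|_H$. This holds even if $-D(\mathds 1_\omega\eta,Sy)$ had a sign, which it does not: $Sy=-P\Delta y$, and the sharp cutoff produces uncontrolled terms on $\partial\omega$. The $O(D^2)$ forcing must therefore be beaten by dissipation.

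The missing idea is the interpolation $X=(Sy,y)_H\le\|Sy\|_H\|y\|_H$. On the bootstrap set $\{\|y\|_H\le h,\ X\le R^2\}$ it gives $X^2\le h^2Z$, and this does two jobs:
\begin{itemize}
\item It absorbs the nonlinearity: $CX^3\le CR^2h^2Z\le\frac12 Z$.
\item It turns the remaining Stokes dissipation into superlinear damping, $X'+X^2/(2h^2)\le C(D^2+\gamma^2h^2)$, whose equilibrium is of order $h\sqrt{D^2+\gamma^2h^2}$.
\end{itemize}
Choosing $h$ with $Ch^2(D^2+\gamma^2h^2)\le R^4/8$ makes $R^2/2$ a barrier. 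Then $\rho_{\lambda,D,R}=h/(2\sqrt{1+A})$, together with the $H$-decay, closes the bootstrap. This is why $\rho$ must depend on both $D$ and $R$.

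Two further points need correcting. First, your sign feedback does not match your Lyapunov functional. With $\mathscr C:=I+A\Pi_\mu$ you have $\frac{d}{dt}\mathcal L=2(y',\mathscr C y)_H$, so the disturbance enters through $2(\mathds 1_\omega(d-D\xi),\mathscr C y)_{\mathbf{L^2}(\Omega)}$, not $2(\mathds 1_\omega(d-D\xi),y)$. With $\xi\in\mathrm{Sign}_\omega(y)$, the leftover $2A(\mathds 1_\omega(d-D\xi),\Pi_\mu y)_{\mathbf{L^2}(\Omega)}$ is unsigned and homogeneous of degree one in $y$. That rules out exponential decay near $0$. The sign must be taken of $\mathds 1_\omega\mathscr C y$, as in \eqref{eq:feedback-Stokes}. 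Correspondingly, in Step 1 the functional $D\|\mathds 1_\omega\mathscr C y\|_{\mathbf{L^2}(\Omega)}$ should be differentiated in the weighted inner product, so that its subdifferential is exactly $DP(\mathds 1_\omega\eta)$.

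Second, the bound $|(B(y),Sy)_H|\le C\|y\|_H^{1/2}\|y\|_V\|Sy\|_H^{3/2}$ is the two-dimensional estimate \eqref{eq:B-2d}. In three dimensions it fails by scaling: $y(x)\mapsto y(x/\ell)$ leaves the left side invariant and multiplies the right side by $\ell^{1/2}$. A valid replacement is $C\|y\|_H^{1/4}\|y\|_V\|Sy\|_H^{7/4}$, or simply the interpolation above.

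With these repairs, your Step 3 is sound. Your Step 1 is a workable alternative to the paper's truncation of $B$ with quasi-maximal monotone theory in $H_\mu$: you regularize the sign and pass to the limit by compactness and demiclosedness.
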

The exact definitions of strong and generalized solutions are given in \Cref{def:strong-solution} and \Cref{def:generalized-solution}, respectively. The constant $C_\lambda$ in the previous results can be chosen explicitly as
\[C_\lambda=\sqrt{\mu_\lambda}=C_1e^{C_1\sqrt{\lambda}},\]
where $C_1$ is the constant in the spectral inequality \eqref{lebeau_chaves}. In particular, $C_\lambda$ depends only on $\Omega$, $\omega$, and $\lambda$.\medskip

This paper is organized as follows. In \Cref{sec:pre}, we introduce the functional setting and the preliminary tools used throughout the paper. In \Cref{sec:sta-Stokes}, we construct the feedback law and derive a priori stabilization estimates for the Stokes system. In \Cref{sec:WP-Stokes}, we establish well-posedness of the closed-loop Stokes system using maximal monotone operator theory and complete the proof of \Cref{thm:sta-Stokes}. In \Cref{sec:sta-NS-2D}, we derive the two-dimensional a priori stabilization estimate. In \Cref{sec:WP-NS-2D}, we first construct strong solutions to the truncated systems in dimensions two and three. We then remove the truncation in dimension two, establish well-posedness of the original system, and complete the stabilization proof of \Cref{thm:sta-NS}. Finally, in \Cref{sec:sta-NS-3D}, we combine the estimates in $H$ and $V$ to remove the truncation in dimension three and establish global strong well-posedness and rapid exponential stabilization, proving \Cref{thm:sta-NS-3D}.
%%%%%%%%%%%%%%%%%%%%%%%%%%%%%%%%%%%%%%%%%%%%%%%%%%%%%%%
%%%%%%%%%%%%%%%%%%%%%%%%%%%%%%%%%%%%%%%%%%%%%%%%%%%%%%
%%%%%%%%%%%%%%%%%%%%%%%%%%%%%%%%%%%%%%%%%%%%%%%%%%%%%%%
\section{Preliminaries}
\label{sec:pre}
\subsection*{Functional framework}
Here, we consider the usual incompressible fluid mechanics framework \cite{temam84}. According to Helmholtz decomposition, for any $u\in \mathbf{L^2}(\Omega)$ there exist unique $v\in H$ and $p\in H^{1}(\Omega)/\mathbb{R}$ such that $u=v+\nabla p$, which defines the orthogonal Leray projection $P$ on
$\mathbf{L^2}(\Omega)$:
\[\begin{array}{rcl}
P: \mathbf{L^2}(\Omega) &\longrightarrow&  H,\\
u &\longmapsto& P u:=u-\nabla p.
\end{array}\]
Notice that, for every $f\in\mathbf{L^2}(\Omega)$,
\[\|P(\mathds1_\omega f)\|_H \leq\|\mathds1_\omega f\|_{\mathbf{L^2}(\Omega)} =\|f\|_{\mathbf{L^2}(\omega)}.\]
We equip $V$ with the inner product and associated norm
\[(u,v)_V:=\int_\Omega\nabla u:\nabla v\,dx,
\qquad \|u\|_V^2:=\int_\Omega|\nabla u|^2\,dx.\]Let $S$ be the Stokes operator defined as
\[\begin{array}{rcl}
S:D(S) &\longrightarrow &H,\\
u& \longmapsto &Su :=-P\Delta u.
\end{array} \]
where $D(S)=\mathbf{H^2}(\Omega)\cap V$. %\hugo{como nuestras demostraciones de WP son a tarvez de la teoria de operadores ese el buen dominio} 
We also denote by $S:V\to V'$ its variational extension, defined by
\[\langle Su,v\rangle_{V',V}:=(u,v)_V,\qquad u,v\in V.\]
For $u\in D(S)$, this definition agrees with the Stokes operator
under the identification $H\subset V'$. The Stokes operator on $H$ is positive and self-adjoint, with compact resolvent. Hence, there exists an orthonormal basis $\{e_j\}_{j\geq1}$ of $H$ consisting of eigenvectors of $S$:
\[Se_j=\tau_j e_j,\quad j\ge1, \quad \text{ where } \quad  0<\tau_1\le \tau_2\le\cdots\le \tau_j\le\cdots, \quad \tau_j\to\infty.\]
Equivalently, for every $i\ge1$, there exists a pressure $\pi_i\in H^1(\Omega)/\mathbb R$ such that
\begin{equation*}
\left\{
\begin{array}{rcll}
-\Delta e_i+\nabla \pi_i &=& \tau_i e_i,& \text{in }\Omega,\\
\nabla \cdot e_i &=& 0,
& \text{in }\Omega,\\
e_i &=& 0,
& \text{on }\partial\Omega.
\end{array}\right. \qquad (e_i,e_j)_H=\delta_{ij}. \end{equation*}
For $\lambda>0$, we set $N=N(\lambda):=\#\{j\geq1:\tau_j\leq\lambda\}$. Let $H_N:=\operatorname{Vect}\{e_j\}_{j=1}^N\subset V$ be the low frequency subspace of $H$, and let $\mathbb P_N$ be the orthogonal projection onto $H_N$ and $\mathbb P_N^\perp u:=u-\mathbb P_Nu$. For $u\in H$, we write $u_j:=(u,e_j)_H$, so that
\[\mathbb P_Nu=\sum_{j=1}^N u_j e_j, \quad \|u\|_H^2=\sum_{j\geq1}|u_j|^2,\qquad
\|\mathbb P_Nu\|_H^2=\sum_{j=1}^N|u_j|^2,\qquad
\|\mathbb P_N^\perp u\|_H^2=\sum_{j>N}|u_j|^2.\]
Moreover, for every $u\in V$,
\[\|u\|_V^2=\sum_{j\geq1}\tau_j|u_j|^2,\qquad
\|\mathbb P_Nu\|_V^2
=\sum_{j=1}^N\tau_j|u_j|^2
\leq\|u\|_V^2.\]
For some $\mu_\lambda>1$, we define the operator 
\[\begin{array}{rcl}
\mathscr{C}_\lambda: H & \longrightarrow &H,\\
u& \longmapsto & \mathscr C_\lambda u:= \mu_\lambda \P_N u+\P_N^\perp u.
\end{array}\]
Then, $\mathscr C_\lambda$ is bounded, self-adjoint, and positive on $H$. We consider the weighted inner product $ (\cdot, \cdot)_{\mu}$ and the norm $\|\cdot\|_\mu$ in $H$ by
\begin{equation}
\label{eq:mu-C}
(u, v)_{\mu} := (u,\mathscr{C}_\lambda v)_H = \mu_{\lambda} (\P_N u, \P_N v)_H + (\P_N^\perp u, \P_N^\perp v)_H, \qquad \|u\|^2_\mu = (u, u)_\mu.
\end{equation}
This norm is equivalent to the natural norm of $H$, that is,
\[\|u\|_H^2 \leq\|u\|_\mu^2 \leq\mu_\lambda\|u\|_H^2.\]
Moreover, $\mathscr C_\lambda$ maps $V$ into itself and satisfies
\[\|\mathscr C_\lambda u\|_V\leq\mu_\lambda\|u\|_V,\qquad u\in V.\]
We define the multi-valued sign operator on $\mathbf{L^2}(\Omega)$ by
\[\begin{array}{rcl}
\sign_{\mathbf{L^2}(\Omega)} : \mathbf{L^2}(\Omega)& \longrightarrow & 2^{\mathbf{L^2}(\Omega)},\\
u & \longmapsto &\sign_{\mathbf{L^2(\Omega)}}(u)
=
\begin{cases}
\left\{\dfrac{u}{\|u\|_{\mathbf{L^2}(\Omega)}}\right\}, & u\neq 0,\\
\{\eta\in \mathbf{L^2}(\Omega):\|\eta\|_{\mathbf{L^2}(\Omega)}\leq 1\}, & u=0.
\end{cases}
\end{array}\]
We shall use the following classical properties.
\begin{itemize}
\item If $\eta\in\sign_{\mathbf L^2(\Omega)}(f)$,  then $\|\eta\|_{\mathbf L^2(\Omega)}\leq1$.
\item If $f\in \mathbf{L^2}(\Omega)$ and
$\eta\in\sign_{\mathbf{L^2}(\Omega)}(f)$, then $(\eta,f)_{\mathbf{L^2}(\Omega)}=\|f\|_{\mathbf{L^2}(\Omega)}$.
\item $\sign_{\mathbf{L^2}(\Omega)}$ is monotone, namely,
\[\eta_i\in\sign_{\mathbf{L^2}(\Omega)}(f_i),\ i=1,2
\quad\Longrightarrow\quad
(\eta_1-\eta_2,f_1-f_2)_{\mathbf{L^2}(\Omega)}\geq0.\]
\item The graph is strongly--weakly closed: if
$f_k\to f \text{ in } \mathbf{L^2}(\Omega),$ $\eta_k\rightharpoonup \eta \text{ in } \mathbf{L^2}(\Omega),$ $\eta_k\in \sign_{\mathbf{L^2}(\Omega)}(f_k),$ then $\eta\in\sign_{\mathbf{L^2}(\Omega)}(f).$
\end{itemize}

\subsection*{Spectral inequality}
The following spectral inequality was proven by Chaves-Silva and Lebeau in \cite{chaves2016spectral} and is used by Xiang in \cite{xiang2023small}.
\begin{theo}[{\cite[Thm.~3.1]{chaves2016spectral}}]
\label{th:spectral-ineq}
There exists a constant $C_1 \geq 1$, depending only on $(\Omega, \omega)$ that is independent of $\lambda>0$ such that, for any $\lambda>0$ and for all $y \in H$ the following inequality holds,
\begin{equation}
\begin{aligned}\label{lebeau_chaves}
\|\mathbb{P}_Ny\|_{\mathbf{L^2}(\omega)}^2 \geq C_1^{-1} e^{-C_1 \sqrt{\lambda}}\|\mathbb{P}_Ny\|^2_H.
\end{aligned}
\end{equation}
\end{theo}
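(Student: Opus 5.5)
The plan is the Lebeau--Robbiano strategy adapted to the Stokes operator: lift the low-frequency sum to an elliptic problem in one extra variable, then apply a quantitative unique continuation (interpolation) estimate. Fix $\lambda>0$ and $y\in H$, and write $\mathbb P_Ny=\sum_{j\le N}a_je_j$. Set
\[
w(x,s):=\sum_{j=1}^N a_j\,\frac{\sinh(\sqrt{\tau_j}\,s)}{\sqrt{\tau_j}}\,e_j(x),\qquad
q(x,s):=\sum_{j=1}^N a_j\,\frac{\sinh(\sqrt{\tau_j}\,s)}{\sqrt{\tau_j}}\,\pi_j(x),
\]
for $(x,s)\in Z:=\Omega\times(-S,S)$, with $S>0$ fixed. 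From the eigenvalue problem for $(e_j,\pi_j)$, the pair $(w,q)$ solves the stationary Stokes-type system
\[
-\partial_s^2w-\Delta_xw+\nabla_xq=0,\qquad \nabla_x\cdot w=0 \text{ in } Z,\qquad w=0 \text{ on } \partial\Omega\times(-S,S).
\]
It also satisfies $w(\cdot,0)=0$ and $\partial_sw(\cdot,0)=\mathbb P_Ny$.

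The core step is an interpolation inequality of the form
\[
\|w\|_{H^1(\Omega\times(-S/2,S/2))}\le C\,\|w\|_{H^1(Z)}^{1-\delta}\,\|\partial_sw(\cdot,0)\|_{\mathbf L^2(\omega)}^{\delta},
\]
for some $\delta\in(0,1)$ and $C>0$ depending only on $(\Omega,\omega,S)$. I would prove it from local Carleman estimates, propagated from $\omega\times\{0\}$ to all of $\overline\Omega$ by a finite chain of balls.

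Taking the divergence of the system gives $\Delta_xq=0$, and hence $\Delta_x(\partial_s^2+\Delta_x)w=0$ away from the boundary. In the interior I would therefore use a Carleman estimate for this product of elliptic operators, and the pressure disappears. The Cauchy data on $\{s=0\}$ enter through $w(\cdot,0)=0$ and $\partial_sw(\cdot,0)$ restricted to $\omega$.

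Once the interpolation inequality is available, the rest is bookkeeping with explicit sums. By orthonormality, the right-hand norm satisfies $\|w\|_{H^1(Z)}^2\le C e^{2S\sqrt{\lambda}}(1+\lambda)\|\mathbb P_Ny\|_H^2$. The left-hand norm is at least a constant times $\|\mathbb P_Ny\|_H^2$, by integrating $|\sinh(\sqrt{\tau_j}s)/\sqrt{\tau_j}|^2\gtrsim 1$ over $s\in(S/4,S/2)$ (using $\tau_j\ge\tau_1>0$). Combining the two bounds gives
\[
\|\mathbb P_Ny\|_H\le Ce^{C\sqrt\lambda}\,\|\mathbb P_Ny\|_{\mathbf L^2(\omega)},
\]
which is \eqref{lebeau_chaves} after squaring and enlarging $C_1$.

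The main obstacle is the boundary part of the Carleman chain near $\partial\Omega\times(-S,S)$. There only $w=0$ is known, while the pressure has no boundary condition, so the reduction to $\Delta_x(\partial_s^2+\Delta_x)w=0$ produces uncontrolled boundary terms for $q$ and $\nabla q$. To handle this, I would first try a Carleman estimate for the Stokes system itself with a weight whose normal derivative has a favorable sign on $\partial\Omega$. I would estimate the pressure separately through $\Delta_xq=0$ together with $H^{-1}$-type bounds on $\nabla q$ obtained from the equation (in the spirit of Imanuvilov's and Fernández-Cara et al.'s Stokes Carleman estimates). The aim is that only interior observations of $w$ survive, all with constants independent of $N$.
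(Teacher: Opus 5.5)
The paper gives no proof of this statement; it is quoted from \cite{chaves2016spectral}. Your lifting $(w,q)$ and the traces $w(\cdot,0)=0$, $\partial_sw(\cdot,0)=\mathbb P_Ny$ are correct. So is the closing bookkeeping: the $e^{C\sqrt\lambda}$ upper bound on $\|w\|_{H^1(Z)}$, the lower bound from $\sinh(\sqrt{\tau_j}s)/\sqrt{\tau_j}\geq s$, and the resulting $\|\mathbb P_Ny\|_H\leq Ce^{C\sqrt\lambda/\delta}\|\mathbb P_Ny\|_{\mathbf{L^2}(\omega)}$.

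The gap is the interpolation inequality, and it is not confined to $\partial\Omega$: your interior step fails. The operator $\Delta_x$ is not elliptic in the variables $(x,s)$, and neither is the augmented system. Its Douglis--Nirenberg principal symbol has determinant $\pm(\sigma^2+|\xi|^2)^{n-1}|\xi|^2$, which vanishes at $(\sigma,\xi)=(1,0)$, the conormal of $\{s=0\}$.

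Concretely, let $g$ be harmonic in a ball $B\subset\Omega$. Then $w=s^3\nabla_xg(x)$ and $q=6s\,g(x)$ satisfy $-\partial_s^2w-\Delta_xw+\nabla_xq=0$ and $\nabla_x\cdot w=0$ in $B\times\mathbb R$, hence also $\Delta_x(\partial_s^2+\Delta_x)w=0$. They have $w(\cdot,0)=\partial_sw(\cdot,0)=0$ and even $q(\cdot,0)=0$, yet $w\neq0$. So no local Carleman estimate near $\omega\times\{0\}$ can recover $w$ from its Cauchy data there. Replacing $s^3$ by a cutoff $\chi(s)$ supported in $\{s>S/4\}$ shows that three-ball inequalities also fail in the $s$-direction. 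Your chain-of-balls scheme therefore breaks at its first link, however the pressure is treated near the boundary.

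These potential-flow solutions are ruled out only by the no-slip condition on $\partial\Omega\times(-S,S)$: a harmonic $g$ on $\Omega$ with $\nabla g=0$ on $\partial\Omega$ is constant. So the boundary condition must be used in an essential, global way, not as the last step of an interior propagation. What does propagate in the interior is the vorticity, since $\Delta_{x,s}(\partial_iw_j-\partial_jw_i)=\partial_i\partial_jq-\partial_j\partial_iq=0$. The classical Lebeau--Robbiano machinery therefore applies to $\partial_iw_j-\partial_jw_i$. Recovering $w$ itself, i.e. controlling its potential part, requires exactly the boundary analysis you leave open. Handling this degeneracy is the real content of the cited theorem. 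As it stands, your proposal reduces the statement to an interpolation inequality whose proposed local proof cannot work.
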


\subsection*{Trilinear form}
Throughout this subsection, we assume that $n=2$ or $n=3$, with the dimension specified in each estimate. Let $b\in\mathcal L^3(\mathbf{H_0^1}(\Omega);\mathbb R)$ be defined by
\begin{equation}\label{eq:def-b}
b(u,v,w):=\sum_{i,j=1}^{n}\int_{\Omega}u_i\,\partial_i v_j\,w_j\,dx=\int_\Omega (u\cdot\nabla)v\cdot w\,dx.
\end{equation}
This trilinear form is skew-symmetric in its last two variables, that is, 
\[b(u,v,w)=-b(u,w,v), \quad  u\in V, \quad  v,w\in \mathbf{H_0^1}(\Omega), \]
and hence 
\begin{equation}\label{eq:cancel-b} 
b(u,v,v)=0, \quad  \quad  u\in V,\quad  v\in \mathbf{H_0^1}(\Omega).
\end{equation}
For $u,v\in V$, we define $B(u,v)\in V'$ by
\[\langle B(u,v),w\rangle_{V',V}=b(u,v,w), \quad w\in V,\]
and write $B(u):=B(u,u)$. We shall use the following classical estimates; for $n=2$ see \cite[Chapter~III, Lemma~3.4 and Lemma~3.8]{temam84}, and for $n=3$ see \cite[Chapter~III, Lemma~3.8, inequality~(3.148)]{temam84}
\begin{lemma}\label{lem:trilinear-estimates}
Let $\Omega$ be a bounded Lipschitz open set in $\mathbb R^2$. Then, for every $u,v,w\in V$,
\begin{align}
|b(u,v,w)| \le & 2^{1/2} \|u\|_H^{1/2}\|u\|_V^{1/2}\|v\|_V\|w\|_H^{1/2} \|w\|_V^{1/2},\label{ladyzhenskaya2d}\\
|b(u,u,v)| \leq & 2^{1/2}\|u\|_H \|u\|_V\|v\|_V,\label{3.56}\\
\|B(u)\|_{V'}\leq & 2^{1/2} \|u\|_H \|u\|_V.\label{estimate:B}
\end{align}
\end{lemma}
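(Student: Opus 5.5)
The three estimates are classical, and I would derive all of them from Ladyzhenskaya's inequality in dimension two. For $\varphi\in H_0^1(\Omega)$, extended by zero to $\mathbb R^2$, we have
\[\|\varphi\|_{L^4}^2\le 2^{1/2}\|\varphi\|_{L^2}\|\nabla\varphi\|_{L^2}.\]
This extension by zero is legitimate for a bounded Lipschitz set. The inequality itself comes from writing $\varphi^2(x)\le 2\int|\varphi\,\partial_1\varphi|\,dx_1$, doing the same in $x_2$, multiplying, and applying Fubini and Cauchy--Schwarz. The constant in this form is $2^{1/2}$, since $\|\varphi\|_{L^4}^4\le 2\|\varphi\|_{L^2}^2\|\nabla\varphi\|_{L^2}^2$ with the gradient norms combined.

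For vector fields $u\in V$, I would apply this componentwise and sum. The goal is
\[\|u\|_{\mathbf{L^4}}^2\le 2^{1/2}\|u\|_H\|u\|_V,\]
and I would check that summing over components (via Cauchy--Schwarz on sums) does not degrade the constant. I expect the main obstacle to be exactly this constant bookkeeping. To stay safe, I would use the pointwise $L^4$ norm of the vector $|u|$ together with $|\nabla|u||\le|\nabla u|$ and apply the scalar inequality to $|u|\in H_0^1$, which gives the vector inequality directly with the same $2^{1/2}$.

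Inequality \eqref{ladyzhenskaya2d} then follows from Hölder with exponents $(4,2,4)$:
\[|b(u,v,w)|\le\|u\|_{\mathbf{L^4}}\|\nabla v\|_{\mathbf{L^2}}\|w\|_{\mathbf{L^4}}.\]
Inserting the $L^4$ bound for both $u$ and $w$ gives the constant $(2^{1/2})^{1/2}(2^{1/2})^{1/2}=2^{1/2}$.

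For \eqref{3.56}, I would first use skew-symmetry \eqref{eq:cancel-b} to write $b(u,u,v)=-b(u,v,u)$. Hölder then gives
\[|b(u,v,u)|\le\|u\|_{\mathbf{L^4}}^2\|v\|_V\le 2^{1/2}\|u\|_H\|u\|_V\|v\|_V.\]
Finally, \eqref{estimate:B} follows by taking the supremum over $\|v\|_V\le1$ in \eqref{3.56}, using the definition $\langle B(u),v\rangle_{V',V}=b(u,u,v)$.
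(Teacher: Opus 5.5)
Your argument is correct and is essentially the classical proof the paper relies on when it cites Temam (Chapter~III, Lemmas~3.3--3.4 and~3.8): Ladyzhenskaya's inequality $\|\varphi\|_{L^4}\le 2^{1/4}\|\varphi\|_{L^2}^{1/2}\|\nabla\varphi\|_{L^2}^{1/2}$ on $H_0^1$, H\"older with exponents $(4,2,4)$ for the first bound, skew-symmetry for the second, and duality for the third. The only cosmetic difference is that you pass to vector fields through the scalar function $|u|$ (using $|\nabla|u||\le|\nabla u|$), whereas Temam applies the scalar inequality componentwise and sums with Cauchy--Schwarz; both routes give the same constant $2^{1/2}$.
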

\begin{lemma}
Let $\Omega\subset\mathbb R^2$ be a bounded open set of class $C^2$. There exists a constant $C(\Omega)>0$ such that, for every
$u\in V\cap\mathbf{H^2}(\Omega)$, we have $B(u)\in H$ and
\begin{equation}\label{eq:B-2d}
\|B(u)\|_H\leq C(\Omega)\|u\|_H^{1/2}\|u\|_V\|Su\|_H^{1/2}.
\end{equation}
\end{lemma}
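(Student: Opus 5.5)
The plan is to write $B(u)$ as the Leray projection of the convective term, $B(u)=P\big((u\cdot\nabla)u\big)$, and then bound the convective term in $\mathbf{L^2}(\Omega)$. We have $u\in V\cap\mathbf{H^2}(\Omega)$, and in dimension two $H^2\subset L^\infty$, so $(u\cdot\nabla)u\in\mathbf{L^2}(\Omega)$. For every $w\in V$ we get $\langle B(u),w\rangle_{V',V}=b(u,u,w)=((u\cdot\nabla)u,w)_{\mathbf{L^2}(\Omega)}=(P((u\cdot\nabla)u),w)_H$. By density of $V$ in $H$, this identifies $B(u)$ with an element of $H$. Since $P$ is an orthogonal projection, it also gives
\[
\|B(u)\|_H\le\|(u\cdot\nabla)u\|_{\mathbf{L^2}(\Omega)}.
\]

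Next, I apply Hölder's inequality:
\[
\|(u\cdot\nabla)u\|_{\mathbf{L^2}}\le\|u\|_{\mathbf{L^4}}\|\nabla u\|_{\mathbf{L^4}}.
\]
Both factors are then handled by Ladyzhenskaya's inequality in 2D. For the first, $\|u\|_{L^4}\le 2^{1/4}\|u\|_{L^2}^{1/2}\|\nabla u\|_{L^2}^{1/2}$, valid for $u\in H^1_0$, which gives $\|u\|_{\mathbf L^4}\lesssim\|u\|_H^{1/2}\|u\|_V^{1/2}$. For the second, $\nabla u$ need not vanish on $\partial\Omega$, so I use the Gagliardo--Nirenberg inequality on the bounded $C^2$ domain:
\[
\|\nabla u\|_{L^4}\le C\|\nabla u\|_{L^2}^{1/2}\|\nabla u\|_{H^1}^{1/2}.
\]
Combining the two bounds yields
\[
\|B(u)\|_H\le C\|u\|_H^{1/2}\|u\|_V\,\|u\|_{\mathbf{H^2}}^{1/2}.
\]

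The last step replaces $\|u\|_{\mathbf{H^2}}$ by $\|Su\|_H$. This uses the regularity theory of the Stokes problem (Cattabriga, or Temam, Chapter I, Proposition 2.2). For a $C^2$ domain and $u\in D(S)$ it gives $\|u\|_{\mathbf{H^2}(\Omega)}\le C(\Omega)\|Su\|_H$: one solves $-\Delta u+\nabla p=Su$ with $u\in V$. Substituting this into the previous bound gives \eqref{eq:B-2d}.

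I expect the main obstacle to be the correct invocation of the $H^2$ Stokes regularity estimate and the Gagliardo--Nirenberg inequality on a bounded domain rather than on $\mathbb R^2$. In particular, since $\nabla u$ does not vanish on the boundary, one needs an extension operator or the bounded-domain form of Gagliardo--Nirenberg, whose constant depends on $\Omega$. Alternatively, one can use the $C^2$ regularity to get the equivalence $\|u\|_{\mathbf{H^2}}\simeq\|Su\|_H$ and $\|\nabla u\|_{\mathbf{H^1}}\lesssim\|Su\|_H$ directly. Either route is the source of the constant $C(\Omega)$ in \eqref{eq:B-2d}.
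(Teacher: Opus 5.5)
Your proof is correct and complete. The paper does not prove this lemma itself; it quotes it as a classical estimate from Temam's book (Chapter III, Lemma 3.8). Your argument therefore supplies a self-contained justification rather than reproducing one given in the text.

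Each step holds as stated:
\begin{itemize}
\item the identification $B(u)=P\bigl((u\cdot\nabla)u\bigr)\in H$ via density of $V$ in $H$;
\item the $\mathbf{L^4}\times\mathbf{L^4}$ H\"older split;
\item Ladyzhenskaya's inequality for $u\in\mathbf{H_0^1}(\Omega)$;
\item the bounded-domain Gagliardo--Nirenberg inequality $\|\phi\|_{L^4}\le C\|\phi\|_{L^2}^{1/2}\|\phi\|_{H^1}^{1/2}$ applied to $\phi=\partial_iu_j$, with the full $H^1$ norm, which is bounded by $\|u\|_{\mathbf{H^2}(\Omega)}$;
\item the Cattabriga estimate $\|u\|_{\mathbf{H^2}(\Omega)}\le C(\Omega)\|Su\|_H$ on $D(S)$.
\end{itemize}
The exponents combine exactly to the claimed bound.

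The other common route splits instead as $\|(u\cdot\nabla)u\|_{\mathbf{L^2}(\Omega)}\le\|u\|_{L^\infty(\Omega)}\|u\|_V$ and uses Agmon's two-dimensional inequality $\|u\|_{L^\infty}\le C\|u\|_{L^2}^{1/2}\|u\|_{H^2}^{1/2}$. That needs a single interpolation step and never has to interpolate $\nabla u$, which does not vanish on $\partial\Omega$. It also adapts verbatim to the three-dimensional estimate \eqref{eq:B-3d} via $\|u\|_{L^\infty}\le C\|u\|_{H^1}^{1/2}\|u\|_{H^2}^{1/2}$. In three dimensions, by contrast, your $\mathbf{L^4}\times\mathbf{L^4}$ split only yields the weaker bound $\|u\|_H^{1/4}\|u\|_V\|Su\|_H^{3/4}$ and would have to be replaced by an $\mathbf{L^6}\times\mathbf{L^3}$ split. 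Your route, on the other hand, uses only $L^4$-type interpolation.

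In either approach the $C^2$ hypothesis on $\partial\Omega$ is needed only for the Stokes $H^2$ estimate, which is the source of the constant $C(\Omega)$.
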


\begin{lemma}\label{lem:B-3d}
Let $\Omega\subset\mathbb R^3$ be a bounded open set of class $C^2$. There exists $C(\Omega)>0$ such that, for every $u\in D(S)$, we have $B(u)\in H$ and
\begin{equation}\label{eq:B-3d}
\|B(u)\|_H\leq C(\Omega)\|u\|_V^{3/2}\|Su\|_H^{1/2}.
\end{equation}
\end{lemma}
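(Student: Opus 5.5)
We prove \eqref{eq:B-3d} by writing $B(u)$ as a pointwise product, applying H\"older's inequality, and then combining Sobolev embedding, Gagliardo--Nirenberg interpolation and elliptic regularity for the Stokes operator.

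\emph{Step 1 (the product lies in $\mathbf{L^2}$).} For $u\in D(S)=\mathbf{H^2}(\Omega)\cap V$, set $f:=(u\cdot\nabla)u$, so that $\langle B(u),w\rangle_{V',V}=\int_\Omega f\cdot w\,dx$. By H\"older's inequality with exponents $(6,3,2)$,
\[\|f\|_{\mathbf{L^2}(\Omega)}\leq \|u\|_{\mathbf{L^6}(\Omega)}\|\nabla u\|_{\mathbf{L^3}(\Omega)}.\]
Here $u\in\mathbf{L^6}$ and $\nabla u\in \mathbf{H^1}\subset\mathbf{L^6}$ since $n=3$, so $f\in\mathbf{L^2}(\Omega)$. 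The functional $w\mapsto\int_\Omega f\cdot w\,dx$ on $V$ is therefore represented by the $H$-element $Pf$. In this sense $B(u)=Pf\in H$, with $\|B(u)\|_H\leq\|f\|_{\mathbf{L^2}(\Omega)}$ because $P$ is an orthogonal projection.

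\emph{Step 2 (the $\mathbf{L^6}$ factor).} Since $u\in\mathbf{H_0^1}(\Omega)$, the Sobolev embedding together with the Poincar\'e inequality gives $\|u\|_{\mathbf{L^6}}\leq C\|\nabla u\|_{\mathbf{L^2}}=C\|u\|_V$.

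\emph{Step 3 (the $\mathbf{L^3}$ factor).} We interpolate $\nabla u$ between $\mathbf{L^2}$ and $\mathbf{H^1}$. In three dimensions, for $g\in H^1(\Omega)$,
\[\|g\|_{L^3}\leq C\|g\|_{L^2}^{1/2}\|g\|_{H^1}^{1/2}.\]
This follows from $\|g\|_{L^3}\leq\|g\|_{L^2}^{1/2}\|g\|_{L^6}^{1/2}$ and the embedding $H^1\subset L^6$, which holds on bounded $C^2$ domains via an extension operator. Applied to $g=\nabla u$, it yields $\|\nabla u\|_{\mathbf{L^3}}\leq C\|u\|_V^{1/2}\|u\|_{\mathbf{H^2}}^{1/2}$.

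\emph{Step 4 (elliptic regularity).} The only genuinely nontrivial input, and the main obstacle, is the Cattabriga/Agmon--Douglis--Nirenberg regularity estimate for the stationary Stokes problem on a $C^2$ domain. For $u\in D(S)$ it gives
\[\|u\|_{\mathbf{H^2}(\Omega)}\leq C(\Omega)\|Su\|_H,\]
whose proof uses the $C^2$ regularity of $\partial\Omega$; see \cite{temam84}, Chapter~I, Proposition~2.2. We take this estimate as known.

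\emph{Conclusion.} Combining Steps 1--4,
\[\|B(u)\|_H\leq C\|u\|_V\cdot\|u\|_V^{1/2}\|Su\|_H^{1/2}=C(\Omega)\|u\|_V^{3/2}\|Su\|_H^{1/2},\]
which is \eqref{eq:B-3d}.
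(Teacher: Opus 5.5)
Your proof is correct and is essentially the standard argument. The paper gives no proof of its own; it cites Temam (Chapter III, Lemma 3.8, inequality (3.148)), whose proof is this same chain: H\"older with exponents $(6,3,2)$, the Sobolev embedding, the $L^2$--$L^6$ interpolation for $\nabla u$, and Cattabriga's estimate $\|u\|_{\mathbf{H^2}(\Omega)}\leq C\|Su\|_H$. Your identification of $B(u)$ with $P((u\cdot\nabla)u)$ under $H\subset V'$ correctly gives meaning to the claim $B(u)\in H$.
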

We also need the following basic estimate for the convective term in the closed-loop system.

\begin{lemma}\label{lem:finite-projection-convection}
Let $n=2$ or $n=3$. For every $\lambda>0$, there exists $M_\lambda\geq0$ such that
\[
\|\nabla\mathbb P_Nv\|_{L^\infty(\Omega)}\leq M_\lambda\|v\|_H,
\qquad v\in H.
\]
Consequently, setting $a_\lambda:=(\mu_\lambda-1)M_\lambda$, we have
\begin{equation}\label{eq:finite-projection-convection}
|\langle B(y),\mathscr C_\lambda y\rangle_{V',V}|
\leq a_\lambda\|y\|_H^3,\qquad y\in V.
\end{equation}
\end{lemma}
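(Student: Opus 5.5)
The plan is to first establish the $L^\infty$ bound on $\nabla\mathbb P_N v$ by a finite-dimensional argument, and then derive \eqref{eq:finite-projection-convection} by splitting $\mathscr C_\lambda y$ so that the cancellation \eqref{eq:cancel-b} removes the non-smooth part.

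\textbf{The $L^\infty$ bound.} Since $N=N(\lambda)$ is finite, $H_N=\operatorname{Vect}\{e_j\}_{j=1}^N$ is finite dimensional. Each $e_j$ solves the Stokes eigenproblem with smooth boundary, so by elliptic regularity for the Stokes system (bootstrapping $e_j\in D(S^k)$ for all $k$) we get $e_j\in C^\infty(\overline\Omega)^n$. In particular $M_j:=\|\nabla e_j\|_{L^\infty(\Omega)}<\infty$. For $v\in H$, the triangle inequality and Cauchy--Schwarz give
\[
\|\nabla\mathbb P_Nv\|_{L^\infty}\le\sum_{j=1}^N|v_j|M_j\le\Big(\sum_{j=1}^N M_j^2\Big)^{1/2}\|v\|_H .
\]
We therefore set $M_\lambda:=(\sum_{j\le N}M_j^2)^{1/2}$, with $M_\lambda=0$ if $N=0$.

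\textbf{The convective estimate.} For $y\in V$ we write $\mathscr C_\lambda y=y+(\mu_\lambda-1)\mathbb P_Ny$. Then
\[
\langle B(y),\mathscr C_\lambda y\rangle=b(y,y,y)+(\mu_\lambda-1)\,b(y,y,\mathbb P_Ny).
\]
The first term vanishes by \eqref{eq:cancel-b}. For the second, skew-symmetry gives $b(y,y,\mathbb P_Ny)=-b(y,\mathbb P_Ny,y)$, since $y\in V$ and $\mathbb P_Ny\in H_N\subset V$. Hence
\[
|b(y,\mathbb P_Ny,y)|\le\int_\Omega|y|^2|\nabla\mathbb P_Ny|\,dx\le\|\nabla\mathbb P_Ny\|_{L^\infty}\|y\|_H^2\le M_\lambda\|y\|_H^3 .
\]
Since $\mu_\lambda>1$, this yields the bound with $a_\lambda=(\mu_\lambda-1)M_\lambda$.

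\textbf{Main obstacle.} The only delicate point is the regularity $e_j\in W^{1,\infty}$. One could avoid full smoothness by using $e_j\in D(S)\subset \mathbf H^2$ together with the $\mathbf H^3$ regularity of the Stokes problem (right-hand side $\tau_je_j\in\mathbf H^1$) and the Sobolev embedding $\mathbf H^3\subset W^{1,\infty}$, valid for $n\le3$. This uses the smoothness of $\partial\Omega$ assumed at the outset.
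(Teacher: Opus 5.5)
Your proposal is correct and follows essentially the same route as the paper: the same constant $M_\lambda=\bigl(\sum_{j\le N}\|\nabla e_j\|_{L^\infty(\Omega)}^2\bigr)^{1/2}$, obtained by Cauchy--Schwarz from the regularity of the Stokes eigenfunctions, and the same reduction $\langle B(y),\mathscr C_\lambda y\rangle_{V',V}=-(\mu_\lambda-1)\,b(y,\mathbb P_Ny,y)$ via cancellation and skew-symmetry. Your side remark that $\mathbf{H^3}$ regularity of the Stokes problem together with $\mathbf{H^3}\subset W^{1,\infty}$ for $n\le 3$ already suffices is a valid, slightly more economical justification of the regularity that the paper simply asserts (full smoothness up to the boundary).
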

\begin{proof}
The Stokes eigenfunctions are smooth up to the boundary under our assumptions on $\Omega$. Thus we may take
\[
M_\lambda=\left(\sum_{j=1}^{N}\|\nabla e_j\|_{L^\infty(\Omega)}^2\right)^{1/2},
\]
with the value zero if $N=0$. The first assertion follows by Cauchy--Schwarz. By cancellation and skew-symmetry,
\[
|\langle B(y),\mathscr C_\lambda y\rangle_{V',V}|
=(\mu_\lambda-1)|b(y,\mathbb P_Ny,y)|
\leq(\mu_\lambda-1)\|\nabla\mathbb P_Ny\|_{L^\infty(\Omega)}\|y\|_H^2.
\]
\end{proof}
\subsection*{Notion of solutions}
In the literature on the Navier--Stokes equations, several notions of solution are used, depending on the analytical framework and the regularity of the initial data and the forcing term. In particular, the term \emph{strong solution} may refer to different regularity classes. To distinguish the notions used in this paper, we first recall the definition of a strong solution for an abstract differential inclusion; see \cite[Chapter~IV]{s1997} and \cite[Definition~4.1]{barbu2010}. Throughout this paper, we refer
to this notion as an \emph{abstract strong solution}.

\begin{defi}\label{def:abstract-strong-solution}
Let $X$ be a Hilbert space,
$\mathcal A:D(\mathcal A)\subset X\to 2^X$,
$T>0$, $y_0\in X$, and $f\in L^1(0,T;X)$.
A function $y\in C([0,T];X)$ is called an
\emph{abstract strong solution} of
$y'+\mathcal Ay\ni f$ with initial datum $y_0$ if
$y(0)=y_0$, $y\in W^{1,1}(\varepsilon,T;X)$ for every $\varepsilon\in(0,T)$, and
\[ y(t)\in D(\mathcal A), \qquad y'(t)+\mathcal Ay(t)\ni f(t)
\quad\text{in }X,\quad\text{for a.e. }t\in(0,T)\]
\end{defi}

For the same abstract inclusion, we also introduce the notion
of an abstract generalized solution.

\begin{defi}\label{def:abstract-generalized-solution}
Let $X$, $\mathcal A$, $T$, $y_0$, and $f$ be as in \Cref{def:abstract-strong-solution}. A function $y\in C([0,T];X)$ is called an
\emph{abstract generalized solution} of $y'+\mathcal Ay\ni f$ with initial datum $y_0$ if there exist $y_0^k\in X$, $f_k\in L^1(0,T;X)$, and corresponding abstract strong solutions $y_k$ of $y_k'+\mathcal Ay_k\ni f_k$, $y_k(0)=y_0^k$,
such that
\[y_0^k\to y_0\quad\text{in }X,\qquad f_k\to f\quad\text{in }L^1(0,T;X),\qquad y_k\to y\quad\text{in }C([0,T];X).\]
\end{defi}
We now specify the notion of strong solution used for the closed-loop equations. The parameters $\gamma_\lambda>0$ and $\mu_\lambda>1$ are fixed in \Cref{sec:sta-Stokes}, where the feedback law \eqref{eq:feedback-Stokes} is constructed. Functions defined on $\omega$ are extended by zero when
multiplied by $\mathds1_\omega$.

\begin{defi}\label{def:strong-solution}
Let $T>0$, $y_0\in V$, and let $d$ satisfy \textup{\ref{A_1}--\ref{A_2}}.
A function $y$ is a \emph{strong solution} of the closed-loop Stokes/Navier--Stokes system {\color{blue}if} $y\in H^1(0,T;H)\cap L^2(0,T;D(S))\cap C([0,T];V)$, $y(0)=y_0$, and there exists a measurable $\eta\in L^\infty(0,T;\mathbf{L^2}(\Omega))$ such that
\[ \eta(t)\in\sign_{\mathbf{L^2}(\Omega)} (\mathds1_\omega\mathscr C_\lambda y(t)) \quad\text{for a.e. }t\in(0,T), \]
and for Stokes system
\[y'+Sy+\gamma_\lambda P(\mathds1_\omega\mathbb P_Ny)+DP(\mathds1_\omega\eta)=P(\mathds1_\omega d)\quad\text{in }L^2(0,T;H),\]
and for Navier--Stokes system
\[y'+Sy+B(y,y)+\gamma_\lambda P(\mathds1_\omega\mathbb P_Ny)+DP(\mathds1_\omega\eta)=P(\mathds1_\omega d)\quad\text{in }L^2(0,T;H).\]
A global strong solution satisfies this definition on every
finite interval $[0,T]$. 
\end{defi}
We recall that uniqueness always refers to the velocity $y$.  Every strong solution in \Cref{def:strong-solution} is also an abstract strong solution of the corresponding projected differential inclusion. For this regularity, the energy identities
\[\frac12\frac{d}{dt}\|y(t)\|_\mu^2=(y'(t),\mathscr C_\lambda y(t))_H, \qquad \frac12\frac{d}{dt}\|y(t)\|_V^2=(y'(t),Sy(t))_H\]
hold almost everywhere, with integrable right-hand sides. In particular, the tests used below with $\mathscr C_\lambda y$ and $Sy$ are justified for strong solutions. For more regular data, we shall additionally obtain $y\in W^{1,\infty}(0,T;H)\cap L^\infty(0,T;D(S))$.\medskip

During our analysis, we shall also consider truncated systems, which are used to construct global solutions in two and three
dimensions under the assumptions of our main results. For $\delta>0$, we define the radial truncation $R_\delta:V\to V$ by
\[R_\delta y:=
\begin{cases}
y, & \|y\|_V\leq\delta,\\[1mm]
\dfrac{\delta}{\|y\|_V}y, & \|y\|_V>\delta.
\end{cases}\]
We then define $B_\delta:V\to V'$ by
\begin{equation}\label{eq:definition_B_delta}
B_\delta(y):=B(R_\delta y,R_\delta y)=
\begin{cases}
B(y,y), & \|y\|_V\leq\delta,\\[1mm]
\left(\dfrac{\delta}{\|y\|_V}\right)^2 B(y,y),
& \|y\|_V>\delta.
\end{cases}
\end{equation}
The truncated closed-loop system is obtained by replacing $B(y,y)$ with $B_\delta(y)$ in the Navier--Stokes equation.\medskip

Finally, we introduce generalized solutions for the Stokes and two-dimensional Navier--Stokes systems by approximation with strong solutions corresponding to regular data.

\begin{defi}\label{def:generalized-solution}
Let $T>0$, $y_0\in H$, and let $d$ satisfy
\textup{\ref{A_1}--\ref{A_2}}.
A function $y\in C([0,T];H)$ is called a \emph{generalized solution} of the closed-loop Stokes system (respectively, the two-dimensional Navier--Stokes system) with data $(y_0,d)$ {\color{blue}if} there exist $y_0^k\in D(S)$, $d^k\in W^{1,1}(0,T;\mathbf{L^2}(\omega))$, with $\|d^k(t)\|_{\mathbf{L^2}(\omega)}\leq D$ for a.e. $t\in(0,T)$, and corresponding strong solutions $y^k$ of the same closed-loop system, with the same feedback law $G_\lambda$, such that
\[y_0^k\to y_0\quad\text{in }H,\qquad d^k\to d\quad\text{in }L^1(0,T;\mathbf{L^2}(\omega)), \qquad y^k\to y\quad\text{in }C([0,T];H).\]
A global generalized solution satisfies this definition
on every finite interval $[0,T]$.
\end{defi}

%-------------
\section{A priori stabilization estimates for the Stokes system}
\label{sec:sta-Stokes}

In this section, we construct the feedback control law and introduce a Lyapunov function designed for stability analysis.
\begin{prop}\label{prop:stabStokes}
Let $\lambda>0$ and assume that $d$ satisfies \textup{\ref{A_1}--\ref{A_2}}. There exists a constant $C_\lambda\geq1$ such that, for every $T>0$, every strong solution $y$ of \eqref{eq:closed_loop_Stokes}, with initial datum $y(0)=y_0$ and the feedback law $G_\lambda$ defined in \eqref{eq:feedback-Stokes}, satisfies the following a priori estimate:
\begin{equation}\label{eq:fin_Stabilization-Stokes}
\|y(t)\|_H\leq C_\lambda e^{-\frac{\lambda}{4}t}\|y_0\|_H,
\qquad t\in[0,T].
\end{equation}
\end{prop}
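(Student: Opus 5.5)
The plan is a Lyapunov argument with $V(t)=\|y(t)\|_\mu^2=(y,\mathscr C_\lambda y)_H$, using the feedback suggested by \Cref{def:strong-solution}:
\[
G_\lambda(y)=-\gamma_\lambda\mathbb P_Ny-D\sign_{\mathbf{L^2}(\Omega)}(\mathds1_\omega\mathscr C_\lambda y),
\]
with $\mu_\lambda=C_1e^{C_1\sqrt\lambda}$ and $\gamma_\lambda$ to be fixed. For a strong solution, the energy identity $\tfrac12\tfrac{d}{dt}\|y\|_\mu^2=(y',\mathscr C_\lambda y)_H$ holds a.e. We test the equation with $\mathscr C_\lambda y\in V$. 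Since $P$ is self-adjoint and $\mathscr C_\lambda y\in H$, we have $(P(\mathds1_\omega f),\mathscr C_\lambda y)_H=(f,\mathds1_\omega\mathscr C_\lambda y)_{\mathbf{L^2}(\Omega)}$.

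\textbf{Disturbance.} The sign term and the disturbance together contribute
\[
-D\|\mathds1_\omega\mathscr C_\lambda y\|_{\mathbf L^2(\Omega)}+(d,\mathscr C_\lambda y)_{\mathbf L^2(\omega)}\le(-D+\|d\|_{\mathbf L^2(\omega)})\|\mathscr C_\lambda y\|_{\mathbf L^2(\omega)}\le0.
\]
This uses the property $(\eta,f)=\|f\|$ of the sign operator and \ref{A_2}. So the disturbance is rejected exactly, and what remains is the nominal system.

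\textbf{Stokes term.} Since $\mathscr C_\lambda$ commutes with $S$,
\[
(Sy,\mathscr C_\lambda y)=\mu_\lambda\sum_{j\le N}\tau_j|y_j|^2+\sum_{j>N}\tau_j|y_j|^2\ge\lambda\|\mathbb P_N^\perp y\|_H^2 .
\]

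\textbf{Feedback term (main step).} Write
\[
\gamma_\lambda(\mathbb P_Ny,\mathscr C_\lambda y)_{\mathbf L^2(\omega)}=\gamma_\lambda\mu_\lambda\|\mathbb P_Ny\|_{\mathbf L^2(\omega)}^2+\gamma_\lambda(\mathbb P_Ny,\mathbb P_N^\perp y)_{\mathbf L^2(\omega)} .
\]
Bound the cross term by Young:
\[
|\gamma_\lambda(\mathbb P_Ny,\mathbb P_N^\perp y)_{\mathbf L^2(\omega)}|\le\tfrac{\gamma_\lambda\mu_\lambda}{2}\|\mathbb P_Ny\|_{\mathbf L^2(\omega)}^2+\tfrac{\gamma_\lambda}{2\mu_\lambda}\|\mathbb P_N^\perp y\|_H^2 .
\]
Then apply the spectral inequality \eqref{lebeau_chaves}, $\|\mathbb P_Ny\|_{\mathbf L^2(\omega)}^2\ge\mu_\lambda^{-1}\|\mathbb P_Ny\|_H^2$. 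The feedback term is therefore at least
\[
\tfrac{\gamma_\lambda}{2}\|\mathbb P_Ny\|_H^2-\tfrac{\gamma_\lambda}{2\mu_\lambda}\|\mathbb P_N^\perp y\|_H^2 .
\]

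\textbf{Choice of $\gamma_\lambda$ and conclusion.} Choose $\gamma_\lambda=\lambda\mu_\lambda$. Then
\[
\tfrac12\tfrac{d}{dt}\|y\|_\mu^2\le-\tfrac{\lambda\mu_\lambda}{2}\|\mathbb P_Ny\|_H^2-\tfrac{\lambda}{2}\|\mathbb P_N^\perp y\|_H^2=-\tfrac\lambda2\|y\|_\mu^2 ,
\]
which gives $\|y(t)\|_\mu^2\le e^{-\lambda t}\|y_0\|_\mu^2$. By the norm equivalence $\|u\|_H^2\le\|u\|_\mu^2\le\mu_\lambda\|u\|_H^2$,
\[
\|y(t)\|_H\le\sqrt{\mu_\lambda}\,e^{-\lambda t/2}\|y_0\|_H\le C_\lambda e^{-\lambda t/4}\|y_0\|_H .
\]
The stated rate $\lambda/4$ leaves slack, so cruder constants in the Young step are harmless.

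The main obstacle is the feedback-term step: the cross term between low and high frequencies on $\omega$. It has to be absorbed using only the Stokes dissipation $\lambda\|\mathbb P_N^\perp y\|^2$ available on high frequencies. This is exactly why the weight $\mu_\lambda$ is taken equal to the spectral-inequality constant.
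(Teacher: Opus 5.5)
Your argument is correct. Its skeleton is the paper's: the weighted functional $\|y\|_\mu^2$, exact cancellation of $d$ by the sign term, the spectral inequality on the low modes, and Young on the cross term $\gamma_\lambda(\mathbb P_Ny,\mathbb P_N^\perp y)_{\mathbf{L^2}(\omega)}$. What differs is how that cross term is absorbed.

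The paper bounds it by $\frac12\|\mathbb P_N^\perp y\|_V^2+\frac{\gamma_\lambda^2}{2\lambda}\|\mathbb P_Ny\|_H^2$, so it pays on the low modes in the $H$ norm. That cost is balanced against the damping $\lambda\mu_\lambda\|\mathbb P_Ny\|_H^2$ produced by the spectral inequality. This is why the paper keeps your $\gamma_\lambda=\lambda C_1e^{C_1\sqrt\lambda}$ but takes $\mu_\lambda=(\gamma_\lambda/\lambda)^2=C_1^2e^{2C_1\sqrt\lambda}$, the square of the spectral constant.

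You instead absorb the cross term into half of $\gamma_\lambda\mu_\lambda\|\mathbb P_Ny\|_{\mathbf{L^2}(\omega)}^2$ before invoking the spectral inequality. The high modes then pay only $\frac{\gamma_\lambda}{2\mu_\lambda}\|\mathbb P_N^\perp y\|_H^2$. This is sharper and gives the rate $\lambda/2$ with constant $\sqrt{C_1}e^{C_1\sqrt\lambda/2}$.

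Because the sign term in \eqref{eq:feedback-Stokes} depends on $\mu_\lambda$, your feedback is not literally the paper's. However, your computation only uses $\lambda C_1e^{C_1\sqrt\lambda}\le\gamma_\lambda\le\lambda\mu_\lambda$, which the paper's parameters satisfy. So your argument proves the proposition for the paper's $G_\lambda$ as well, again with rate $\lambda/2$. Your closing remark that $\mu_\lambda$ is taken equal to the spectral constant describes your design, not the paper's, where it is the square.

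What the paper's bookkeeping buys is the term $-\frac12\|y\|_V^2$ in \eqref{eq:stabilization-Stokes}. It is reused later for the $L^2(0,T;V)$ bound \eqref{eq:energy-estimate} and to absorb the convective term in \eqref{eq:priori_stab_NS} and \eqref{eq:weighted-energy-3D}. You discard it by keeping only $\lambda\|\mathbb P_N^\perp y\|_H^2$ from the Stokes term. That is harmless for this proposition, and your route can recover it. Keep $(Sy,\mathscr C_\lambda y)_H=\mu_\lambda\|\mathbb P_Ny\|_V^2+\|\mathbb P_N^\perp y\|_V^2$ and use $\|\mathbb P_N^\perp y\|_V^2-\frac\lambda2\|\mathbb P_N^\perp y\|_H^2\geq\frac14\|\mathbb P_N^\perp y\|_V^2+\frac\lambda4\|\mathbb P_N^\perp y\|_H^2$. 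This gives exactly $\frac{d}{dt}\|y\|_\mu^2\leq-\frac12\|y\|_V^2-\frac\lambda2\|y\|_\mu^2$.
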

\begin{proof}
Fix $T>0$ and let $y$ be a strong solution of \eqref{eq:closed_loop_Stokes}. Adapting the methodology proposed by Xiang in \cite{xiang2023small}, we define the feedback law by
\begin{equation*}
\begin{aligned}
G_\lambda (y(t)):= -\gamma_\lambda \mathbb{P}_N y(t)+\hat{u}, \quad  y(t)\in H, \  t\in [0,\infty), 
\end{aligned}
\end{equation*}
where
\[\gamma_\lambda:=\lambda C_1e^{C_1\sqrt{\lambda}} \quad \text{ and } \quad \mu_\lambda:=(\gamma_\lambda/\lambda)^2=C_1^2e^{2C_1\sqrt{\lambda}}>1,\]
and  $\hat u$ will be defined later. The feedback law is decomposed in two terms; the first one $-\gamma_\lambda\mathbb P_Ny$ is taken from \cite{xiang2023small} and provides rapid stabilization, while $\hat u$ is designed to reject the disturbance. \noindent Let us introduce the Lyapunov functional $V: H\longrightarrow \mathbb{R}_0^+$ defined by
\begin{equation}\label{eq:functional}
V(y(t)):=\mu_{\lambda} \|\mathbb{P}_N y\|_{H}^2 +  \| \mathbb{P}_N^\perp y \|_{H}^2.
\end{equation}
Notice that
\begin{equation}\label{eq:equiv_V-H}
V(y)=\|y\|_\mu^2, \qquad \|y\|_{H}^2 \leq V(y) \leq \mu_\lambda\|y\|_{H}^2 ,\quad \forall y \in H.
\end{equation}
Since $y\in H^1(0,T;H)$, the following identity holds for almost every $t\in(0,T)$:
\begin{equation*}
\begin{aligned}  
\frac{d}{dt} V(y(t)) = \mu_\lambda \frac{d}{dt} \left( \mathbb{P}_N y,  \mathbb{P}_N y \right)_{H} + \frac{d}{dt}\left( \mathbb{P}_N^\perp y,  \mathbb{P}_N^\perp y \right)_{H} = 2\mu_\lambda \left(  \mathbb{P}_N y,  \frac{d}{dt}y \right)_{H}  +  2\left( \mathbb{P}_N^\perp y, \frac{d}{dt}y\right)_{H}. 
\end{aligned}
\end{equation*} 
We estimate both terms using the projected equation in $H$. Since $y\in L^2(0,T;D(S))$, all the following identities hold for almost every $t\in(0,T)$. 
\begin{equation*}
\begin{aligned}
\left(\mathbb P_Ny,\frac{d}{dt}y\right)_H &= \left(\mathbb P_Ny,-Sy-\gamma_\lambda P(\mathds1_\omega\mathbb P_Ny)+P(\mathds1_\omega(\hat u+d))\right)_H\\
&=-\|\mathbb P_Ny\|_V^2 -\gamma_\lambda\left(\mathbb P_Ny,\mathds1_\omega\mathbb P_Ny\right)_{\mathbf{L^2}(\Omega)}+\left(\mathbb P_Ny,\mathds1_\omega(\hat u+d)\right)_{\mathbf{L^2}(\Omega)}\\
&=-\|\mathbb P_Ny\|_V^2 -\gamma_\lambda\|\mathbb P_Ny\|_{\mathbf{L^2}(\omega)}^2+\left(\mathbb P_Ny,\mathds1_\omega(\hat u+d)\right)_{\mathbf{L^2}(\Omega)}.
\end{aligned}
\end{equation*}
Furthermore, using the spectral inequality \eqref{lebeau_chaves}, we obtain
\begin{equation*}
\begin{aligned}
\left(\mathbb P_Ny,\frac{d}{dt}y\right)_H
&\leq-\|\mathbb P_Ny\|_V^2
-\gamma_\lambda\left(C_1e^{C_1\sqrt{\lambda}}\right)^{-1}
\|\mathbb P_Ny\|_H^2
+\left(\mathds1_\omega\mathbb P_Ny,\hat u+d\right)_{\mathbf{L^2}(\Omega)}\\
&=-\|\mathbb P_Ny\|_V^2
-\lambda\|\mathbb P_Ny\|_H^2
+\left(\mathds1_\omega\mathbb P_Ny,\hat u+d\right)_{\mathbf{L^2}(\Omega)}.
\end{aligned}
\end{equation*}
On the other hand, we have
\begin{equation*}
\begin{aligned}
\left(\mathbb P_N^\perp y,\frac{d}{dt}y\right)_H
&= \left(\mathbb P_N^\perp y,-Sy-\gamma_\lambda P(\mathds1_\omega\mathbb P_Ny)+P(\mathds1_\omega(\hat u+d))\right)_H\\
&=-\|\mathbb P_N^\perp y\|_V^2
-\gamma_\lambda\left(\mathbb P_N^\perp y,\mathds1_\omega\mathbb P_Ny\right)_{\mathbf{L^2}(\Omega)}
+\left(\mathbb P_N^\perp y,\mathds1_\omega(\hat u+d)\right)_{\mathbf{L^2}(\Omega)}\\
&\leq-\|\mathbb P_N^\perp y\|_V^2
+\gamma_\lambda\|\mathbb P_N^\perp y\|_H
\|\mathbb P_Ny\|_{\mathbf{L^2}(\omega)}
+\left(\mathbb P_N^\perp y,\mathds1_\omega(\hat u+d)\right)_{\mathbf{L^2}(\Omega)}.
\end{aligned}
\end{equation*}
Since $\tau_j>\lambda$ for every $j>N$, we have
$\lambda\|\mathbb P_N^\perp y\|_H^2
\leq\|\mathbb P_N^\perp y\|_V^2$.
Using Young's inequality, we obtain
\[\gamma_\lambda\|\mathbb P_N^\perp y\|_H
\|\mathbb P_Ny\|_{\mathbf{L^2}(\omega)}
\leq\frac12\|\mathbb P_N^\perp y\|_V^2
+\frac{\gamma_\lambda^2}{2\lambda}\|\mathbb P_Ny\|_H^2.\]
Therefore,
\begin{equation*}
\left(\mathbb P_N^\perp y,\frac{d}{dt}y\right)_H
\leq-\frac12\|\mathbb P_N^\perp y\|_V^2
+\frac{\gamma_\lambda^2}{2\lambda}\|\mathbb P_Ny\|_H^2
+\left(\mathbb P_N^\perp y,\mathds1_\omega(\hat u+d)\right)_{\mathbf{L^2}(\Omega)}.
\end{equation*}
Gathering terms and using again
$\lambda\|\mathbb P_N^\perp y\|_H^2\leq\|\mathbb P_N^\perp y\|_V^2$,
we obtain
\begin{equation*}
\begin{aligned}
\frac{d}{dt}V(y(t))
\leq{}&-2\mu_\lambda\|\mathbb P_Ny\|_V^2
-\frac12\|\mathbb P_N^\perp y\|_V^2
-\frac{\lambda}{2}\|\mathbb P_N^\perp y\|_H^2
-2\mu_\lambda\lambda\|\mathbb P_Ny\|_H^2\\
&+\frac{\gamma_\lambda^2}{\lambda}\|\mathbb P_Ny\|_H^2
+2\bigl(\mathscr C_\lambda y,\mathds1_\omega(\hat u+d)\bigr)_{\mathbf{L^2}(\Omega)}.
\end{aligned}
\end{equation*}
Since $\mu_\lambda=(\gamma_\lambda/\lambda)^2$, we have
$\gamma_\lambda^2/\lambda=\mu_\lambda\lambda$. Therefore,
\begin{equation*}
\begin{aligned}
\frac{d}{dt}V(y(t))
\leq{}&-2\mu_\lambda\|\mathbb P_Ny\|_V^2
-\frac12\|\mathbb P_N^\perp y\|_V^2
-\mu_\lambda\lambda\|\mathbb P_Ny\|_H^2
-\frac{\lambda}{2}\|\mathbb P_N^\perp y\|_H^2+2\bigl(\mathscr C_\lambda y,\mathds1_\omega(\hat u+d)\bigr)_{\mathbf{L^2}(\Omega)}.
\end{aligned}
\end{equation*}
Since $\mu_\lambda>1$, this gives
\begin{equation}\label{eq:stab}
\frac{d}{dt}V(y(t))
\leq-\frac12\|y\|_V^2-\frac{\lambda}{2}V(y(t))
+2\bigl(\mathscr C_\lambda y,\mathds1_\omega(\hat u+d)\bigr)_{\mathbf{L^2}(\Omega)}.
\end{equation}

%-----------------
\noindent \textbf{Design of \texorpdfstring{$\hat{u}$.}{u}}\label{subsec2-2}
To attenuate the effect of the unknown disturbance $d$, we now construct the control term $ \hat{u}$, following the recent strategy proposed in \cite{guzman2025rapid}. Since the disturbance is supposed to be unknown, we can not choose $\hat{u}=-d$. Under assumption \ref{A_2}, $ \|d\|_{\mathbf{L^2}(\omega)} \leq D $. Then, it suffices to consider the feedback law
\begin{equation*}
\hat{u}=-D \sign_{\mathbf{L^2}(\Omega)}\left(\mathds 1_\omega \mathscr{C}_\lambda y\right).
\end{equation*}
Thus, the feedback law takes the form
\begin{equation}\label{eq:feedback-Stokes}
G_\lambda(y)=-\gamma_\lambda\mathbb P_Ny-D\sign_{\mathbf{L^2}(\Omega)}\left(\mathds1_\omega\mathscr C_\lambda y\right),\quad y\in H.
\end{equation}
Coming back to \eqref{eq:stab}, we estimate the third term using the Cauchy-Schwarz inequality, as follows
\begin{equation*}
\begin{aligned}
\left(\hat{u}+d,\mathds 1_\omega \mathscr{C}_\lambda y\right)_{\mathbf{L^2}(\Omega)} &= \left(\hat{u},\mathds 1_\omega \mathscr{C}_\lambda y\right)_{\mathbf{L^2}(\Omega)}+\left(d,\mathds 1_\omega \mathscr{C}_\lambda y\right)_{\mathbf{L^2}(\Omega)}\\
&\leq \left(\hat{u},\mathds 1_\omega \mathscr{C}_\lambda y\right)_{\mathbf{L^2}(\Omega)}+\left\|d\right\|_{\mathbf{L^2}(\omega)}\left\|\mathds 1_\omega \mathscr{C}_\lambda  y\right\|_{\mathbf{L^2}(\Omega)}.
\end{aligned}
\end{equation*}
Let $\theta\in \sign_{\mathbf{L^2}(\Omega)} \left(\mathds 1_\omega \mathscr{C}_\lambda y\right)$, it follows that
\begin{equation}\label{eq:ineq_sign}
\begin{aligned}
\left(\hat{u}+d,\mathds 1_\omega \mathscr{C}_\lambda y\right)_{\mathbf{L^2}(\Omega)}&\leq -D\left(\theta,\mathds 1_\omega \mathscr{C}_\lambda y\right)_{\mathbf{L^2}(\Omega)} +D\left\|\mathds 1_\omega \mathscr{C}_\lambda y\right\|_{\mathbf{L^2}(\Omega)}\\
&\leq -D\left\|\mathds 1_\omega \mathscr{C}_\lambda y\right\|_{\mathbf{L^2}(\Omega)}+D\left\|\mathds 1_\omega \mathscr{C}_\lambda y\right\|_{\mathbf{L^2}(\Omega)}=0.
\end{aligned}
\end{equation}
Therefore,
\begin{equation}\label{eq:stabilization-Stokes}
\frac{d}{dt}V(y(t))\le-\frac12\| y\|_{V}^2-\frac\lambda2 V(y(t)).
\end{equation}
This differential inequality implies the exponential decay of $V(y(t))$. Indeed, we observe that
\begin{equation*}
V(y(t))\leq e^{-\frac{\lambda}{2}t}V(y(0)), \qquad t\in[0,T].
\end{equation*}
Consequently,
\begin{equation*}
\|y(t)\|_{H}^2 \leq V(y(t)) \leq e^{-\frac{\lambda}{2} t} V(y(0)) \leq e^{-\frac{\lambda}{2} t} \mu_\lambda\|y(0)\|_{H}^2  \leq C_1^2 e^{2 C_1 \sqrt{\lambda}} e^{-\frac{\lambda}{2} t}\|y(0)\|_{H}^2 .   
\end{equation*}
Thus, we obtain \eqref{eq:fin_Stabilization-Stokes} with $C_\lambda =\sqrt{\mu_\lambda}=C_1e^{C_1\sqrt{\lambda}}.$
\end{proof}

%------------------------
\section{Well-posedness of the closed-loop Stokes system}
\label{sec:WP-Stokes}
In this section, we establish the existence and uniqueness
of generalized solutions to the closed-loop Stokes system using maximal monotone operator theory. For regular data, we obtain strong solutions with sufficient regularity to apply \Cref{prop:stabStokes}. We then extend the exponential
decay estimate to generalized solutions by approximation
and continuous dependence on the data. Consider the closed-loop system
\begin{equation}\label{eq:closed-loop-stokes}
\begin{cases}
\partial_t y-\Delta y+\nabla p + \gamma_\lambda\mathds 1_\omega \mathbb P_N y+D\mathds 1_\omega\eta =
\mathds 1_\omega d, & \text{in } \Omega\times(0,T),\\
\nabla\cdot y =0, & \text{in }\Omega\times(0,T),\\
\eta\in\sign_{\mathbf{L^2}(\Omega)} (\mathds 1_\omega \mathscr{C}_\lambda y), & \text{a.e. in }(0,T),\\
y=0, & \text{on }\partial\Omega\times(0,T),\\
y(0)=y_0,& \text{in }\Omega.
\end{cases}
\end{equation}
Applying the Leray projection $P$ to \eqref{eq:closed-loop-stokes}, we obtain the abstract differential inclusion
\begin{equation}\label{eq:abstract-stokes}
\begin{cases}
y'+A y\ni P(\mathds 1_\omega d), & \text{a.e. in } (0,T).\\
y(0)=y_0,
\end{cases}
\end{equation}
Here, $P(\partial_t y)=\partial_t(Py)=\partial_t y$ in the sense of distributions in time, since $Py=y$. The multivalued operator $A$ is given by
\begin{equation}\label{eq:A-def}
\begin{array}{rcl}
A:D(S) &\longrightarrow& 2^H,\\
y &\longmapsto& Ay:=L_\lambda y+\mathcal By.
\end{array}
\end{equation}
where
\begin{equation}\label{eq:Llambda-def}
\begin{aligned}
L_\lambda y
&:=Sy+\gamma_\lambda P(\mathds1_\omega\mathbb P_Ny),
&&y\in D(S),\\
\mathcal By
&:=DP\left(\mathds1_\omega
\sign_{\mathbf{L^2}(\Omega)}
\left(\mathds1_\omega\mathscr C_\lambda y\right)\right),
&&y\in H.
\end{aligned}
\end{equation}
Observe that the differential inclusion \eqref{eq:abstract-stokes} is equivalent to the following equation,
\[y'+Sy+\gamma_\lambda P(\mathds 1_\omega\mathbb P_N y)+DP(\mathds 1_\omega\eta)=P(\mathds 1_\omega d),\]
with $\eta\in \sign_{\mathbf{L^2}(\Omega)} (\mathds 1_\omega \mathscr C_\lambda y)$. Note that pressure $p$ disappears after applying $P$ since $P(\nabla p)=0$.\medskip

To prove the well-posedness of system \eqref{eq:closed-loop-stokes}, we will use maximal monotone operator theory. More precisely, the argument is based on two fundamental properties: the monotonicity of $A$ and the surjectivity of $I+A$, where the inner product $(\cdot,\cdot)_\mu$ plays a crucial role. Let $H_\mu$ denote the space $H$ endowed with the inner product $(\cdot,\cdot)_\mu$. Since $\mathscr{C}_\lambda$ is an isomorphism from $H$ onto $H$, the norms induced by $(\cdot,\cdot)_H$ and $(\cdot,\cdot)_\mu$ are equivalent, defining the same topology. Together, these properties allow us to apply the classical existence and uniqueness theory.

\begin{prop}\label{prop:A-monotone-stokes}
The operator $A$ defined by \eqref{eq:A-def} is monotone in $H_\mu$.
\end{prop}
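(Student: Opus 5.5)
The plan is to verify directly that for every $y_1,y_2\in D(S)$ and every choice $\xi_i\in Ay_i$, $i=1,2$, we have $(\xi_1-\xi_2,y_1-y_2)_\mu\geq0$. By the definition of $\mathcal B$, we can write $\xi_i=L_\lambda y_i+DP(\mathds1_\omega\eta_i)$ with $\eta_i\in\sign_{\mathbf{L^2}(\Omega)}(\mathds1_\omega\mathscr C_\lambda y_i)$. Set $z:=y_1-y_2\in D(S)$. Since $(u,v)_\mu=(u,\mathscr C_\lambda v)_H$, the quantity to bound from below splits as
\[
(L_\lambda z,\mathscr C_\lambda z)_H+D\bigl(P(\mathds1_\omega(\eta_1-\eta_2)),\mathscr C_\lambda z\bigr)_H .
\]
Here I use that $L_\lambda$ is linear, so $L_\lambda y_1-L_\lambda y_2=L_\lambda z$. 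I treat the two terms separately.

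\emph{The sign term.} Since $\mathscr C_\lambda z\in H$ and $P$ is the orthogonal projection onto $H$, we get $(P(\mathds1_\omega(\eta_1-\eta_2)),\mathscr C_\lambda z)_H=(\eta_1-\eta_2,\mathds1_\omega\mathscr C_\lambda y_1-\mathds1_\omega\mathscr C_\lambda y_2)_{\mathbf{L^2}(\Omega)}$. This uses the linearity of $\mathscr C_\lambda$ and the fact that $\mathds1_\omega^2=\mathds1_\omega$. The right-hand side is nonnegative by the monotonicity of $\sign_{\mathbf{L^2}(\Omega)}$ recalled in \Cref{sec:pre}, applied with $f_i=\mathds1_\omega\mathscr C_\lambda y_i$.

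\emph{The linear term.} I would redo the computation of \Cref{prop:stabStokes} with $d=0$ and $\hat u=0$, now read as a lower bound on $(L_\lambda z,\mathscr C_\lambda z)_H$. Because $S$ commutes with $\mathbb P_N$, the first piece is $(Sz,\mathscr C_\lambda z)_H=\mu_\lambda\|\mathbb P_Nz\|_V^2+\|\mathbb P_N^\perp z\|_V^2$. The feedback piece is
\[
\gamma_\lambda(\mathds1_\omega\mathbb P_Nz,\mathscr C_\lambda z)_{\mathbf{L^2}(\Omega)}=\gamma_\lambda\mu_\lambda\|\mathbb P_Nz\|_{\mathbf{L^2}(\omega)}^2+\gamma_\lambda(\mathds1_\omega\mathbb P_Nz,\mathbb P_N^\perp z)_{\mathbf{L^2}(\Omega)} .
\]
For the first summand, the spectral inequality \eqref{lebeau_chaves} together with $\gamma_\lambda=\lambda C_1e^{C_1\sqrt\lambda}$ gives $\gamma_\lambda\mu_\lambda\|\mathbb P_Nz\|^2_{\mathbf{L^2}(\omega)}\geq\mu_\lambda\lambda\|\mathbb P_Nz\|_H^2$. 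For the cross summand, Young's inequality and $\lambda\|\mathbb P_N^\perp z\|_H^2\leq\|\mathbb P_N^\perp z\|_V^2$ give the bound $\frac12\|\mathbb P_N^\perp z\|_V^2+\frac{\gamma_\lambda^2}{2\lambda}\|\mathbb P_Nz\|_H^2$, exactly as in the proof of \Cref{prop:stabStokes}. Using $\gamma_\lambda^2/\lambda=\mu_\lambda\lambda$, these combine to
\[
(L_\lambda z,\mathscr C_\lambda z)_H\geq\mu_\lambda\|\mathbb P_Nz\|_V^2+\tfrac12\|\mathbb P_N^\perp z\|_V^2+\tfrac{\mu_\lambda\lambda}{2}\|\mathbb P_Nz\|_H^2\geq0 .
\]
Adding the two pieces yields monotonicity of $A$ in $H_\mu$; this even gives strict coercivity on the linear part.

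The main obstacle is the non-symmetric cross term $\gamma_\lambda(\mathds1_\omega\mathbb P_Nz,\mathbb P_N^\perp z)$. It has no sign, and in the plain $H$ inner product it cannot be absorbed by the low-frequency part, since $\|\mathbb P_Nz\|_V^2$ only controls $\tau_1\|\mathbb P_Nz\|_H^2$. This is exactly why the weighted product $(\cdot,\cdot)_\mu$ is used. The factor $\mu_\lambda$ in front of the low modes, combined with the spectral inequality, produces $\mu_\lambda\lambda\|\mathbb P_Nz\|_H^2$, and with the choice $\mu_\lambda=(\gamma_\lambda/\lambda)^2$ this is just enough to dominate the Young remainder. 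The remaining steps are routine: $Sz\in H$ for $z\in D(S)$, and $\mathscr C_\lambda z\in V$, so the pairing $(Sz,\mathscr C_\lambda z)_H$ equals the $V$-inner product expression above.
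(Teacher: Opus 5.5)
Your proposal is correct and follows the paper's proof essentially step by step. It uses the same splitting of $(\xi_1-\xi_2,z)_\mu$ into $(L_\lambda z,z)_\mu$ plus the sign term. It uses the same spectral-inequality-plus-Young lower bound for the linear part, which the paper closes as $\geq\frac12\|z\|_V^2$ using $\mu_\lambda>1$. And it uses the same reduction of the sign term, via the orthogonality of $P$, to the monotonicity of $\sign_{\mathbf{L^2}(\Omega)}$.
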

\begin{proof}
Let $y_i\in D(A)$ and let $\xi_i\in Ay_i$, $i=1,2$. By \eqref{eq:A-def}, there exist $\eta_i\in \sign_{\mathbf{L^2}(\Omega)} (\mathds 1_\omega \mathscr C_\lambda y_i)$ such that $\xi_i=L_\lambda y_i+DP(\mathds 1_\omega\eta_i)$. Set $z=y_1-y_2$, then
\begin{equation}\label{eq:monotone-split}
(\xi_1-\xi_2,z)_\mu=(L_\lambda z,z)_\mu+D(P(\mathds 1_\omega(\eta_1-\eta_2)),z)_\mu.
\end{equation}
%The pressure does not appear in \eqref{eq:monotone-split}, since the equation has been projected by $P$. 
%Since $\mathscr{C}_\lambda z\in H$, one has  $(\nabla(p_1-p_2),\mathscr C_\lambda z)_{H}=0.$
Using the spectral inequality \eqref{lebeau_chaves} and Young's
inequality as in Section~\ref{sec:sta-Stokes}, we obtain
\begin{equation}\label{eq:Llambda-coercive}
\begin{aligned}
(L_\lambda z,z)_\mu
&=\mu_\lambda\|\mathbb P_Nz\|_V^2
+\|\mathbb P_N^\perp z\|_V^2
+\mu_\lambda\gamma_\lambda\|\mathbb P_Nz\|_{\mathbf{L^2}(\omega)}^2
+\gamma_\lambda(\mathbb P_Nz,\mathbb P_N^\perp z)_{\mathbf{L^2}(\omega)}\\
&\geq\mu_\lambda\|\mathbb P_Nz\|_V^2
+\frac12\|\mathbb P_N^\perp z\|_V^2
+\left(\mu_\lambda\lambda-\frac{\gamma_\lambda^2}{2\lambda}\right)
\|\mathbb P_Nz\|_H^2\\
&\geq\frac12\|z\|_V^2.
\end{aligned}
\end{equation}
Here we used $\mu_\lambda=(\gamma_\lambda/\lambda)^2>1$. It remains to treat the sign term. Using \eqref{eq:mu-C},
the orthogonality of $P$ and the fact that $\mathscr C_\lambda z\in H$, we obtain
\[\begin{aligned}
D(P(\mathds1_\omega(\eta_1-\eta_2)),z)_\mu
&=D(\mathds1_\omega(\eta_1-\eta_2),\mathscr C_\lambda z)_{\mathbf{L^2}(\Omega)}\\
&=D(\eta_1-\eta_2,
\mathds1_\omega\mathscr C_\lambda y_1
-\mathds1_\omega\mathscr C_\lambda y_2)_{\mathbf{L^2}(\Omega)}.
\end{aligned}\]
Since $\sign_{\mathbf{L^2}(\Omega)}$ is monotone, it follows that
\begin{equation}\label{eq:sign-monotone}
D(P(\mathds 1_\omega(\eta_1-\eta_2)),z)_\mu\geq0.
\end{equation}
Combining \eqref{eq:monotone-split}, \eqref{eq:Llambda-coercive}, and
\eqref{eq:sign-monotone}, we get
\[(\xi_1-\xi_2,y_1-y_2)_\mu\geq \frac 12\|y_1-y_2\|_V^2\geq0.\]
Therefore, $A$ is monotone in $H_\mu$.
\end{proof}

To prove the maximal monotonicity of $A$, a key point is to identify  $\mathcal{B}$ with the subdifferential in $H_\mu$ of a convex functional.

\begin{prop}\label{prop:A-maximal-stokes}
The operator $A$ defined by \eqref{eq:A-def} is maximal monotone in $H_\mu$.
\end{prop}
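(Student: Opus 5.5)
Since monotonicity of $A$ in $H_\mu$ is already established (\Cref{prop:A-monotone-stokes}), by Minty's theorem it suffices to prove that $I+A$ is onto $H$. The plan is to write $A=L_\lambda+\mathcal B$ and exploit two structural facts. First, $\mathcal B$ is a subdifferential in $H_\mu$. Consider the convex, continuous functional
\[
\varphi(y):=D\|\mathds1_\omega\mathscr C_\lambda y\|_{\mathbf{L^2}(\Omega)},\qquad y\in H.
\]
In $H$ with the standard inner product, its subdifferential is $\mathscr C_\lambda P(\mathds1_\omega D\sign_{\mathbf{L^2}(\Omega)}(\mathds1_\omega\mathscr C_\lambda y))$. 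This follows from the chain rule for the bounded linear map $y\mapsto \mathds1_\omega\mathscr C_\lambda y$, whose adjoint $H\leftarrow\mathbf{L^2}(\Omega)$ is $f\mapsto \mathscr C_\lambda P f$, together with the identity $\partial\|\cdot\|_{\mathbf{L^2}}=\sign_{\mathbf{L^2}(\Omega)}$. Changing to the inner product $(\cdot,\cdot)_\mu=(\cdot,\mathscr C_\lambda\cdot)_H$ multiplies the gradient by $\mathscr C_\lambda^{-1}$. Hence $\partial_\mu\varphi(y)=DP(\mathds1_\omega\sign(\mathds1_\omega\mathscr C_\lambda y))=\mathcal By$. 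Because $\varphi$ is finite and continuous on all of $H_\mu$, $\mathcal B=\partial_\mu\varphi$ is maximal monotone in $H_\mu$ with $D(\mathcal B)=H$, and it is bounded: every element has $H$-norm at most $D$.

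The second fact is that $L_\lambda$ is maximal monotone in $H_\mu$ with domain $D(S)$. Monotonicity is the coercivity estimate \eqref{eq:Llambda-coercive}. For surjectivity of $I+L_\lambda$, note that $L_\lambda=S+K$ with $K=\gamma_\lambda P(\mathds1_\omega\mathbb P_N\cdot)$ bounded on $H$. The form $a(y,v)=(y,v)_V+(Ky,v)_H+(y,v)_H$ on $V$ is continuous. Coercivity is not immediate in the plain $H$ inner product, so I would apply Lax--Milgram to the form $(\,(I+L_\lambda)y,\mathscr C_\lambda v)_H$ on $V$. This works because $\mathscr C_\lambda$ is an isomorphism of $V$ and the form is coercive by \eqref{eq:Llambda-coercive}. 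Lax--Milgram gives $y\in V$ with $y+Sy=f-Ky\in H$ in $V'$, and Stokes regularity then yields $y\in D(S)$. Alternatively, $S$ is self-adjoint, so $I+S$ is maximal monotone in $H$ and $K$ is a bounded perturbation; in $H_\mu$ one can argue through a shifted operator $\omega_0 I+L_\lambda$.

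The main obstacle is showing that the sum $L_\lambda+\mathcal B$ is maximal. Since $\mathcal B$ is maximal monotone with $D(\mathcal B)=H$, the interior of $D(\mathcal B)$ contains $D(L_\lambda)$, and Rockafellar's sum theorem (or the simpler perturbation result: a maximal monotone operator plus an everywhere-defined maximal monotone operator is maximal monotone, cf. Barbu) gives the conclusion directly. I would double-check that both operators are taken in the same Hilbert space $H_\mu$, which is the reason for the weighted inner product: in the plain $H$ inner product $\mathcal B$ is not a subdifferential.

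If I wanted a self-contained argument instead, I would solve $y+L_\lambda y+\mathcal B_\varepsilon y\ni f$ using the Yosida approximation $\mathcal B_\varepsilon$, which is Lipschitz and monotone in $H_\mu$. Existence for this problem follows from a fixed point or from the maximality of $I+L_\lambda$ plus a Lipschitz monotone operator. The bound $\|\mathcal B_\varepsilon y_\varepsilon\|_H\le D$ then gives uniform estimates of $y_\varepsilon$ in $D(S)$. Compactness of $D(S)\hookrightarrow V$ allows passage to the limit, and the strong--weak closedness of the graph of $\sign$ identifies the limit of $\mathcal B_\varepsilon y_\varepsilon$ as an element of $\mathcal By$.
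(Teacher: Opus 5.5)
Your proposal is correct and follows the paper's proof essentially step by step. You identify $\mathcal B=\partial_\mu\Phi$ for $\Phi(y)=D\|\mathds1_\omega\mathscr C_\lambda y\|_{\mathbf{L^2}(\Omega)}$, prove $R(I+L_\lambda)=H$ by Lax--Milgram with test functions $\mathscr C_\lambda v$ plus Stokes regularity, and conclude with the sum theorem since $D(\mathcal B)=H$. Your detour through $\partial_H\Phi$ followed by multiplication by $\mathscr C_\lambda^{-1}$ is the same computation as the paper's direct $\mu$-adjoint $P\mathds1_\omega$. The only slip is that the $H$-adjoint of $y\mapsto\mathds1_\omega\mathscr C_\lambda y$ is $f\mapsto\mathscr C_\lambda P(\mathds1_\omega f)$ rather than $\mathscr C_\lambda Pf$; your final formula already contains the $\mathds1_\omega$, so nothing downstream is affected.
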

\begin{proof}
We prove this by writing $A$ as the sum of a maximal monotone linear operator and a subdifferential.  Let us consider
\begin{equation}
\begin{array}{rcl}
\Phi:H_\mu & \longrightarrow&\mathbb R,\\
y & \longmapsto & \Phi(y):=  D\|\mathds 1_\omega \mathscr C_\lambda y\|_{\mathbf{L^2}(\Omega)}. 
\end{array}
\end{equation}
Since $\mathds 1_\omega \mathscr C_\lambda \in \mathcal L (H_\mu; \mathbf{L^2}(\Omega))$ and the norm of $\mathbf{L^2}(\Omega)$ is convex and continuous, $\Phi$ is proper, convex and lower semicontinuous on $H_\mu$. Hence, by \cite[Theorem~A, p.~210]{Rockafellar1970}, the subdifferential $\partial_\mu\Phi$,  taken with respect to the inner product $(\cdot,\cdot)_\mu$, is maximal monotone in $H_\mu$. We now characterize the subdifferential $\partial_\mu\Phi.$
\begin{comment}
Let $y\in H$, by definition, $\xi\in\partial_\mu\Phi(y)$
if and only if
\[\Phi(y+h)-\Phi(y)\geq (\xi,h)_\mu,\quad \forall\,h\in H.\]
Let $\eta\in\sign_{\mathbf L^2(\Omega)}\bigl(\mathds{1}_\omega\mathscr C_\lambda y\bigr).$ Since $\sign_{\mathbf L^2(\Omega)}=\partial\|\cdot\|_{\mathbf L^2(\Omega)},$ for every $h\in H$,
\[\begin{aligned}
 \Phi(y+h)-\Phi(y)&=D\|\mathds{1}_\omega\mathscr C_\lambda(y+h)\|_{\mathbf L^2(\Omega)}-
D\|\mathds{1}_\omega\mathscr C_\lambda y\|_{\mathbf L^2(\Omega)}
\\
&\geq D\bigl(\eta,\mathds{1}_\omega\mathscr C_\lambda h
\bigr)_{\mathbf L^2(\Omega)}.   
\end{aligned}\]
Since $\mathscr C_\lambda h\in H$ and $P$ is the orthogonal projection
onto $H$,
\[D\bigl(\eta,\mathds{1}_\omega\mathscr C_\lambda h\bigr)_{\mathbf L^2(\Omega)}=D\bigl(\mathds{1}_\omega\eta,\mathscr C_\lambda h\bigr)_{\mathbf L^2(\Omega)}=D\bigl(P(\mathds{1}_\omega\eta),\mathscr C_\lambda h\bigr)_H=\bigl(DP(\mathds{1}_\omega\eta),h\bigr)_\mu.\]
Therefore, $DP ( \mathds{1}_\omega \eta) \in \partial_\mu \Phi(y),$
and hence
\[\left\{DP(\mathds{1}_\omega\eta):\eta\in\sign_{\mathbf L^2(\Omega)}\bigl(\mathds{1}_\omega\mathscr C_\lambda y\bigr)\right\}\subset\partial_\mu\Phi(y).\]
Conversely, 
\end{comment}
Set $L:=\mathds1_\omega\mathscr C_\lambda$, so that
$\Phi(y)=D\|Ly\|_{\mathbf{L^2}(\Omega)}$. Since $L\in\mathcal L(H_\mu;\mathbf{L^2}(\Omega))$ and the norm is continuous on $\mathbf{L^2}(\Omega)$, the subdifferential chain rule yields
\[\partial_\mu\Phi(y)
=DL_\mu^*\sign_{\mathbf{L^2}(\Omega)}(Ly),\]
where $L_\mu^*$ denotes the adjoint with respect to the inner product $(\cdot,\cdot)_\mu$
characterized by
\[(L_\mu^*\eta,h)_\mu=(\eta,Lh)_{\mathbf L^2(\Omega)},\qquad\eta\in\mathbf L^2(\Omega),\ h\in H.\]
Indeed,
\[(\eta,Lh)_{\mathbf L^2(\Omega)}=(\mathds{1}_\omega\eta,\mathscr C_\lambda h)_{\mathbf L^2(\Omega)}=\bigl(P(\mathds{1}_\omega\eta),\mathscr C_\lambda h\bigr)_H=\bigl(P(\mathds{1}_\omega\eta),h\bigr)_\mu.\]
Hence, $L_\mu^*\eta=P(\mathds{1}_\omega\eta),$ that is, $L_\mu^* = P \mathds{1}_\omega.$ Consequently,
\[\partial_\mu\Phi(y)=\left\{DP(\mathds{1}_\omega\eta):\eta\in\sign_{\mathbf L^2(\Omega)}\bigl(\mathds{1}_\omega\mathscr C_\lambda y\bigr)\right\}.\]
The above characterization shows that
\begin{equation}\label{eq:B-subdiff}
\mathcal B=\partial_\mu\Phi.
\end{equation}
Consider the linear operator $L_\lambda$ defined in \eqref{eq:Llambda-def}. Its monotonicity follows from
\eqref{eq:Llambda-coercive}. For a given $f\in H$, to prove maximality we solve
\[y+L_\lambda y=f.\]
We seek $y\in V$ satisfying
\begin{equation}\label{eq:linear-resolvent-var}
(y,z)_\mu+(y,\mathscr C_\lambda z)_V
+\gamma_\lambda
(\mathds1_\omega\mathbb P_Ny,\mathscr C_\lambda z)_{\mathbf{L^2}(\Omega)}
=(f,z)_\mu,\quad  z\in V.
\end{equation}
By \eqref{eq:Llambda-coercive} and the density of $D(S)$ in $V$,
we have, for every $y\in V$,
\[\|y\|_\mu^2+(y,\mathscr C_\lambda y)_V+\gamma_\lambda(\mathds1_\omega\mathbb P_Ny,\mathscr C_\lambda y)_{\mathbf{L^2}(\Omega)}\geq \|y\|_\mu^2+\frac12\|y\|_V^2.\]
Therefore, the bilinear form in \eqref{eq:linear-resolvent-var}
is continuous and coercive on $V$. By the Lax--Milgram theorem,
there exists a unique $y\in V$ satisfying \eqref{eq:linear-resolvent-var}. Since $\mathscr C_\lambda^{-1}
=\mu_\lambda^{-1}\mathbb P_N+\mathbb P_N^\perp$ preserves $V$,
taking $z=\mathscr C_\lambda^{-1}v$ in \eqref{eq:linear-resolvent-var} gives
\[(y,v)_V=\bigl(f-y-\gamma_\lambda P(\mathds1_\omega\mathbb P_Ny),v\bigr)_H,\quad  v\in V.\]
Since $f-y-\gamma_\lambda P(\mathds1_\omega\mathbb P_Ny)\in H$,
the variational characterization of the Stokes operator
\cite[Chapter~III, Lemma~3.7]{temam84}
implies that $y\in D(S)$ and
\[Sy=f-y-\gamma_\lambda P(\mathds1_\omega\mathbb P_Ny).\]
Thus, $y+L_\lambda y=f$, proving that $R(I+L_\lambda)=H$.
Together with its monotonicity, this shows that $L_\lambda$ is maximal monotone in $H_\mu$. Finally, since $D(\mathcal B)=D(\partial_\mu\Phi)=H_\mu$ we have $D(L_\lambda)\cap\operatorname{int}D(\partial_\mu\Phi)
=D(S)\neq\emptyset$. Therefore, the sum theorem for maximal monotone operators \cite[Corollary~2.7]{Brezis1973} implies that $A=L_\lambda+\partial_\mu\Phi$ is maximal monotone in $H_\mu$.
\end{proof}

\begin{rem}
The use of the weighted Hilbert structure $H_\mu$ is essential here. If we take the subdifferential of $\Phi$ with respect to the usual inner product of $H$, then the chain rule yields
\[\partial_H \Phi(y)=\left\{ D \mathscr{C}_\lambda^*P(\mathds 1_\omega \eta) : \eta\in\sign_{\mathbf L^2(\Omega)}(\mathds 1_\omega \mathscr{C}_\lambda y) \right\},\]
where $\mathscr{C}_\lambda^*$ denotes the adjoint of $\mathscr{C}_\lambda$ in $H$. This expression introduces an additional operator and does not match the simple feedback structure $DP(\mathds 1_\omega \eta)$ obtained in \eqref{eq:B-subdiff}. Using this argument, we simplify the well-posedness proof of \cite[Prop~5.2]{guzman2025rapid}.
\end{rem}
Whenever $v$ and $\xi$ are strongly measurable and $\xi(t)\in\mathcal Bv(t)$, the characterization \eqref{eq:B-subdiff} admits a measurable sign selection. Indeed, the sets
\[\{\eta\in\mathbf{L^2}(\Omega):\ \eta\in\sign_{\mathbf{L^2}(\Omega)}(\mathds1_\omega\mathscr C_\lambda v(t)),\quad DP(\mathds1_\omega\eta)=\xi(t)\}
\]
are nonempty, closed, and contained in the unit ball, with a measurable graph. The measurable selection theorem \cite[Chapter~8]{aubinfrankowska1990} in the separable Hilbert space $\mathbf{L^2}(\Omega)$ therefore gives a Lebesgue-measurable choice $\eta$, with $\|\eta(t)\|_{\mathbf{L^2}(\Omega)}\leq1$.\medskip

We can now state our well-posedness result for the closed-loop Stokes system.

\begin{prop}\label{thm:well-posedness-stokes}
Let $T>0$ and assume that $d$ satisfies
\textup{\ref{A_1}--\ref{A_2}}.
For every $y_0\in H$, the closed-loop Stokes system \eqref{eq:closed-loop-stokes} admits a unique abstract generalized
solution $y\in C([0,T];H)$. If, in addition, $y_0\in D(S)$ and
$d\in W^{1,1}(0,T;\mathbf{L^2}(\omega))$, then $y$ is a strong solution satisfying
\[y\in W^{1,\infty}(0,T;H)\cap L^\infty(0,T;D(S)).\]
Moreover, any two abstract generalized solutions $y,\widehat y$ corresponding to data $(y_0,d)$ and $(\widehat y_0,\widehat d)$ satisfy
\begin{equation}\label{eq:Cauchy_Showalter}
\|y(t)-\widehat y(t)\|_\mu \leq \|y_0-\widehat y_0\|_\mu
+\sqrt{\mu_\lambda}\int_0^t \|d(s)-\widehat d(s)\|_{\mathbf{L^2}(\omega)}\,ds, \qquad 0\leq t\leq T.
\end{equation}
\end{prop}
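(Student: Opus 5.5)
The plan is to apply the classical theory of evolution equations governed by maximal monotone operators in a Hilbert space (Brezis, Barbu, Showalter) in the space $H_\mu$, where \Cref{prop:A-maximal-stokes} makes $A$ maximal monotone. First I will check that the right-hand side is admissible: by \ref{A_1}--\ref{A_2}, $f:=P(\mathds1_\omega d)$ lies in $L^1(0,T;H)$, in fact in $L^\infty(0,T;H)$ with $\|f(t)\|_H\le D$. Since $H_\mu$ and $H$ carry equivalent norms, $L^1(0,T;H_\mu)=L^1(0,T;H)$ and $C([0,T];H_\mu)=C([0,T];H)$. The notions in \Cref{def:abstract-strong-solution} and \Cref{def:abstract-generalized-solution} are therefore the same in $H$ and in $H_\mu$.

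\textbf{Regular data.} Take $y_0\in D(S)=D(A)$ and $d\in W^{1,1}(0,T;\mathbf{L^2}(\omega))$, so that $f\in W^{1,1}(0,T;H_\mu)$. By the Brezis existence theorem for $y'+Ay\ni f$ with $A$ maximal monotone, $y_0\in D(A)$ and $f\in W^{1,1}$ (e.g. \cite[Theorem 4.6]{barbu2010}), there is a unique abstract strong solution $y\in W^{1,\infty}(0,T;H_\mu)$ with $y(t)\in D(A)$ for all $t$. Moreover $\frac{d^+}{dt}y+(Ay-f)^0=0$, which gives $\|y'(t)\|_\mu$ bounded.

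To place $y$ in $L^\infty(0,T;D(S))$, I write $y'+L_\lambda y+DP(\mathds1_\omega\eta)=f$ with $\|\eta\|\le1$. This gives $Sy=f-y'-\gamma_\lambda P(\mathds1_\omega\mathbb P_Ny)-DP(\mathds1_\omega\eta)\in L^\infty(0,T;H)$. The lower-order term is bounded because $y\in C([0,T];H)$, and elliptic regularity for the Stokes operator then controls the $\mathbf{H^2}$ norm. The interpolation $y\in W^{1,\infty}(0,T;H)\cap L^\infty(0,T;D(S))$ then gives $y\in C([0,T];V)$. The measurable selection remark after \Cref{prop:A-maximal-stokes} supplies a measurable $\eta$ with $\eta(t)\in\sign(\mathds1_\omega\mathscr C_\lambda y(t))$, so $y$ is a strong solution in the sense of \Cref{def:strong-solution}.

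\textbf{Contraction estimate.} For two abstract strong solutions with data $(y_0,f)$ and $(\widehat y_0,\widehat f)$, I will test the difference with $y-\widehat y$ in $(\cdot,\cdot)_\mu$ and use the monotonicity from \Cref{prop:A-monotone-stokes}. This gives $\frac12\frac{d}{dt}\|y-\widehat y\|_\mu^2\le \|f-\widehat f\|_\mu\|y-\widehat y\|_\mu$, and hence $\|y(t)-\widehat y(t)\|_\mu\le\|y_0-\widehat y_0\|_\mu+\int_0^t\|f-\widehat f\|_\mu$. To reach \eqref{eq:Cauchy_Showalter} I then use
\[
\|P(\mathds1_\omega(d-\widehat d))\|_\mu\le\sqrt{\mu_\lambda}\,\|P(\mathds1_\omega(d-\widehat d))\|_H\le\sqrt{\mu_\lambda}\,\|d-\widehat d\|_{\mathbf{L^2}(\omega)}.
\]

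\textbf{General data.} For $y_0\in H$ and general $d$, I approximate $y_0$ by $y_0^k\in D(S)$, using that $D(S)$ is dense in $H$. I approximate $d$ by $d^k\in W^{1,1}(0,T;\mathbf{L^2}(\omega))$, obtained by time mollification after extension, which keeps $\|d^k(t)\|\le D$ since mollification is an average. The contraction estimate makes the corresponding strong solutions Cauchy in $C([0,T];H)$, so they converge to a limit $y$, which is an abstract generalized solution. The estimate \eqref{eq:Cauchy_Showalter} passes to the limit for any two abstract generalized solutions, because each of them is a $C([0,T];H)$-limit of strong solutions whose data converge. Applying it with the same data gives uniqueness.

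\textbf{Main obstacle.} I expect the delicate step to be the regular case, namely invoking the correct version of the existence theorem for $f\in W^{1,1}$ in the weighted space $H_\mu$. The remaining steps are upgrading the $H_\mu$-valued Lipschitz regularity to $L^\infty(0,T;D(S))$ through the $L^\infty$ bound on the sign selection, and checking that the selection is measurable.
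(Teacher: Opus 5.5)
Your proposal is correct and follows essentially the same route as the paper. Both arguments use maximal monotonicity of $A$ in $H_\mu$ and the existence theorem for $y_0\in D(A)$, $f\in W^{1,1}$. Both then recover $Sy\in L^\infty(0,T;H)$ from the equation and the bound $\|DP(\mathds1_\omega\eta)\|_H\le D$, and pass the $\mu$-contraction estimate, stated at the level of $f$, to generalized solutions.

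The only difference is cosmetic. You construct the generalized solution by hand through the Cauchy argument, whereas the paper cites Showalter's Theorem~4.1a directly. In the contraction step you should regularize with $(\|y-\widehat y\|_\mu^2+\varepsilon^2)^{1/2}$, so you never divide by zero. You should also work first on $[\tau,t]$ and let $\tau\to0$, since abstract strong solutions are only $W^{1,1}(\tau,T;H)$. The paper does both explicitly.
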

\begin{proof}
Let $T>0$. By \Cref{prop:A-maximal-stokes}, $A$ is maximal monotone in $H_\mu$. Moreover, $D(A)=D(S)$ is dense in $H_\mu$, and assumption \ref{A_2} gives $f=P(\mathds1_\omega d)\in L^1(0,T;H_\mu)$. Therefore, \cite[Chapter~IV, Theorem~4.1a, p.~183]{s1997} implies that, for every $y_0\in H$, the abstract problem \eqref{eq:abstract-stokes} admits a unique abstract generalized solution $y\in C([0,T];H_\mu)$. By the equivalence of the norms $\|\cdot\|_\mu$ and $\|\cdot\|_H$, we also have $y\in C([0,T];H)$. Assume now, in addition, that $d\in W^{1,1}(0,T;\mathbf{L^2}(\omega))$ and $y_0\in D(A)$. Then, \cite[Chapter~IV, Theorem~4.1]{s1997} gives the existence of a unique abstract strong solution $y\in W^{1,\infty}(0,T;H)$ of \eqref{eq:abstract-stokes}, satisfying $y(t)\in D(A)$ for every $t\in[0,T]$. For a.e. $t\in(0,T)$, the inclusion \eqref{eq:abstract-stokes} gives
\[Sy(t)=f(t)-y'(t)-\gamma_\lambda P(\mathds1_\omega\mathbb P_Ny(t))-b(t),
\qquad b(t)\in\mathcal By(t).\]
By definition, $\|b(t)\|_H\leq D$. Since $f,y',y\in L^\infty(0,T;H)$, we obtain $Sy\in L^\infty(0,T;H)$. Thus, both $y$ and $Sy$ are bounded in $H$, which gives $y\in L^\infty(0,T;D(S))$ for the graph norm $\|v\|_{D(S)}:=\|v\|_H+\|Sv\|_H$. The embedding $H^1(0,T;H)\cap L^2(0,T;D(S))\hookrightarrow C([0,T];V)$ gives $y\in C([0,T];V)$. The sign term admits the measurable representation described above, so the solution is strong in the sense of \Cref{def:strong-solution}.
Finally, let $y,\widehat y$ be the strong solutions of \eqref{eq:abstract-stokes} corresponding to disturbances
$d,\widehat d\in W^{1,1}(0,T;\mathbf{L^2}(\omega))$
and initial data $y_0,\widehat y_0\in D(A)$. By the monotonicity of $A$ in $H_\mu$, we obtain
\[\begin{aligned}
\frac12\frac{d}{dt}\|y(t)-\widehat y(t)\|_\mu^2&\leq
\bigl(P(\mathds1_\omega(d(t)-\widehat d(t))),y(t)-\widehat y(t)\bigr)_\mu\\
&\leq \sqrt{\mu_\lambda}\|d(t)-\widehat d(t)\|_{\mathbf{L^2}(\omega)}\|y(t)-\widehat y(t)\|_\mu.
\end{aligned}\]
Dividing by $(\|y(t)-\widehat y(t)\|_\mu^2+\varepsilon^2)^{1/2}$, integrating in time, and letting $\varepsilon\to0$ yields \eqref{eq:Cauchy_Showalter} for strong solutions. The same argument, applied first on $[\tau,t]$ and then
letting $\tau\to0$, gives
\[\|y(t)-\widehat y(t)\|_\mu\leq \|y_0-\widehat y_0\|_\mu+\int_0^t\|f(s)-\widehat f(s)\|_\mu ds\]
for any two abstract strong solutions with right-hand sides
$f,\widehat f\in L^1(0,T;H_\mu)$.
By \Cref{def:abstract-generalized-solution}, this estimate
passes to abstract generalized solutions.
Taking $f=P(\mathds1_\omega d)$ and
$\widehat f=P(\mathds1_\omega\widehat d)$, and using
\[\|P(\mathds1_\omega(d-\widehat d))\|_\mu\leq\sqrt{\mu_\lambda}\|d-\widehat d\|_{\mathbf{L^2}(\omega)},\]
we obtain \eqref{eq:Cauchy_Showalter}.
\end{proof}

Note that the arguments presented in \Cref{sec:sta-Stokes}
apply to the strong solutions obtained above for regular data.
We therefore obtain the following corollary.
\begin{cor}\label{cor:stab-Stokes-strong}
Let $\lambda>0$, $T>0$, and $y_0\in D(S)$. Assume that $d$ satisfies \textup{\ref{A_1}--\ref{A_2}} and $d\in W^{1,1}(0,T;\mathbf{L^2}(\omega))$. Then the unique strong solution of \eqref{eq:abstract-stokes} satisfies \eqref{eq:fin_Stabilization-Stokes} for every $t\in[0,T]$.
\end{cor}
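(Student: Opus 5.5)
The plan is to check that the strong solution given by \Cref{thm:well-posedness-stokes} satisfies every hypothesis used in the proof of \Cref{prop:stabStokes}, and then repeat that computation word for word. With $y_0\in D(S)$ and $d\in W^{1,1}(0,T;\mathbf{L^2}(\omega))$, \Cref{thm:well-posedness-stokes} provides a unique strong solution
\[
y\in W^{1,\infty}(0,T;H)\cap L^\infty(0,T;D(S))\subset H^1(0,T;H)\cap L^2(0,T;D(S))\cap C([0,T];V).
\]
It also provides a measurable selection $\eta(t)\in\sign_{\mathbf{L^2}(\Omega)}(\mathds1_\omega\mathscr C_\lambda y(t))$ with $\|\eta(t)\|_{\mathbf{L^2}(\Omega)}\le 1$, and the projected equation holds in $L^2(0,T;H)$. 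So $y$ is a strong solution in the sense of \Cref{def:strong-solution}, and this is exactly the class to which \Cref{prop:stabStokes} applies.

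Next I verify the two analytic ingredients of the Lyapunov argument. First, $V(y(t))=\|y(t)\|_\mu^2$ is absolutely continuous on $[0,T]$, with
\[
\frac{d}{dt}V(y(t))=2(y'(t),\mathscr C_\lambda y(t))_H
\]
for a.e. $t$. This holds because $y\in H^1(0,T;H)$ and $\mathscr C_\lambda$ is bounded and self-adjoint on $H$. Second, for a.e. $t$ I substitute $y'=-Sy-\gamma_\lambda P(\mathds1_\omega\mathbb P_Ny)-DP(\mathds1_\omega\eta)+P(\mathds1_\omega d)$. I use $(Sy,\mathscr C_\lambda y)_H=\mu_\lambda\|\mathbb P_Ny\|_V^2+\|\mathbb P_N^\perp y\|_V^2$, which is valid since $y(t)\in D(S)$. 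I also use that $P$ is an orthogonal projection, so that $(P(\mathds1_\omega g),\mathscr C_\lambda y)_H=(g,\mathds1_\omega\mathscr C_\lambda y)_{\mathbf{L^2}(\Omega)}$. The spectral inequality \eqref{lebeau_chaves} and Young's inequality then lead to \eqref{eq:stab}, with $\hat u=-D\eta$.

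The sign term is handled as in \eqref{eq:ineq_sign}. The identity $(\eta,f)_{\mathbf{L^2}(\Omega)}=\|f\|_{\mathbf{L^2}(\Omega)}$ for $\eta\in\sign_{\mathbf{L^2}(\Omega)}(f)$, together with \ref{A_2} holding for a.e. $t$, gives
\[
(-D\eta+d,\mathds1_\omega\mathscr C_\lambda y)_{\mathbf{L^2}(\Omega)}\le 0 .
\]
This yields \eqref{eq:stabilization-Stokes} for a.e. $t\in(0,T)$.

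The final step is integrating this differential inequality. Since $t\mapsto V(y(t))$ is absolutely continuous, $\frac{d}{dt}\bigl(e^{\lambda t/2}V(y(t))\bigr)\le0$ a.e. This gives $V(y(t))\le e^{-\lambda t/2}V(y_0)$ for every $t\in[0,T]$, not only almost every $t$, because $y\in C([0,T];H)$. The norm equivalence \eqref{eq:equiv_V-H} then gives \eqref{eq:fin_Stabilization-Stokes} with $C_\lambda=\sqrt{\mu_\lambda}$.

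The only delicate point is justifying the a.e. chain rule and the measurability of the sign selection $\eta$, so that the pointwise inequality can be integrated. Both follow from the regularity in \Cref{thm:well-posedness-stokes} and the measurable selection remark preceding it. I expect no real obstacle beyond this.
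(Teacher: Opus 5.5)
Your proposal is correct and follows the paper's proof. The paper observes that the strong solution from \Cref{thm:well-posedness-stokes} lies in $W^{1,\infty}(0,T;H)\cap L^\infty(0,T;D(S))$, hence in the class covered by \Cref{prop:stabStokes}, and applies that proposition; you re-run the Lyapunov computation explicitly instead of citing it, which changes nothing essential.
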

\begin{proof}
By \Cref{thm:well-posedness-stokes}, the solution satisfies
$y\in W^{1,\infty}(0,T;H)\cap L^\infty(0,T;D(S))$. Therefore, \Cref{prop:stabStokes} applies.
\end{proof}

We now extend the exponential decay estimate to generalized
solutions and show that they belong to $L^2(0,T;V)$ for every
$T>0$. This completes the proof of our main result for the
Stokes system.

\medskip
\noindent\emph{Proof of \Cref{thm:sta-Stokes}.}
We use the feedback law $G_\lambda$ defined in \eqref{eq:feedback-Stokes}. Fix $T>0$ and $y_0\in H$.
By \Cref{thm:well-posedness-stokes}, there exists a unique abstract
generalized solution $y\in C([0,T];H)$ of \eqref{eq:abstract-stokes}. Choose $y_0^k\in D(S)$ and $d^k\in W^{1,1}(0,T;\mathbf{L^2}(\omega))$ such that
\[y_0^k\to y_0\quad\text{in }H,\qquad d^k\to d\quad\text{in }L^1(0,T;\mathbf{L^2}(\omega)),\]
with the same disturbance bound
\[\|d^k(t)\|_{\mathbf{L^2}(\omega)}\leq D, \qquad\text{for a.e. }t\in(0,T).\]
Let $y^k$ be the strong solution corresponding to
$(y_0^k,d^k)$. By \eqref{eq:Cauchy_Showalter},
\[\sup_{t\in[0,T]}\|y^k(t)-y(t)\|_\mu\leq \|y_0^k-y_0\|_\mu+\sqrt{\mu_\lambda}\int_0^T \|d^k(s)-d(s)\|_{\mathbf{L^2}(\omega)} ds \xrightarrow[k\to\infty]{}0.\]
In particular, $y^k\to y$ in $C([0,T];H)$. By \Cref{def:generalized-solution}, the approximation above shows that $y$ is a generalized solution of the closed-loop Stokes system. Moreover, every generalized solution in this
sense is an abstract generalized solution of \eqref{eq:abstract-stokes}. Its uniqueness therefore follows from \Cref{thm:well-posedness-stokes}. We first establish the energy regularity of $y$. Integrating \eqref{eq:stabilization-Stokes} along $y^k$ gives
\begin{equation}\label{eq:energy-estimate}
\|y^k(t)\|_\mu^2
+\frac12\int_0^t\|y^k(s)\|_V^2\,ds
\leq \|y_0^k\|_\mu^2,
\qquad t\in[0,T].
\end{equation}
Thus, $\{y^k\}$ is bounded in $L^2(0,T;V)$. Its convergence in $C([0,T];H)$ identifies every weak limit in $L^2(0,T;V)$ with $y$. Consequently, $y\in L^2(0,T;V)$, and weak lower semicontinuity yields
\[\|y(t)\|_\mu^2+\frac12\int_0^t\|y(s)\|_V^2\,ds \leq \|y_0\|_\mu^2,\qquad t\in[0,T].\]
We now identify the feedback term in the limit. For each $k$, choose a measurable selection $\eta^k$ such that
\[\eta^k(t)\in\sign_{\mathbf{L^2}(\Omega)}(\mathds1_\omega\mathscr C_\lambda y^k(t)),\qquad\|\eta^k(t)\|_{\mathbf{L^2}(\Omega)}\leq1,\]
and
\[(y^k)'+L_\lambda y^k+DP(\mathds1_\omega\eta^k)=P(\mathds1_\omega d^k).\]
Up to a subsequence,
\[\eta^k\rightharpoonup^\ast\eta \quad\text{in }L^\infty(0,T;\mathbf{L^2}(\Omega)),\qquad \|\eta(t)\|_{\mathbf{L^2}(\Omega)}\leq1 \quad\text{for a.e. }t\in(0,T).\]
Set $z^k:=\mathds1_\omega\mathscr C_\lambda y^k$ and $z:=\mathds1_\omega\mathscr C_\lambda y$. Since $y^k\to y$ in $C([0,T];H)$, we have $z^k\to z$ in $L^2(0,T;\mathbf{L^2}(\Omega))$.
The weak-$*$ convergence above also implies
$\eta^k\rightharpoonup\eta$ in
$L^2(0,T;\mathbf{L^2}(\Omega))$.
Applying the strong--weak closedness property to the pointwise realization of $\sign_{\mathbf{L^2}(\Omega)}$ in this space, we obtain
\[\eta(t)\in\sign_{\mathbf{L^2}(\Omega)}(\mathds1_\omega\mathscr C_\lambda y(t))\quad\text{for a.e. }t\in(0,T).\]
Using also $y^k\rightharpoonup y$ in $L^2(0,T;V)$ and $d^k\to d$ in $L^1(0,T;\mathbf{L^2}(\omega))$, we pass to the limit in the strong equation in the sense of distributions. Since $L_\lambda y\in L^2(0,T;V')$ and $P(\mathds1_\omega d),DP(\mathds1_\omega\eta)\in L^2(0,T;H)$, we obtain $y'\in L^2(0,T;V')$ and
\[y'+L_\lambda y+DP(\mathds1_\omega\eta)=P(\mathds1_\omega d)\quad\text{in }L^2(0,T;V').\]
Finally, by \Cref{cor:stab-Stokes-strong}, each $y^k$ satisfies
\begin{equation}\label{eq:decay-strong}
\|y^k(t)\|_H \leq C_\lambda e^{-\frac{\lambda}{4}t}\|y_0^k\|_H, \qquad t\in[0,T].
\end{equation}
Passing to the limit using $y^k\to y$ in $C([0,T];H)$
and $y_0^k\to y_0$ in $H$, we obtain
\[\|y(t)\|_H \leq C_\lambda e^{-\frac{\lambda}{4}t}\|y_0\|_H, \qquad t\in[0,T].\]
Since $T>0$ is arbitrary and uniqueness ensures that the
solutions agree on overlapping time intervals, this gives
the global generalized solution and the estimate
\eqref{eq:finalStokes}.
\qed
%---------------------------------------------------

\begin{rem}\label{rem:data-approximation}
In the above proof, we may take $y_0^k=\sum_{j=1}^k(y_0,e_j)_H e_j$. Then $y_0^k\in D(S)$, $y_0^k\to y_0$ in $H$, and
$\|y_0^k\|_H\leq\|y_0\|_H$. The sequence $d^k$ can be obtained by extending $d|_{(0,T)}$ by zero to $\mathbb R$ and convolving in time with a standard family of nonnegative mollifiers. These approximations belong to $W^{1,1}(0,T;\mathbf{L^2}(\omega))$, converge to $d$ in $L^1(0,T;\mathbf{L^2}(\omega))$, and satisfy $\|d^k(t)\|_{\mathbf{L^2}(\omega)}\leq D$.
The same construction will be used in the proofs of Theorems~\ref{thm:sta-NS} and~\ref{thm:sta-NS-3D}. If $y_0\in V$, the spectral truncations also converge in $V$ and satisfy $\|y_0^k\|_V\leq\|y_0\|_V$. The time convolutions converge in $L^2(0,T;\mathbf{L^2}(\omega))$ as well, since the disturbances are uniformly bounded.

\end{rem}
\section{A Priori Stabilization Estimates for the 2D Navier-Stokes System}
\label{sec:sta-NS-2D}
In this section,  we deal with Navier-Stokes system in dimension $n=2$. The estimate derived here relies on the two-dimensional trilinear inequality \eqref{3.56}, which controls the nonlinear contribution by a multiple of $\|y\|_H\|y\|_V^2$. The three-dimensional case is treated separately in \Cref{sec:sta-NS-3D}, where the finite-dimensional projection estimate in \Cref{lem:finite-projection-convection} is combined with a bound in $V$ to establish global strong well-posedness and exponential stabilization. We keep the Lyapunov functional introduced in \eqref{eq:functional}. The linear part of the estimate has already been obtained in the Stokes case. Hence, along strong solutions in the sense of \Cref{def:strong-solution}, the only additional contribution comes from the nonlinear convection term.  More precisely,
\[\frac12\frac{d}{dt}\|y(t)\|_\mu^2+(L_\lambda y(t),y(t))_\mu+D(P(\mathds 1_\omega\eta(t)),y(t))_\mu=
(P(\mathds 1_\omega d(t)),y(t))_\mu-(B(y(t),y(t)),y(t))_\mu,\]
where $\eta \in\sign_{\mathbf{L^2}(\Omega)} (\mathds 1_\omega\mathscr C_\lambda y)$. By the Stokes estimate \eqref{eq:stabilization-Stokes} and \eqref{eq:ineq_sign}, the first three terms yield the same dissipative contribution as before. Hence, the nonlinear Lyapunov estimate becomes
\[\frac{d}{dt}V(y(t))\leq -\frac12\|y(t)\|_V^2-\frac{\lambda}{2}V(y(t))+2\bigl|(B(y(t),y(t)),y(t))_\mu\bigr|.\]
Therefore, it remains only to estimate the nonlinear convection term
\begin{equation}\label{eq:nonlinear_est}
\begin{aligned}
(B(y,y),y)_\mu &=\langle B(y,y),\mathscr C_\lambda y\rangle_{V',V} \\
&=b(y,y,\mathscr C_\lambda y)\\
&= b(y,y,  y)+(\mu_\lambda-1)b(y,y,\mathbb{P}_Ny)\\
&=(\mu_\lambda-1)b(y,y,\mathbb P_Ny).
\end{aligned}    
\end{equation}
By \eqref{eq:cancel-b}, one has that $b(y,y,y)=0.$ The trilinear estimate \eqref{3.56} gives $|b(y,y,\mathbb{P}_Ny)|\le \sqrt{2}\|y\|_H\|y\|_V^2$. Consequently,
\[\bigl|(B(y,y),y)_\mu\bigr|=(\mu_\lambda-1)\bigl|b(y,y,\mathbb P_Ny)\bigr|\leq \sqrt{2}(\mu_\lambda-1)\|y\|_H\|y\|_V^2.\]
Similar to \Cref{sec:sta-Stokes}, we have proved the following a priori stability estimate
\begin{prop}\label{prop:stabNS}
Let $n=2$, $\lambda>0$, and assume that $d$ satisfies \textup{\ref{A_1}--\ref{A_2}}. For every $T>0$, every strong solution $y$ of \eqref{eq:closed_loop_Navier-Stokes}, with initial datum $y(0)=y_0\in V$ and the feedback law $G_\lambda$ defined in \eqref{eq:feedback-Stokes}, satisfies the following a priori estimate for almost every $t\in(0,T)$:
\begin{equation}\label{eq:priori_stab_NS}
\frac{d}{dt}V(y(t))\leq\left(-\frac12+2\sqrt{2}(\mu_\lambda-1)\|y(t)\|_H\right)\|y(t)\|_V^2-\frac{\lambda}{2}V(y(t)).
\end{equation}
\end{prop}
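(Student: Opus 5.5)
The plan is to differentiate the Lyapunov functional $V(y(t))=\|y(t)\|_\mu^2$ along a strong solution and reuse the Stokes computation verbatim, isolating the convective term as the only new contribution. Since $y\in H^1(0,T;H)\cap L^2(0,T;D(S))$, the identity $\frac12\frac{d}{dt}\|y\|_\mu^2=(y',\mathscr C_\lambda y)_H$ holds for a.e. $t$. By \Cref{def:strong-solution}, $y'=-Sy-B(y,y)-\gamma_\lambda P(\mathds1_\omega\mathbb P_Ny)-DP(\mathds1_\omega\eta)+P(\mathds1_\omega d)$ in $L^2(0,T;H)$, with $\eta(t)\in\sign_{\mathbf{L^2}(\Omega)}(\mathds1_\omega\mathscr C_\lambda y(t))$. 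Here $B(y,y)\in H$ a.e. by \eqref{eq:B-2d}, so every pairing is legitimate.

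Testing with $\mathscr C_\lambda y$ and using that $\mathscr C_\lambda y\in H$ lets me drop the Leray projections. The linear terms are then exactly those treated in the proof of \Cref{prop:stabStokes}. The spectral inequality \eqref{lebeau_chaves}, Young's inequality, the relation $\tau_j>\lambda$ for $j>N$, and the choice $\mu_\lambda=(\gamma_\lambda/\lambda)^2$ give the contribution $-\frac12\|y\|_V^2-\frac\lambda2V(y)$. The disturbance and sign terms combine as in \eqref{eq:ineq_sign} to give a nonpositive quantity, using only $\|d(t)\|_{\mathbf{L^2}(\omega)}\le D$ and $(\eta,\mathds1_\omega\mathscr C_\lambda y)=\|\mathds1_\omega\mathscr C_\lambda y\|$. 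This yields
\[
\frac{d}{dt}V(y(t))\le-\frac12\|y\|_V^2-\frac\lambda2V(y)+2|(B(y,y),\mathscr C_\lambda y)_H|.
\]

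For the nonlinear term I will write $\mathscr C_\lambda y=y+(\mu_\lambda-1)\mathbb P_Ny$. The cancellation \eqref{eq:cancel-b} kills $b(y,y,y)$, leaving $(\mu_\lambda-1)b(y,y,\mathbb P_Ny)$. I then apply \eqref{3.56} with $v=\mathbb P_Ny$, which gives $\sqrt2\|y\|_H\|y\|_V\|\mathbb P_Ny\|_V$. Using $\|\mathbb P_Ny\|_V\le\|y\|_V$, this bound becomes $\sqrt2(\mu_\lambda-1)\|y\|_H\|y\|_V^2$. Multiplying by $2$ and collecting the $\|y\|_V^2$ terms gives \eqref{eq:priori_stab_NS}.

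No step is genuinely hard. The only point needing care is justifying the computation a.e. in time for strong solutions: the identity $(B(y,y),\mathscr C_\lambda y)_H=b(y,y,\mathscr C_\lambda y)$ requires $\mathscr C_\lambda y\in V$, which holds because $\mathscr C_\lambda$ preserves $V$. The skew-symmetry identity also needs $y\in V$ in the first slot, which is satisfied. I would state these facts explicitly rather than redo the linear estimates.
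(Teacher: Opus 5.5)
Your proposal is correct and is essentially the paper's argument: you reuse the Stokes computation with the sign--disturbance cancellation \eqref{eq:ineq_sign}, write $\mathscr C_\lambda y=y+(\mu_\lambda-1)\mathbb P_Ny$ so that \eqref{eq:cancel-b} reduces the convective term to $(\mu_\lambda-1)b(y,y,\mathbb P_Ny)$, and then bound this term by \eqref{3.56} together with $\|\mathbb P_Ny\|_V\le\|y\|_V$. Your explicit justification of the pairings, namely $B(y)\in H$ a.e.\ via \eqref{eq:B-2d} and $\mathscr C_\lambda y\in V$, spells out details the paper leaves implicit.
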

%%%%%%%%%%%%%%%%%%%%%%%%

\section{Well-posedness of the closed-loop Navier--Stokes system}
\label{sec:WP-NS-2D}
Here we consider the closed-loop Navier--Stokes system in dimensions $n=2,3$. In the Stokes case, the operator $A$ is maximal monotone in $H_\mu$, so well-posedness follows from nonlinear maximal monotone theory. The additional convective term prevents a direct application of this argument. To retain the same operator framework, we follow the approximation method of \cite[Section 5.7]{barbu2010}, adapted to the weighted inner product $(\cdot,\cdot)_\mu$. We introduce a truncation of the convective term and prove that the resulting closed-loop operator is maximal monotone after adding a suitable multiple of the identity. We construct strong solutions to the truncated problems in both dimensions. In dimension two, we remove the truncation and combine the resulting well-posedness with the a priori stabilization estimate from \Cref{sec:sta-NS-2D} to prove \Cref{thm:sta-NS}. The three-dimensional well-posedness and stabilization results are established in \Cref{sec:sta-NS-3D}.\smallskip

\noindent We consider the following closed-loop system:
\begin{equation}
\label{eq:closed-loop-NS}
\begin{cases}
\partial_t y-\Delta y+(y\cdot\nabla)y+\nabla p + \gamma_\lambda\mathds 1_\omega \mathbb P_N y+D\mathds 1_\omega\eta =
\mathds 1_\omega d, & \text{in } \Omega\times(0,T),\\
\nabla\cdot y =0, & \text{in }\Omega\times(0,T),\\
\eta\in\sign_{\mathbf{L^2}(\Omega)} (\mathds 1_\omega \mathscr{C}_\lambda y), & \text{a.e. in }(0,T),\\
y=0, & \text{on }\partial\Omega\times(0,T),\\
y(0)=y_0,& \text{in }\Omega.
\end{cases}
\end{equation}
After applying the Leray projector to $\eqref{eq:closed-loop-NS}$, this is equivalent to
\begin{equation}\label{eq:NS-abstract}
y'+Sy+B(y,y)+\gamma_\lambda P(\mathds 1_\omega\mathbb P_Ny)+DP(\mathds 1_\omega\eta)=P(\mathds 1_\omega d),
\end{equation}
with $\eta(t)\in\sign_{\mathbf{L^2}(\Omega)}(\mathds 1_\omega\mathscr C_\lambda y(t))$ for a.e. $t\in(0,T)$.

\subsection*{The truncated problem in dimensions two and three}
For $\delta>0$, let $B_\delta$ be defined by \eqref{eq:definition_B_delta}. Replacing $B$ by $B_\delta$ in \eqref{eq:NS-abstract}, we consider the truncated closed-loop system
\begin{equation}\label{eq:NS-sigma_N}
(y_\delta)'+L_\lambda y_\delta+B_\delta(y_\delta)
+\mathcal B(y_\delta)\ni P(\mathds1_\omega d).
\end{equation}
The truncation $B_\delta$ is introduced to control the nonlinear Navier--Stokes term and to recover the quasi-monotonicity properties needed in the maximal monotone framework. The next lemma establishes quasi-monotonicity in $H_\mu$.
\begin{lemma}
Let $n=2$ or $n=3$, and let $\delta>0$. For every $\varepsilon>0$,
there exists $C_{\delta,\lambda,\varepsilon}>0$ such that
\begin{equation}\label{eq:BN-cuasimonotonicity}
\left|
\left\langle B_\delta(y_1)-B_\delta(y_2),
\mathscr C_\lambda(y_1-y_2)\right\rangle_{V',V}
\right|
\leq \varepsilon\|y_1-y_2\|_V^2
+C_{\delta,\lambda,\varepsilon}\|y_1-y_2\|_H^2,
\qquad y_1,y_2\in V.
\end{equation}
\end{lemma}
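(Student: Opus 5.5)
The plan is to write $z:=y_1-y_2$, $u_i:=R_\delta y_i$ and $w:=u_1-u_2$, and to estimate the pairing directly with H\"older and Sobolev inequalities. We will not use skew-symmetry cancellations, since the truncation destroys the structure $b(z,\cdot,z)$. The key observation is that $R_\delta$ is the metric projection of the Hilbert space $V$ onto the closed ball of radius $\delta$. Hence it is $1$-Lipschitz in $V$, which gives
\[
\|w\|_V\leq\|z\|_V,\qquad \|u_i\|_V\leq\delta,\quad i=1,2.
\]
Note that we cannot hope to bound $\|w\|_H$ by $\|z\|_H$, because the scaling factors $\delta/\|y_i\|_V$ depend on $V$-norms. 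The argument must therefore only ever use $V$-control of $w$, and place all the $H$-norm on $z$.

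By bilinearity,
\[
B_\delta(y_1)-B_\delta(y_2)=B(w,u_1)+B(u_2,w),\qquad \mathscr C_\lambda z=z+(\mu_\lambda-1)\mathbb P_Nz .
\]
So the left-hand side of \eqref{eq:BN-cuasimonotonicity} is bounded by $|b(w,u_1,\mathscr C_\lambda z)|+|b(u_2,w,\mathscr C_\lambda z)|$.

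For each of the two terms we apply H\"older with exponents $(6,2,3)$ and the embedding $\mathbf{H_0^1}(\Omega)\hookrightarrow L^6$, valid for $n=2,3$. For the first term this gives
\[
|b(w,u_1,v)|\leq \|w\|_{L^6}\|\nabla u_1\|_{L^2}\|v\|_{L^3}\leq C\delta\|z\|_V\|v\|_{L^3}.
\]
For the second term,
\[
|b(u_2,w,v)|\leq\|u_2\|_{L^6}\|\nabla w\|_{L^2}\|v\|_{L^3}\leq C\delta\|z\|_V\|v\|_{L^3}.
\]
Here $v=\mathscr C_\lambda z$. We then bound $\|\mathscr C_\lambda z\|_{L^3}\leq\|z\|_{L^3}+(\mu_\lambda-1)\|\mathbb P_Nz\|_{L^3}$ using two ingredients. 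The first is the Gagliardo--Nirenberg/Ladyzhenskaya interpolation $\|z\|_{L^3}\leq C\|z\|_H^{1/2}\|z\|_V^{1/2}$, which holds in both dimensions two and three. The second is the finite-dimensional bound $\|\mathbb P_Nz\|_{L^3}\leq C_\lambda\|z\|_H$, which follows, exactly as in \Cref{lem:finite-projection-convection}, from the smoothness of $e_1,\dots,e_N$ and Cauchy--Schwarz on the coefficients.

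Collecting these bounds, we obtain
\[
\bigl|\langle B_\delta(y_1)-B_\delta(y_2),\mathscr C_\lambda z\rangle_{V',V}\bigr|\leq C\delta\|z\|_V^{3/2}\|z\|_H^{1/2}+C\delta\mu_\lambda C_\lambda\|z\|_V\|z\|_H .
\]
Young's inequality, with exponents $(4/3,4)$ for the first term and $(2,2)$ for the second, then yields $\varepsilon\|z\|_V^2+C_{\delta,\lambda,\varepsilon}\|z\|_H^2$, which is \eqref{eq:BN-cuasimonotonicity}.

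The only delicate point is the one flagged above: the truncation makes $w$ uncontrolled in $H$ by $z$. The obstacle is resolved by always placing the derivative on $w$, or $w$ in $L^6$, so that only $\|w\|_V\leq\|z\|_V$ is used, and by reserving the $L^3$ slot for $z$. This is also why the Lipschitz property of $R_\delta$ in $V$, coming from its projection interpretation, is needed rather than a cruder bound with constant $2$; either constant would in fact suffice.
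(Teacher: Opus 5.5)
Your proof is correct, and its skeleton matches the paper's. Both use the splitting $\mathscr C_\lambda z=z+(\mu_\lambda-1)\mathbb P_Nz$, the nonexpansiveness of $R_\delta$ in $V$, and the bound $C\delta\|z\|_V$ for $B(w,u_1)+B(u_2,w)$.

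The real difference is in the unweighted piece $\langle B_\delta(y_1)-B_\delta(y_2),z\rangle_{V',V}$. The paper does not prove this estimate; it imports Barbu's quasi-monotonicity bound \cite[Lemma~5.2, (5.172)]{barbu2010}. You prove it directly, using H\"older with exponents $(6,2,3)$ and the interpolation $\|z\|_{L^3}\le C\|z\|_H^{1/2}\|z\|_V^{1/2}$. This also shows that no cancellation of $b$ is needed.

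For the weighted piece the two arguments are interchangeable. The paper pairs the $V'$ bound with $\|\mathbb P_Nz\|_V\le\sqrt\lambda\|z\|_H$. You pair the same trilinear bound with an $L^3$ bound on $\mathbb P_Nz$. That bound already follows from $\|\mathbb P_Nz\|_V\le\sqrt\lambda\|z\|_H$ and Sobolev, so the smoothness of the $e_j$ is not needed.

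Your route is self-contained and gives an explicit constant, $C_{\delta,\lambda,\varepsilon}\lesssim\delta^4\varepsilon^{-3}+\lambda\mu_\lambda^2\delta^2\varepsilon^{-1}$. The paper's version is shorter but rests on the external reference. Your caution that $\|w\|_H$ is not controlled by $\|z\|_H$ is the point that must be respected here, and your estimates do respect it.
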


\begin{proof}
Notice that, by definition of $\mathscr{C}_\lambda$, one gets
\begin{equation}\label{eq:BN-mu-decomposition}
\begin{aligned}
\langle B_\delta(y_1)-B_\delta(y_2),
\mathscr C_\lambda(y_1-y_2)\rangle_{V',V}=&\langle B_\delta(y_1)-B_\delta(y_2),y_1-y_2\rangle_{V',V}\\
&+(\mu_\lambda-1)\langle B_\delta(y_1)-B_\delta(y_2),\mathbb P_N(y_1-y_2)\rangle_{V',V}.
\end{aligned}
\end{equation}
The estimate proved in \cite[Lemma~5.2, equation~(5.172)]{barbu2010} gives
\[|\langle B_\delta(y_1)-B_\delta(y_2),y_1-y_2\rangle_{V',V}|\leq \frac{\varepsilon}{2}\|y_1-y_2\|_V^2+C_{\delta,\varepsilon}\|y_1-y_2\|_H^2.\]
To estimate the additional weighted term, recall that $R_\delta$ is the truncation operator defined in \Cref{sec:pre} and $B_\delta(y)=B(R_\delta y)$, then $\|R_\delta y\|_V\leq\delta$, and $\|R_\delta y_1-R_\delta y_2\|_V\leq\|y_1-y_2\|_V$. Using \cite[eq.~(5.169), p.~253]{barbu2010}, we obtain
\[\begin{aligned}
\|B_\delta(y_1)-B_\delta(y_2)\|_{V'} &\leq C\bigl(\|R_\delta y_1\|_V+\|R_\delta y_2\|_V\bigr)\|R_\delta y_1-R_\delta y_2\|_V\\
&\leq C\delta\|y_1-y_2\|_V.
\end{aligned}\]
Since $\|\mathbb P_N(y_1-y_2)\|_V\leq\sqrt{\lambda}\|y_1-y_2\|_H$, Young's inequality yields
\[\begin{aligned}
(\mu_\lambda-1)|\langle B_\delta(y_1)-B_\delta(y_2),\mathbb P_N(y_1-y_2)\rangle_{V',V}|
&\leq C\delta(\mu_\lambda-1)\sqrt{\lambda}
\|y_1-y_2\|_V\|y_1-y_2\|_H\\
&\leq \frac{\varepsilon}{2}\|y_1-y_2\|_V^2
+C_{\delta,\lambda,\varepsilon}\|y_1-y_2\|_H^2.
\end{aligned}\]
\end{proof}
For $\delta>0$, we define
\begin{equation*}
\begin{array}{rcl}
\Gamma_\delta:D(S)&\longrightarrow&H,\\
y&\longmapsto&\Gamma_\delta y:=L_\lambda y+B_\delta(y).
\end{array}
\end{equation*}
Recall that $D(L_\lambda)=D(S)$.

\begin{lemma}\label{lem:Gamma-delta-maximal}
For every $\delta>0$, there exists $\beta_\delta>0$
such that both $\Gamma_\delta+\beta_\delta I$ and
$\Gamma_\delta+\beta_\delta I+\mathcal B$
are maximal monotone in $H_\mu$, with domain $D(S)$.
\end{lemma}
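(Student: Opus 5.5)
We will follow the structure of the proof of \Cref{prop:A-maximal-stokes}, treating $B_\delta$ as a Lipschitz-type perturbation that is absorbed by the coercivity of $L_\lambda$ and a shift $\beta_\delta I$.

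\textbf{Monotonicity of $\Gamma_\delta+\beta_\delta I$.} Fix $\varepsilon=\tfrac14$ in \eqref{eq:BN-cuasimonotonicity} and set $\beta_\delta:=C_{\delta,\lambda,1/4}$. For $y_1,y_2\in D(S)$ put $z=y_1-y_2$. Since $(B_\delta(y_1)-B_\delta(y_2),z)_\mu=\langle B_\delta(y_1)-B_\delta(y_2),\mathscr C_\lambda z\rangle_{V',V}$, we can combine \eqref{eq:Llambda-coercive} with \eqref{eq:BN-cuasimonotonicity} and $\|z\|_H\le\|z\|_\mu$ to get
\[
((\Gamma_\delta+\beta_\delta I)y_1-(\Gamma_\delta+\beta_\delta I)y_2,z)_\mu\geq \tfrac12\|z\|_V^2-\tfrac14\|z\|_V^2-\beta_\delta\|z\|_H^2+\beta_\delta\|z\|_\mu^2\geq\tfrac14\|z\|_V^2.
\]
This proves monotonicity. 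Note that $B_\delta(y)\in H$ for $y\in D(S)$ by \eqref{eq:B-2d} and \Cref{lem:B-3d}, so $\Gamma_\delta$ maps $D(S)$ into $H$.

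\textbf{Range condition.} For $f\in H$ we must solve $y+\Gamma_\delta y+\beta_\delta y=f$. We would pose it variationally on $V$: find $y\in V$ such that, for all $z\in V$,
\[
(1+\beta_\delta)(y,z)_\mu+(y,\mathscr C_\lambda z)_V+\gamma_\lambda(\mathds1_\omega\mathbb P_Ny,\mathscr C_\lambda z)_{\mathbf{L^2}(\Omega)}+\langle B_\delta(y),\mathscr C_\lambda z\rangle_{V',V}=(f,z)_\mu .
\]
The operator $\mathcal T:V\to V'$ defined by the left-hand side is monotone and coercive by the estimate above; for coercivity, take $y_2=0$ and use $B_\delta(0)=0$. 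It is also continuous, since $B_\delta$ is Lipschitz $V\to V'$ with constant $C\delta$ (as shown in the proof of the previous lemma) and $\mathscr C_\lambda$ is bounded on $V$. The Browder--Minty theorem then gives a solution $y\in V$. Alternatively, one can use a fixed point argument built on the Lax--Milgram step for $L_\lambda$.

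Testing with $z=\mathscr C_\lambda^{-1}v$ gives $(y,v)_V=(g,v)_H$ with $g=f-(1+\beta_\delta)y-\gamma_\lambda P(\mathds1_\omega\mathbb P_Ny)-PB_\delta(y)$. The main obstacle is regularity: $B_\delta(y)$ is a priori only in $V'$, so $g$ is not yet known to lie in $H$. We would first deduce $Sy\in V'$ and then bootstrap.

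In 2D, $B(y,y)\in \mathbf{L^{q}}$ for every $q<2$ by Sobolev embedding. Stokes regularity in $W^{2,q}$ then gives $y\in W^{2,q}\subset W^{1,4}$, hence $(y\cdot\nabla)y\in\mathbf{L^2}$ and $y\in D(S)$.

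In 3D, $(y\cdot\nabla)y\in L^{3/2}$ gives $y\in W^{2,3/2}\subset W^{1,3}$, so $(y\cdot\nabla)y\in L^{2-}$, which does not quite reach $\mathbf{L^2}$. One more iteration of $W^{2,p}$ Stokes regularity (Cattabriga) yields $(y\cdot\nabla)y\in\mathbf{L^2}$ and $y\in D(S)$. The scaling factor $(\delta/\|y\|_V)^2$ is just a constant here, so it does not interfere with the bootstrap. Hence $R(I+\Gamma_\delta+\beta_\delta I)=H$, and $\Gamma_\delta+\beta_\delta I$ is maximal monotone in $H_\mu$ with domain $D(S)$.

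\textbf{Adding $\mathcal B$.} By \eqref{eq:B-subdiff}, $\mathcal B=\partial_\mu\Phi$ is maximal monotone in $H_\mu$ with $D(\mathcal B)=H$. Therefore $D(S)\cap\operatorname{int}D(\mathcal B)=D(S)\ne\emptyset$, and \cite[Corollary~2.7]{Brezis1973} shows that $\Gamma_\delta+\beta_\delta I+\mathcal B$ is maximal monotone in $H_\mu$ with domain $D(S)$.
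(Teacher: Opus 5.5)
Your proof is correct. The monotonicity step and the final use of \cite[Corollary~2.7]{Brezis1973} to add $\mathcal B$ are exactly the paper's; the two routes differ only in the range condition.

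The paper never solves the resolvent equation itself. It takes from \cite[Lemma~5.2]{barbu2010} that $S+B_\delta+\alpha_\delta I$ is maximal monotone in $H$ with its usual inner product. It then adds the bounded monotone linear operator $K_\lambda+(\beta_\delta-\alpha_\delta)I$, where $K_\lambda=\gamma_\lambda P(\mathds1_\omega\mathbb P_N)$, and uses the sum theorem in $H$ to get $R(I+\Gamma_\delta+\beta_\delta I)=H$. Since this range condition does not depend on the inner product, Minty's criterion in $H_\mu$ applies. This is why the paper must take $\beta_\delta\geq\max\{C_{\delta,\lambda},\alpha_\delta+\|K_\lambda\|_{\mathcal L(H)}\}$, whereas your choice $\beta_\delta=C_{\delta,\lambda,1/4}$ suffices.

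You instead solve the weighted problem directly by Browder--Minty on $V$. Strong monotonicity comes from the quasi-monotonicity estimate together with the $V$-version of \eqref{eq:Llambda-coercive}, obtained by density. Continuity comes from the Lipschitz bound for $B_\delta:V\to V'$. You then run a $W^{2,p}$ Stokes regularity bootstrap, using that $(\delta/\|y\|_V)^2$ is a constant for the fixed solution. This mirrors the Lax--Milgram argument of \Cref{prop:A-maximal-stokes} and is self-contained. The cost is invoking Cattabriga-type $L^p$ regularity (available since $\partial\Omega$ is smooth) and a dimension-dependent bootstrap. The paper's version is shorter because it delegates the resolvent problem for the truncated Navier--Stokes operator to Barbu's lemma.

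One small correction to your three-dimensional bootstrap: after the first step you already have $y\in L^6$ and $\nabla y\in L^3$, so $(y\cdot\nabla)y\in L^2$ immediately. The extra iteration is harmless but not needed.
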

\begin{proof}
Let $y_1,y_2\in D(S)$ and set $w:=y_1-y_2$.
Taking $\varepsilon=1/4$ in
\eqref{eq:BN-cuasimonotonicity} and using
\eqref{eq:Llambda-coercive}, we obtain
\[\begin{aligned}
\bigl((\Gamma_\delta+\beta I)y_1-(\Gamma_\delta+\beta I)y_2,w\bigr)_\mu&\geq \frac14\|w\|_V^2-C_{\delta,\lambda}\|w\|_H^2+\beta\|w\|_\mu^2\\
&\geq \frac14\|w\|_V^2,
\end{aligned}\]
whenever $\beta\geq C_{\delta,\lambda}$, since $\|w\|_H\leq\|w\|_\mu$. To prove maximality, we first work in $H$ with its usual
inner product. By \cite[Lemma~5.2]{barbu2010}, there exists
$\alpha_\delta>0$ such that
$S+B_\delta+\alpha_\delta I$, with domain $D(S)$,
is maximal monotone in $H$.
Set
\[K_\lambda:=L_\lambda-S=\gamma_\lambda P(\mathds1_\omega\mathbb P_N)\in\mathcal L(H),\]
and choose $\beta_\delta\geq\max\left\{C_{\delta,\lambda},\alpha_\delta+\|K_\lambda\|_{\mathcal L(H)}\right\}$. Then $K_\lambda+(\beta_\delta-\alpha_\delta)I$ is a bounded monotone linear operator defined on all of $H$, and is therefore maximal monotone in $H$. By the sum theorem
\cite[Corollary~2.7]{Brezis1973},
\[\Gamma_\delta+\beta_\delta I=(S+B_\delta+\alpha_\delta I)+\bigl(K_\lambda+(\beta_\delta-\alpha_\delta)I\bigr)\]
is maximal monotone in $H$. In particular, $R(I+\Gamma_\delta+\beta_\delta I)=H$. Since $H$ and $H_\mu$ have the same underlying vector space, this range condition also holds in $H_\mu$. Together with the monotonicity proved above, Minty's criterion shows that $\Gamma_\delta+\beta_\delta I$ is maximal monotone in $H_\mu$. Finally, $\mathcal B=\partial_\mu\Phi$ is maximal monotone in $H_\mu$ and $D(\mathcal B)=H_\mu$. A further application of
\cite[Corollary~2.7]{Brezis1973} shows that $\Gamma_\delta+\beta_\delta I+\mathcal B$ is maximal monotone
in $H_\mu$, with domain $D(S)$.
\end{proof}
We can now establish the existence and regularity of strong solutions to the truncated problem.
\begin{prop}\label{prop:wp-NS-truncated}
Let $n=2$ or $n=3$, $\delta>0$, $T>0$, and $y_0\in D(S)$. Assume that $d$ satisfies \textup{\ref{A_1}--\ref{A_2}} and $d\in W^{1,1}(0,T;\mathbf{L^2}(\omega))$. Then the truncated problem \eqref{eq:NS-sigma_N}, with initial condition $y_\delta(0)=y_0$,
admits a unique strong solution $y_\delta$ satisfying
\[y_\delta\in W^{1,\infty}(0,T;H) \cap L^\infty(0,T;D(S)) \cap C([0,T];V).\]
Moreover, any two strong solutions of the truncated problem with the same $\delta$ and feedback law, corresponding to data $(y_0,d)$ and $(\widehat y_0,\widehat d)$, satisfy
\begin{equation}\label{eq:truncated-dependence}
\begin{aligned}
\|y_\delta(t)-\widehat y_\delta(t)\|_\mu
\leq e^{\beta_\delta t}\biggl(&\|y_0-\widehat y_0\|_\mu\\
&+\sqrt{\mu_\lambda}\int_0^t\|d(s)-\widehat d(s)\|_{\mathbf{L^2}(\omega)}\,ds\biggr),
\qquad t\in[0,T].
\end{aligned}
\end{equation}
\end{prop}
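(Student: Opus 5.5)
The plan is to reduce the truncated problem to the maximal monotone setting of \Cref{lem:Gamma-delta-maximal} through an exponential change of variables, and then proceed exactly as in \Cref{thm:well-posedness-stokes}. Let $\mathcal A_\delta:=\Gamma_\delta+\beta_\delta I+\mathcal B$, which by \Cref{lem:Gamma-delta-maximal} is maximal monotone in $H_\mu$ with $D(\mathcal A_\delta)=D(S)$. The truncated inclusion \eqref{eq:NS-sigma_N} can be written as
\[
y_\delta'+\mathcal A_\delta y_\delta-\beta_\delta y_\delta\ni P(\mathds1_\omega d).
\]
Since this operator is only quasi-monotone, I would substitute $z(t):=e^{-\beta_\delta t}y_\delta(t)$. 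This does not directly remove the $-\beta_\delta y$ shift, because $B_\delta$ and $\mathcal B$ are not linear. I would therefore avoid the change of variables and instead apply the standard theory for Lipschitz perturbations of maximal monotone operators. The equation becomes $y'+\mathcal A_\delta y\ni f+\beta_\delta y$, where $y\mapsto\beta_\delta y$ is Lipschitz on $H_\mu$. By \cite[Chapter~IV]{s1997}, or equivalently \cite[Theorem~4.1 and its Lipschitz-perturbation corollary]{barbu2010}, $\omega$-accretive operators $\mathcal A_\delta-\beta_\delta I$ with $D(S)$ dense generate a unique abstract strong solution whenever $y_0\in D(S)$ and $f=P(\mathds1_\omega d)\in W^{1,1}(0,T;H)$. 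This solution satisfies $y_\delta\in W^{1,\infty}(0,T;H)$ and $y_\delta(t)\in D(S)$ for all $t$.

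Next I would recover the regularity $L^\infty(0,T;D(S))$. From the inclusion,
\[
Sy_\delta=f-y_\delta'-\gamma_\lambda P(\mathds1_\omega\mathbb P_Ny_\delta)-B_\delta(y_\delta)-b,\qquad \|b\|_H\le D.
\]
The only new term compared with the Stokes case is $B_\delta(y_\delta)$. Write $B_\delta(y)=B(R_\delta y)$ with $\|R_\delta y\|_V\le\delta$. In dimension three, \Cref{lem:B-3d} gives $\|B_\delta(y)\|_H\le C\delta^{3/2}\|SR_\delta y\|_H^{1/2}\le C\delta^{3/2}\|Sy\|_H^{1/2}$, using that $R_\delta$ is a scalar multiple of at most one. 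In dimension two the analogous bound follows from \eqref{eq:B-2d}. By Young's inequality,
\[
\|B_\delta(y)\|_H\le\tfrac12\|Sy\|_H+C_\delta,
\]
which can be absorbed to give $\|Sy_\delta\|_H\le 2(\|f\|_H+\|y_\delta'\|_H+C\|y_\delta\|_H+D+C_\delta)$. This bound is in $L^\infty$, so $y_\delta\in L^\infty(0,T;D(S))$. The embedding $H^1(0,T;H)\cap L^2(0,T;D(S))\hookrightarrow C([0,T];V)$ then yields $C([0,T];V)$. A measurable sign selection exists as in the Stokes case, so $y_\delta$ is a strong solution in the sense of \Cref{def:strong-solution}, with $B$ replaced by $B_\delta$.

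For the dependence estimate \eqref{eq:truncated-dependence}, and hence uniqueness, I would take two strong solutions, subtract them, and test with $\mathscr C_\lambda(y_\delta-\widehat y_\delta)$ in $H$, which is legitimate at this regularity. Monotonicity of $\Gamma_\delta+\beta_\delta I+\mathcal B$ in $H_\mu$ gives
\[
\tfrac12\tfrac{d}{dt}\|w\|_\mu^2\le\beta_\delta\|w\|_\mu^2+\sqrt{\mu_\lambda}\|d-\widehat d\|_{\mathbf{L^2}(\omega)}\|w\|_\mu,\qquad w:=y_\delta-\widehat y_\delta.
\]
Dividing by $(\|w\|_\mu^2+\varepsilon^2)^{1/2}$ and applying Gronwall to $e^{-\beta_\delta t}\|w\|_\mu$ yields \eqref{eq:truncated-dependence}.

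I expect the main obstacle to be invoking the existence theorem correctly for the shifted, non-monotone operator $\mathcal A_\delta-\beta_\delta I$. The regularity $W^{1,\infty}$ requires the $\omega$-accretive version of the Kōmura–Brezis theorem rather than the monotone one. If only the monotone version is at hand, I would instead run a fixed-point argument: solve $y'+\mathcal A_\delta y\ni f+\beta_\delta v$ for a given $v\in C([0,T];H)$, and show the map $v\mapsto y$ is a contraction in the weighted norm $\sup_t e^{-2\beta_\delta t}\|\cdot\|_\mu$. To obtain $W^{1,\infty}$ regularity for the fixed point, one additionally needs $\beta_\delta y\in W^{1,1}$, and this I would get by a bootstrap.
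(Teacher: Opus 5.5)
Your proposal is correct and follows the paper's route. You invoke the existence theorem for the quasi-maximal monotone operator $\Gamma_\delta+\mathcal B$ with shift $\beta_\delta$ from \Cref{lem:Gamma-delta-maximal}, as the paper does via \cite[Theorem~4.5]{barbu2010}, and recover $L^\infty(0,T;D(S))$ by absorbing the bound $\|B_\delta(y)\|_H\le C\delta^{3/2}\|Sy\|_H^{1/2}$. Then you pass to $C([0,T];V)$ by the embedding and obtain \eqref{eq:truncated-dependence} from the $\mu$-monotonicity and a regularized Gronwall argument, exactly as in the paper; the discarded change of variables and the fixed-point fallback are unnecessary, since the shifted existence theorem already yields $W^{1,\infty}(0,T;H)$.
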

\begin{proof}
Set $f:=P(\mathds1_\omega d)\in W^{1,1}(0,T;H_\mu)$. By \Cref{lem:Gamma-delta-maximal}, $\Gamma_\delta+\mathcal B+\beta_\delta I$ is maximal monotone in $H_\mu$, with domain $D(S)$. Since $y_0\in D(S)$, \cite[Theorem~4.5, p.~141]{barbu2010}, applied to the quasi-maximal monotone operator $\Gamma_\delta+\mathcal B$ in $H_\mu$ with shift $\beta_\delta$, yields a unique abstract strong solution $y_\delta$ of \eqref{eq:NS-sigma_N}, with $y_\delta(0)=y_0$, satisfying $y_\delta\in W^{1,\infty}(0,T;H_\mu)$, $y_\delta(t)\in D(S)$ for a.e. $t\in(0,T)$. By the equivalence of the norms of $H_\mu$ and $H$, we also have $y_\delta\in W^{1,\infty}(0,T;H)$.
By the definition of $\mathcal B$, every
$\xi\in\mathcal Bv$, $v\in H$, satisfies $\|\xi\|_H\leq D$.
Therefore, \eqref{eq:NS-sigma_N} gives
\[\|\Gamma_\delta y_\delta(t)\|_H\leq \|f(t)\|_H+\|y_\delta'(t)\|_H+D,\qquad\text{for a.e. }t\in(0,T).\]
Since $f\in L^\infty(0,T;H)$ by \ref{A_2} and $y_\delta'\in L^\infty(0,T;H)$, we obtain
\begin{equation}\label{eq:Gamma-delta-Linfty}
\Gamma_\delta y_\delta\in L^\infty(0,T;H).
\end{equation}
By \eqref{eq:B-2d} and Poincar\'e's inequality when $n=2$, and by \eqref{eq:B-3d} when $n=3$, we have $\|B(y)\|_H\leq C\|y\|_V^{3/2}\|Sy\|_H^{1/2}$. The definition of $B_\delta$ therefore gives
\[\|B_\delta(y)\|_H\leq C\delta^{3/2}\|Sy\|_H^{1/2},\qquad y\in D(S).\]
Indeed, for $\|y\|_V>\delta$, the factor $\delta^2\|y\|_V^{-1/2}$ is bounded by $\delta^{3/2}$. Using the definition of $\Gamma_\delta$ and Young's
inequality, for every $y\in D(S)$ we have
\[\begin{aligned}
\|Sy\|_H&\leq \|\Gamma_\delta y\|_H+\gamma_\lambda\|y\|_H+C\delta^{3/2}\|Sy\|_H^{1/2}\\
&\leq \|\Gamma_\delta y\|_H+\gamma_\lambda\|y\|_H+\frac12\|Sy\|_H+C\delta^3.
\end{aligned}\]
Absorbing the term $\frac12\|Sy\|_H$, we obtain
\[\|Sy\|_H\leq 2\|\Gamma_\delta y\|_H+2\gamma_\lambda\|y\|_H+C\delta^3.\]
Since $\Gamma_\delta y_\delta\in L^\infty(0,T;H)$ by \eqref{eq:Gamma-delta-Linfty} and $y_\delta\in L^\infty(0,T;H)$, this gives $Sy_\delta\in L^\infty(0,T;H)$. Therefore, equipping $D(S)$ with its graph norm, we conclude that $y_\delta\in L^\infty(0,T;D(S))$. Finally, the standard embedding $H^1(0,T;H)\cap L^2(0,T;D(S))\hookrightarrow C([0,T];V)$ and the regularity established above imply that $y_\delta\in C([0,T];V)$. Together with the measurable representation of the sign term established above, this regularity shows that $y_\delta$ is a strong solution in the sense of \Cref{def:strong-solution}. For continuous dependence, let $z=y_\delta-\widehat y_\delta$. Subtracting the two equations and using the monotonicity of $\Gamma_\delta+\mathcal B+\beta_\delta I$ in $H_\mu$ yields
\[\frac12\frac{d}{dt}\|z\|_\mu^2\leq\beta_\delta\|z\|_\mu^2+\sqrt{\mu_\lambda}\|d-\widehat d\|_{\mathbf{L^2}(\omega)}\|z\|_\mu.\]
Applying Gronwall's inequality to $(\|z\|_\mu^2+\varepsilon^2)^{1/2}$ and letting $\varepsilon\to0$ proves \eqref{eq:truncated-dependence}. This argument only requires $H^1(0,T;H)$ regularity, so it also applies to any two strong solutions in the sense of \Cref{def:strong-solution}.
\end{proof}

\subsection*{The two-dimensional system}
\begin{prop}\label{thm:wp-NS-strong}
Let $n=2$, $T>0$, and $y_0\in D(S)$. Assume that $d$ satisfies \textup{\ref{A_1}--\ref{A_2}}
and $d\in W^{1,1}(0,T;\mathbf{L^2}(\omega))$. Then the closed-loop Navier--Stokes system \eqref{eq:closed-loop-NS} admits a unique strong solution $y\in W^{1,\infty}(0,T;H) \cap L^\infty(0,T;D(S)) \cap C([0,T];V)$. Moreover, let $y,\widehat y$ be two such solutions corresponding to data $(y_0,d)$ and $(\widehat y_0,\widehat d)$, with the same feedback law
$G_\lambda$ and disturbance bound $D$. For every $R>0$, if $\|y_0\|_H,\|\widehat y_0\|_H\leq R$, then
\begin{equation}\label{eq:NS-continuous-dependence}
\|y-\widehat y\|_{C([0,T];H)}\leq C_{T,R}\left(\|y_0-\widehat y_0\|_H+\|d-\widehat d\|_{L^1(0,T;\mathbf{L^2}(\omega))}\right),
\end{equation}
where $C_{T,R}>0$ depends only on
$T,R,\Omega,\omega,\lambda,D$.
\end{prop}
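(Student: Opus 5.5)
The plan is to obtain the solution of \eqref{eq:closed-loop-NS} as the truncated solution $y_\delta$ of \Cref{prop:wp-NS-truncated} for a single, sufficiently large $\delta$. For that we need an a priori bound on $\sup_{t\in[0,T]}\|y_\delta(t)\|_V$ that does not depend on $\delta$. Once $\|y_\delta(t)\|_V\leq\delta$ holds on $[0,T]$, we have $B_\delta(y_\delta)=B(y_\delta)$ there, so $y_\delta$ is a strong solution of the untruncated system with the stated regularity $W^{1,\infty}(0,T;H)\cap L^\infty(0,T;D(S))\cap C([0,T];V)$.

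\textbf{Step 1: an $H$ bound for the truncated solution.} Test \eqref{eq:NS-sigma_N} with $y_\delta$ in the plain $H$ inner product. By \eqref{eq:cancel-b}, $\langle B_\delta(y),y\rangle_{V',V}=(\delta/\|y\|_V)^2 b(y,y,y)=0$ (or simply $b(y,y,y)=0$ when no truncation is active). The feedback gives $|\gamma_\lambda(\mathds1_\omega\mathbb P_Ny,y)|\leq\gamma_\lambda\|y\|_H^2$, and the sign and disturbance terms are bounded by $D\|y\|_H$ each. Gronwall then yields $\|y_\delta(t)\|_H^2+\int_0^t\|y_\delta\|_V^2\leq K_1(T,R,D)$, independently of $\delta$.

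\textbf{Step 2: a $V$ bound for the truncated solution.} Test \eqref{eq:NS-sigma_N} with $Sy_\delta$; this is justified by the regularity of $y_\delta$. By \eqref{eq:B-2d}, Young, and the fact that the truncation factor is at most $1$,
\[
|(B_\delta(y),Sy)_H|\leq C\|y\|_H^{1/2}\|y\|_V\|Sy\|_H^{3/2}\leq \tfrac14\|Sy\|_H^2+C\|y\|_H^2\|y\|_V^4 .
\]
The remaining terms, namely the feedback, the sign term and $P(\mathds1_\omega d)$, are bounded in $H$ by $C(\|y\|_H+D)$ and are absorbed into $\tfrac14\|Sy\|_H^2$ plus a constant. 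This gives
\[
\frac{d}{dt}\|y\|_V^2\leq C\|y\|_H^2\|y\|_V^2\,\|y\|_V^2+C.
\]
Since $\int_0^T\|y\|_H^2\|y\|_V^2\leq K_1^2$ by Step 1, Gronwall bounds $\sup_t\|y_\delta\|_V$ by a constant depending only on $\|y_0\|_V$, $T$, $D$ and $\lambda$. Choosing $\delta$ larger than this constant gives existence.

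\textbf{Step 3: uniqueness and continuous dependence \eqref{eq:NS-continuous-dependence}.} Let $z=y-\widehat y$ and test the difference of the equations with $\mathscr C_\lambda z$, i.e.\ in $H_\mu$. The linear part together with the sign term is monotone, by \eqref{eq:Llambda-coercive} and \eqref{eq:sign-monotone}, and produces $\tfrac12\|z\|_V^2$. For the nonlinear part write $B(y)-B(\widehat y)=B(z,y)+B(\widehat y,z)$. The term $b(\widehat y,z,z)$ vanishes, and the weighted corrections involve $\mathbb P_N z$, whose $V$ norm is at most $\sqrt\lambda\|z\|_H$. Using \eqref{ladyzhenskaya2d} one gets
\[
|\langle B(y)-B(\widehat y),\mathscr C_\lambda z\rangle|\leq \tfrac14\|z\|_V^2+C\bigl(1+\|y\|_V^2+\|\widehat y\|_V^2\bigr)\|z\|_\mu^2 .
\]
By Step 1 we have $\int_0^T(\|y\|_V^2+\|\widehat y\|_V^2)\leq C(T,R)$ in terms of the $H$ norms of the data. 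Gronwall in the $\varepsilon$-regularized form, as in \Cref{prop:wp-NS-truncated}, then gives \eqref{eq:NS-continuous-dependence} with $C_{T,R}$ depending only on $T,R,\Omega,\omega,\lambda,D$. Taking identical data gives uniqueness.

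\textbf{Main obstacle.} The delicate point is Step 2. The truncated term must be estimated uniformly in $\delta$, and the bound must involve only the factor $\|y\|_H^2\|y\|_V^2$, whose time integral Step 1 controls. It is also essential in Step 3 that the constant depends only on $\|y_0\|_H$ and not on $\|y_0\|_V$, which is why the dependence estimate must rely solely on the $L^2(0,T;V)$ bound from Step 1.
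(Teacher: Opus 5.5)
Your proposal is correct and follows essentially the same route as the paper. You derive $\delta$-independent $H$ and $V$ energy estimates for the truncated solution $y_\delta$ (the latter via \eqref{eq:B-2d}, Young, and Gronwall using the $L^2(0,T;V)$ bound) so that the truncation becomes inactive for large $\delta$, and then you test the difference equation with $\mathscr C_\lambda z$ in $H_\mu$ and close with a regularized Gronwall argument whose exponent depends only on $\|y_0\|_H$; the remaining differences (taking $\|y\|_H^2\|y\|_V^2$ directly as the Gronwall coefficient, and treating the $\mathbb P_N z$ correction with \eqref{ladyzhenskaya2d}, which produces the harmless extra $1$ in $1+\|y\|_V^2+\|\widehat y\|_V^2$, instead of the paper's $V'$ bound on $B(y_1)-B(y_2)$) are cosmetic.
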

\begin{proof}
Let $y_\delta$ be the strong solution of \eqref{eq:NS-sigma_N}, with $y_\delta(0)=y_0$, provided by \Cref{prop:wp-NS-truncated}. The idea of the proof is to show that, for $\delta$ sufficiently large, $B_\delta(y_\delta)=B(y_\delta)$ on $[0,T]$, so that $y_\delta$ solves \eqref{eq:closed-loop-NS}. We denote by $\xi_\delta(t)\in\mathcal B(y_\delta(t))$ the selection appearing in the equation. Recall that $\|\xi_\delta(t)\|_H\leq D$ for almost every $t\in(0,T)$. Testing \eqref{eq:NS-sigma_N} with $y_\delta$ in $H$ and using that $b(y_\delta,y_\delta,y_\delta)=0$, we obtain, for almost every $t\in(0,T)$,
\[\frac12\frac{d}{dt}\|y_\delta(t)\|_H^2+\|y_\delta(t)\|_V^2+\gamma_\lambda
(\mathds1_\omega\mathbb P_Ny_\delta(t),y_\delta(t))_{\mathbf{L^2}(\Omega)}+(\xi_\delta(t),y_\delta(t))_H=(\mathds1_\omega d(t),y_\delta(t))_{\mathbf{L^2}(\Omega)}.\]
We obtain the following bounds:
\[\left|\gamma_\lambda(\mathds1_\omega\mathbb P_Ny_\delta(t),y_\delta(t))
_{\mathbf{L^2}(\Omega)}\right|\leq\gamma_\lambda\|y_\delta(t)\|_H^2,\qquad|(\xi_\delta(t),y_\delta(t))_H|
\leq\frac12\|y_\delta(t)\|_H^2+\frac{D^2}{2},\]
\[|(\mathds1_\omega d(t),y_\delta(t))_{\mathbf{L^2}(\Omega)}|\leq\frac12\|d(t)\|_{\mathbf{L^2}(\omega)}^2+\frac12\|y_\delta(t)\|_H^2.\]
Therefore,
\begin{equation}
\label{eq:energy-delta}
\frac12\frac{d}{dt}\|y_\delta(t)\|_H^2+\|y_\delta(t)\|_V^2
\leq(\gamma_\lambda+1)\|y_\delta(t)\|_H^2+\frac{D^2}{2}+\frac12\|d(t)\|_{\mathbf{L^2}(\omega)}^2,\quad\text{for a.e. }t\in(0,T).
\end{equation}
Here $\gamma_\lambda>0$ is independent of $\delta$. By Gronwall's lemma and \ref{A_2},
\begin{equation}
\label{eq:bound_gronwall_CT_1}
\sup_{t\in[0,T]}\|y_\delta(t)\|_H^2+\int_0^T\|y_\delta(t)\|_V^2\,dt\leq C_T,
\end{equation}
where $C_T>0$ depends on $T,\lambda,D,\|y_0\|_H$, but is independent of $\delta$.
Thus $\{y_\delta\}$ is bounded in $L^\infty(0,T;H)\cap L^2(0,T;V)$. Now, testing \eqref{eq:NS-sigma_N} with $Sy_\delta$ in $H$, we obtain, for almost every $t\in(0,T)$,
\[\begin{aligned}
\frac12\frac{d}{dt}\|y_\delta(t)\|_V^2+\|Sy_\delta(t)\|_H^2
+\gamma_\lambda(\mathds1_\omega\mathbb P_Ny_\delta(t),Sy_\delta(t))
_{\mathbf{L^2}(\Omega)}+(B_\delta(y_\delta(t)),Sy_\delta(t))_H\\
+(\xi_\delta(t),Sy_\delta(t))_H=(\mathds1_\omega d(t),Sy_\delta(t))_{\mathbf{L^2}(\Omega)}.
\end{aligned}\]
Following the same procedure as before, we obtain
\[\left|\gamma_\lambda(\mathds1_\omega\mathbb P_Ny_\delta(t),Sy_\delta(t))_{\mathbf{L^2}(\Omega)}\right|\leq2\gamma_\lambda^2\|y_\delta(t)\|_H^2+\frac18\|Sy_\delta(t)\|_H^2,\]
\[|(\xi_\delta(t),Sy_\delta(t))_H|\leq\frac18\|Sy_\delta(t)\|_H^2+2D^2,\]
\[|(\mathds1_\omega d(t),Sy_\delta(t))_{\mathbf{L^2}(\Omega)}|\leq2\|d(t)\|_{\mathbf{L^2}(\omega)}^2+\frac18\|Sy_\delta(t)\|_H^2.\]
For the nonlinear term, the two-dimensional estimate for $B$, the definition of $B_\delta$, and Young's inequality give
\[\begin{aligned}
|(B_\delta(y_\delta(t)),Sy_\delta(t))_H|&\leq C\|y_\delta(t)\|_H^{1/2}\|y_\delta(t)\|_V\|Sy_\delta(t)\|_H^{3/2}\\
&\leq\frac18\|Sy_\delta(t)\|_H^2+C\|y_\delta(t)\|_H^2\|y_\delta(t)\|_V^4\\
&\leq\frac18\|Sy_\delta(t)\|_H^2+CC_T\|y_\delta(t)\|_V^4,
\end{aligned}\]
where we used \eqref{eq:bound_gronwall_CT_1} and $C>0$ depends only on $\Omega$. Integrating in $(0,t)$, it follows that
\[\begin{aligned}
\|y_\delta(t)\|_V^2+\int_0^t\|Sy_\delta(s)\|_H^2\,ds\leq&\|y_0\|_V^2+4\int_0^T\left(\gamma_\lambda^2\|y_\delta(s)\|_H^2+D^2+\|d(s)\|_{\mathbf{L^2}(\omega)}^2\right)\,ds\\
&+K_T\int_0^t\|y_\delta(s)\|_V^4\,ds,
\end{aligned}\]
where $K_T:=2CC_T$ is independent of $\delta$. Using \eqref{eq:bound_gronwall_CT_1} and \ref{A_2}, we obtain
\[\|y_\delta(t)\|_V^2+\int_0^t\|Sy_\delta(s)\|_H^2\,ds\leq\widetilde C_T+K_T\int_0^t\|y_\delta(s)\|_V^4\,ds,\]
where $\widetilde C_T:=\|y_0\|_V^2+4T(\gamma_\lambda^2C_T+2D^2)$ is independent of $\delta$. Since $\|y_\delta(s)\|_V^4=\|y_\delta(s)\|_V^2\|y_\delta(s)\|_V^2$, we may write
\[\|y_\delta(t)\|_V^2\leq\widetilde C_T+K_T\int_0^t\|y_\delta(s)\|_V^2\|y_\delta(s)\|_V^2\,ds.\]
On the other hand, \eqref{eq:bound_gronwall_CT_1} yields $\int_0^T\|y_\delta(s)\|_V^2\,ds\leq C_T$, uniformly with respect to $\delta$. Therefore, by the Gronwall--Bellman inequality
\cite[Lemma~1, p.~644]{Bellman1943},
\[\|y_\delta(t)\|_V^2\leq\widetilde C_T\exp\left(K_T\int_0^t\|y_\delta(s)\|_V^2\,ds\right)\leq\widetilde C_Te^{K_TC_T},\qquad t\in[0,T].\]
Moreover,
\[\int_0^T\|y_\delta(s)\|_V^4\,ds\leq\sup_{t\in[0,T]}\|y_\delta(t)\|_V^2\int_0^T\|y_\delta(s)\|_V^2\,ds\leq C_T\widetilde C_Te^{K_TC_T}.\]
Coming back to the previous energy estimate, we obtain
\[\sup_{t\in[0,T]}\|y_\delta(t)\|_V^2+\int_0^T\|Sy_\delta(s)\|_H^2\,ds\leq\widetilde C_T+(1+K_TC_T)\widetilde C_Te^{K_TC_T}.\]
Therefore, choosing $\delta>\sqrt{\widetilde C_T}\,e^{K_TC_T/2}$, we have $\|y_\delta(t)\|_V<\delta$ for every $t\in[0,T]$. By \eqref{eq:definition_B_delta}, $B_\delta(y_\delta)=B(y_\delta)$ on $[0,T]$. Hence $y:=y_\delta$ is a strong solution of
\eqref{eq:closed-loop-NS}, with the stated regularity by \Cref{prop:wp-NS-truncated}. We conclude by proving continuous dependence on the data and uniqueness. Let $y_1,y_2$ be two strong solutions corresponding to data $(y_{0,1},d_1)$ and $(y_{0,2},d_2)$ satisfying the assumptions of the proposition, with the same feedback law $G_\lambda$
and disturbance bound $D$. Let $\eta_i(t)\in\sign_{\mathbf{L^2}(\Omega)}(\mathds1_\omega\mathscr C_\lambda y_i(t))$, $i=1,2$,
be the selections appearing in their respective equations. Set $z:=y_1-y_2$. Then
\[z'+Sz+B(y_1)-B(y_2)+\gamma_\lambda P(\mathds1_\omega\mathbb P_Nz)+DP(\mathds1_\omega(\eta_1-\eta_2))=P(\mathds1_\omega(d_1-d_2)).\]
Taking the $\mu$-inner product with $z$, we obtain, for almost every $t\in(0,T)$,
\[\begin{aligned}
\frac12\frac{d}{dt}\|z(t)\|_\mu^2&+(L_\lambda z(t),z(t))_\mu+D(P(\mathds1_\omega(\eta_1(t)-\eta_2(t))),z(t))_\mu\\
&=-\langle B(y_1)-B(y_2),\mathscr C_\lambda z\rangle_{V',V}+(P(\mathds1_\omega(d_1-d_2)),z)_\mu.
\end{aligned}\]
By \eqref{eq:Llambda-coercive}, $(L_\lambda z,z)_\mu\geq\frac12\|z\|_V^2$. Moreover, by the monotonicity of the sign operator,
\[\begin{aligned}
D(P(\mathds1_\omega(\eta_1-\eta_2)),z)_\mu&=D(\eta_1-\eta_2,\mathds1_\omega\mathscr C_\lambda z)_{\mathbf{L^2}(\Omega)}\\
&=D(\eta_1-\eta_2,\mathds1_\omega\mathscr C_\lambda y_1-\mathds1_\omega\mathscr C_\lambda y_2)_{\mathbf{L^2}(\Omega)}\geq0.
\end{aligned}\]
The disturbance term satisfies
\[|(P(\mathds1_\omega(d_1-d_2)),z)_\mu|\leq\sqrt{\mu_\lambda}\|d_1-d_2\|_{\mathbf{L^2}(\omega)}\|z\|_\mu.\]
Therefore,
\[\frac12\frac{d}{dt}\|z(t)\|_\mu^2+\frac12\|z(t)\|_V^2\leq |\langle B(y_1)-B(y_2),\mathscr C_\lambda z\rangle_{V',V}|+\sqrt{\mu_\lambda}\|d_1-d_2\|_{\mathbf{L^2}(\omega)}\|z\|_\mu.\]
Since $B(y_1)-B(y_2)=B(z,y_1)+B(y_2,z)$ and $b(y_2,z,z)=0$, the trilinear estimates and $\|\mathbb P_Nz\|_V\leq\sqrt{\lambda}\|z\|_H$ give
\[\begin{aligned}
|\langle B(y_1)-B(y_2),\mathscr C_\lambda z\rangle_{V',V}|
&\leq |b(z,y_1,z)|
+(\mu_\lambda-1)\|B(y_1)-B(y_2)\|_{V'}
\|\mathbb P_Nz\|_V\\
&\leq\widetilde C_\lambda
(\|y_1\|_V+\|y_2\|_V)\|z\|_H\|z\|_V,
\end{aligned}\]
for some constant $\widetilde C_\lambda>0$.
By Young's inequality,
\[|\langle B(y_1)-B(y_2),\mathscr C_\lambda z\rangle_{V',V}|
\leq\frac14\|z\|_V^2
+2\widetilde C_\lambda^2
(\|y_1\|_V^2+\|y_2\|_V^2)\|z\|_H^2.\]
Since $\|z\|_H\leq\|z\|_\mu$, we obtain
\[\frac12\frac{d}{dt}\|z(t)\|_\mu^2+\frac14\|z(t)\|_V^2\leq2\widetilde C_\lambda^2(\|y_1(t)\|_V^2+\|y_2(t)\|_V^2)\|z(t)\|_\mu^2+\sqrt{\mu_\lambda}\|d_1(t)-d_2(t)\|_{\mathbf{L^2}(\omega)}\|z(t)\|_\mu.\]
Applying Gronwall's inequality to $(\|z(t)\|_\mu^2+\varepsilon^2)^{1/2}$ and letting $\varepsilon\to0$, we obtain
\[\begin{aligned}
\|z(t)\|_\mu\leq&\exp\left(2\widetilde C_\lambda^2\int_0^t(\|y_1(s)\|_V^2+\|y_2(s)\|_V^2)\,ds\right)\\
&\times\left(\|y_{0,1}-y_{0,2}\|_\mu+\sqrt{\mu_\lambda}\int_0^t\|d_1(s)-d_2(s)\|_{\mathbf{L^2}(\omega)}\,ds\right).
\end{aligned}\]
The energy estimate \eqref{eq:bound_gronwall_CT_1} also holds for $y_1$ and $y_2$.
If $\|y_{0,1}\|_H,\|y_{0,2}\|_H\leq R$, this estimate bounds the exponential factor uniformly in terms of
$T,R,\Omega,\omega,\lambda,D$. Using $\|z(t)\|_H\leq\|z(t)\|_\mu$ and $\|z(0)\|_\mu\leq\sqrt{\mu_\lambda}\|z(0)\|_H$ proves \eqref{eq:NS-continuous-dependence}. In particular, if $y_{0,1}=y_{0,2}$ and $d_1=d_2$, then $z(t)=0$ for every $t\in[0,T]$.
Thus the strong solution is unique.
\end{proof}

\medskip
\noindent\emph{Proof of \Cref{thm:sta-NS}.}
Fix $\lambda>0$ and consider the feedback law $G_\lambda$ defined in \eqref{eq:feedback-Stokes}. Recall that $C_\lambda=\sqrt{\mu_\lambda}$, and set
\[r_\lambda:=\frac{1}{4\sqrt{2}(\mu_\lambda-1)},\qquad\rho_\lambda:=\frac{r_\lambda}{\sqrt{\mu_\lambda}}.\]
Let $T>0$, $y_0\in H$ with $\|y_0\|_H\leq\rho_\lambda$, and let $d$ satisfy \textup{\ref{A_1}--\ref{A_2}}. Choose $y_0^k\in D(S)$ such that $y_0^k\to y_0$ in $H$ and $\|y_0^k\|_H\leq\|y_0\|_H$. Choose also $d^k\in W^{1,1}(0,T;\mathbf{L^2}(\omega))$ such that
\[d^k\to d\quad\text{in }L^1(0,T;\mathbf{L^2}(\omega)),\qquad\|d^k(t)\|_{\mathbf{L^2}(\omega)}\leq D\quad\text{for a.e. }t\in(0,T).\]
By \Cref{thm:wp-NS-strong}, the corresponding strong solutions $y^k$ exist on $[0,T]$. We first establish their exponential decay. As long as $\|y^k(t)\|_H<r_\lambda$, the a priori estimate \eqref{eq:priori_stab_NS} gives
\[\frac{d}{dt}V(y^k(t))\leq-\frac{\lambda}{2}V(y^k(t)).\]
Consequently,
\[\|y^k(t)\|_H^2\leq V(y^k(t))\leq e^{-\lambda t/2}V(y_0^k)\leq e^{-\lambda t/2}\mu_\lambda\rho_\lambda^2<\mu_\lambda\rho_\lambda^2=r_\lambda^2.\]
Since $y^k\in C([0,T];H)$, this excludes a first time at which $\|y^k(t)\|_H=r_\lambda$. Thus the estimate holds throughout $[0,T]$, and
\begin{equation}
\label{eq:NS-decay-strong}
\|y^k(t)\|_H\leq\sqrt{\mu_\lambda}e^{-\lambda t/4}\|y_0^k\|_H,\qquad t\in[0,T].
\end{equation}
By \eqref{eq:NS-continuous-dependence},
\[\|y^k-y^\ell\|_{C([0,T];H)}\leq C_{T,\rho_\lambda}\left(\|y_0^k-y_0^\ell\|_H+\|d^k-d^\ell\|_{L^1(0,T;\mathbf{L^2}(\omega))}\right).\]
Hence $\{y^k\}$ is a Cauchy sequence in $C([0,T];H)$. Denote its limit by $y$. Then $y(0)=y_0$, and $y$ is a generalized solution obtained by approximation with strong solutions, as in the Stokes case. The same continuous dependence estimate shows that the limit is independent of the approximating sequences and that the generalized solution is unique. Moreover, \eqref{eq:bound_gronwall_CT_1} gives a bound for $\{y^k\}$ in $L^2(0,T;V)$, independent of $k$. Extracting a subsequence and using the convergence in $C([0,T];H)$ to identify its weak limit, we obtain $y^k\rightharpoonup y$ in $L^2(0,T;V)$.
Therefore, $y\in C([0,T];H)\cap L^2(0,T;V)$. The two-dimensional interpolation inequality gives
\[\|y^k-y\|_{L^4(0,T;\mathbf{L^4}(\Omega))}^4\leq C\|y^k-y\|_{C([0,T];H)}^2\|y^k-y\|_{L^2(0,T;V)}^2\longrightarrow0.\]
Since $\{y^k\}$ is bounded in $L^4(0,T;\mathbf{L^4}(\Omega))$, the estimate 
\[\|B(u)-B(v)\|_{V'}\leq C(\|u\|_{\mathbf{L^4}(\Omega)}+\|v\|_{\mathbf{L^4}(\Omega)})\|u-v\|_{\mathbf{L^4}(\Omega)}\]
and H\"older's inequality yield $B(y^k)\to B(y)$ in $L^2(0,T;V')$. Arguing as in the proof of \Cref{thm:sta-Stokes}, we extract a subsequence of the sign selections converging weakly-$*$ in $L^\infty(0,T;\mathbf{L^2}(\Omega))$ to a function $\eta$ satisfying
\[\eta(t)\in\sign_{\mathbf{L^2}(\Omega)}(\mathds1_\omega\mathscr C_\lambda y(t))\quad\text{for a.e. }t\in(0,T).\]
Passing to the limit in \eqref{eq:NS-abstract} in the sense of distributions, and observing that all terms other than $y'$ belong to $L^2(0,T;V')$, we obtain $y'\in L^2(0,T;V')$ and
\[y'+L_\lambda y+B(y)+DP(\mathds1_\omega\eta)=P(\mathds1_\omega d)\quad\text{in }L^2(0,T;V').\]
Passing to the limit in \eqref{eq:NS-decay-strong}, using $y^k\to y$ in $C([0,T];H)$ and $y_0^k\to y_0$ in $H$, we conclude that
\[\|y(t)\|_H\leq C_\lambda e^{-\lambda t/4}\|y_0\|_H,\qquad t\in[0,T].\]
Since $T>0$ is arbitrary and uniqueness ensures that the solutions agree on overlapping time intervals, we obtain a unique global generalized solution $y\in C([0,\infty);H)\cap L^2_{\mathrm{loc}}(0,\infty;V)$ satisfying the claimed exponential decay estimate.
\qed

\section{Rapid stabilization of the three-dimensional Navier--Stokes system}
\label{sec:sta-NS-3D}
In this section, we prove \Cref{thm:sta-NS-3D}, establishing global strong well-posedness and rapid exponential stabilization for initial data satisfying \eqref{eq:smallness-NS-3D}. We consider \eqref{eq:NS-abstract} in dimension $n=3$, with the same feedback law \eqref{eq:feedback-Stokes}. The truncated solutions are already provided by \Cref{prop:wp-NS-truncated}. We now combine the weighted Lyapunov estimate in $H$ with a uniform bound in $V$ to remove the truncation. The three-dimensional energy estimate contains a term of order $\|y\|_V^6$. If the Stokes dissipation is reduced to a multiple of $\|y\|_V^2$, following line by line the small invariant ball argument of \cite[Section~5.7]{barbu2010} also restricts the size of the forcing. In our case that means asking for small perturbations, which is not interesting in our setting. Here, the disturbance is canceled in the weighted $H$ estimate. We retain $\|Sy\|_H^2$ in the $V$ estimate and use
\[\|y\|_V^4\leq\|y\|_H^2\|Sy\|_H^2.\]
Smallness in $H$ then provides enough dissipation to keep the solution in a prescribed bounded set in $V$, for any fixed disturbance bound $D$.
\begin{proof}[Proof of \Cref{thm:sta-NS-3D}]
Fix $T>0$. We first assume that $y_0\in D(S)$ and $d\in W^{1,1}(0,T;\mathbf{L^2}(\omega))$. Fix $\delta>R$ and let $y_\delta$ be the strong solution provided by \Cref{prop:wp-NS-truncated}. We derive estimates independent of $\delta$ and then show that the truncation remains inactive. Writing $B_\delta(y)=\theta_\delta(y)B(y)$, where $0\leq\theta_\delta(y)\leq1$, \Cref{lem:finite-projection-convection} gives
\begin{equation}\label{eq:convection-Lyapunov-3D}
|\langle B_\delta(y),\mathscr C_\lambda y\rangle_{V',V}|
\leq a_\lambda\|y\|_H^3,\qquad y\in V.
\end{equation}
Let
\[E_\delta(t):=\|y_\delta(t)\|_\mu^2,\qquad X_\delta(t):=\|y_\delta(t)\|_V^2,\qquad Z_\delta(t):=\|Sy_\delta(t)\|_H^2.\]
The estimate for the linear part obtained in \Cref{sec:sta-Stokes}, together with the cancellation between the disturbance and the sign feedback, gives
\begin{equation}\label{eq:weighted-energy-3D}
E_\delta'+\frac12X_\delta+\frac\lambda2E_\delta\leq 2a_\lambda\|y_\delta\|_H^3\quad\text{for a.e. }t\in(0,T).
\end{equation}
We next estimate the $V$ norm. Let $\xi_\delta(t)\in\mathcal B(y_\delta(t))$ be the selection appearing in the equation, and set
\[F_\delta:=P(\mathds1_\omega d)-\gamma_\lambda P(\mathds1_\omega\mathbb P_Ny_\delta)-\xi_\delta.\]
Then
\[y_\delta'+Sy_\delta+B_\delta(y_\delta)=F_\delta,\qquad \|F_\delta\|_H\leq 2D+\gamma_\lambda\|y_\delta\|_H.\]
Testing this equation with $Sy_\delta$ and using \eqref{eq:B-3d},
\[|(B_\delta(y_\delta),Sy_\delta)_H|\leq C\|y_\delta\|_V^{3/2}\|Sy_\delta\|_H^{3/2},\]
Young's inequality yields a constant $C_*>0$, depending only on $\Omega$, such that
\begin{equation}\label{eq:V-energy-3D}
X_\delta'+Z_\delta \leq C_*X_\delta^3 +C_*\bigl(D^2+\gamma_\lambda^2\|y_\delta\|_H^2\bigr) \quad\text{for a.e. }t\in(0,T).
\end{equation}
Choose $h>0$ sufficiently small that
\begin{equation}\label{eq:h-choice-3D}
\frac{2a_\lambda h}{\tau_1}\leq\frac14,\qquad C_*R^2h^2\leq\frac12,\qquad C_*h^2(D^2+\gamma_\lambda^2h^2)\leq\frac{R^4}{8},
\end{equation}
and define
\[\rho_{\lambda,D,R}:=\frac{h}{2\sqrt{\mu_\lambda}}.\]
Such a choice is possible for every $\lambda,D,R>0$. We now close the bootstrap argument
\begin{equation}\label{eq:bootstrap-3D}
\|y_\delta(t)\|_H\leq h,\qquad X_\delta(t)\leq R^2.
\end{equation}
The initial conditions \eqref{eq:smallness-NS-3D} give
\[E_\delta(0)\leq\frac{h^2}{4},\qquad X_\delta(0)\leq\frac{R^2}{4}.\]
Define
\[T_0:=\sup\bigl\{t\in[0,T]:\|y_\delta(s)\|_H\leq h,\quad X_\delta(s)\leq R^2
\ \text{for every }s\in[0,t]\bigr\}.\]
The strict initial bounds and the continuity of $y_\delta$ in $V$
imply that $T_0>0$. By continuity, \eqref{eq:bootstrap-3D}
holds on $[0,T_0]$. By the spectral decomposition,
\[\|y_\delta\|_H^3\leq \frac{\|y_\delta\|_H}{\tau_1}X_\delta.\]
Thus, \eqref{eq:weighted-energy-3D} and the first condition in \eqref{eq:h-choice-3D} imply
\[E_\delta'+\frac14X_\delta+\frac\lambda2E_\delta\leq0.\]
Consequently,
\begin{equation}\label{eq:weighted-decay-3D}
e^{\lambda t/2}E_\delta(t)
+\frac14\int_0^t e^{\lambda s/2}X_\delta(s)\,ds
\leq E_\delta(0),
\end{equation}
and in particular
\[\|y_\delta(t)\|_H\leq \sqrt{\mu_\lambda}e^{-\lambda t/4}\|y_0\|_H\leq \frac h2.\]
To improve the bound in $V$, we retain the full Stokes dissipation. For almost every $t$,
\begin{equation}\label{eq:interpolation-3D}
X_\delta
=(Sy_\delta,y_\delta)_H
\leq \sqrt{Z_\delta}\,\|y_\delta\|_H,
\qquad
X_\delta^2\leq h^2Z_\delta.
\end{equation}
Therefore, on the bootstrap interval,
\[C_*X_\delta^3 \leq C_*R^2h^2Z_\delta \leq\frac12Z_\delta.\]
Using \eqref{eq:V-energy-3D}, we obtain
\[X_\delta'+\frac12Z_\delta\leq C_*(D^2+\gamma_\lambda^2h^2).\]
Combining this inequality with \eqref{eq:interpolation-3D} and the last condition in \eqref{eq:h-choice-3D} gives
\begin{equation}\label{eq:scalar-barrier-3D}
X_\delta'+\frac{X_\delta^2}{2h^2}
\leq\frac{R^4}{8h^2}.
\end{equation}
The constant function $R^2/2$ solves the corresponding scalar differential equation. Since $X_\delta(0)\leq R^2/4$, scalar comparison yields
\[X_\delta(t)\leq\frac{R^2}{2}.\]
Thus, for every $t\in[0,T_0]$,
\[\|y_\delta(t)\|_H\leq \frac h2<h,\qquad
X_\delta(t)\leq \frac{R^2}{2}<R^2.\]
If $T_0<T$, continuity would extend both bootstrap bounds
to a larger initial interval, contradicting the definition of $T_0$. Hence $T_0=T$, and these estimates hold
throughout $[0,T]$. Since $\delta>R$, we have $\|y_\delta(t)\|_V<\delta$ for every $t\in[0,T]$. Hence $B_\delta(y_\delta)=B(y_\delta)$, and $y_\delta$ solves the original closed-loop system. This proves existence and the stated estimates for regular data. We now consider $y_0\in V$ satisfying
\eqref{eq:smallness-NS-3D} and a disturbance satisfying only \textup{\ref{A_1}--\ref{A_2}}. Fix $T>0$. As in \Cref{rem:data-approximation}, choose $y_0^k\in D(S)$ and $d^k\in W^{1,1}(0,T;\mathbf{L^2}(\omega))$ such that
\[y_0^k\to y_0\quad\text{in }V,\qquad d^k\to d\quad\text{in }L^2(0,T;\mathbf{L^2}(\omega)),\]
with
\[\|y_0^k\|_H\leq\|y_0\|_H,\qquad\|y_0^k\|_V\leq\|y_0\|_V,\qquad\|d^k(t)\|_{\mathbf{L^2}(\omega)}\leq D.\]
Let $y^k$ be the corresponding strong solutions. All these solutions satisfy the estimates above and solve the same truncated problem for the fixed choice $\delta>R$. By \eqref{eq:truncated-dependence}, $\{y^k\}$ is Cauchy in $C([0,T];H)$. Denote its limit by $y$. The preceding estimates also give
\[\sup_{t\in[0,T]}\|y^k(t)\|_V^2\leq\frac{R^2}{2},\qquad \frac12\int_0^T\|Sy^k(s)\|_H^2 ds\leq\|y_0^k\|_V^2+C_*T(D^2+\gamma_\lambda^2h^2).\]
Moreover, the three-dimensional convection estimate yields
\[\int_0^T\|B(y^k(s))\|_H^2 ds\leq CR^3T^{1/2}\left(\int_0^T\|Sy^k(s)\|_H^2 ds\right)^{1/2}.\]
The equation and the uniform bound on the sign feedback therefore show that $\{(y^k)'\}$ is bounded in $L^2(0,T;H)$. Extracting a subsequence, we obtain
\[y^k\rightharpoonup y \quad\text{in }H^1(0,T;H)\cap L^2(0,T;D(S)).\]
In particular, $y\in C([0,T];V)$. The interpolation identity also gives
\[\|y^k-y\|_{L^2(0,T;V)}^2\leq\|S(y^k-y)\|_{L^2(0,T;H)} \|y^k-y\|_{L^2(0,T;H)} \longrightarrow0.\]
Using the uniform bounds in $L^\infty(0,T;V)$ and the local Lipschitz continuity of $B:V\to V'$, we conclude that
\[B(y^k)\to B(y)\quad\text{in }L^2(0,T;V').\]
Together with the uniform bound in $L^2(0,T;H)$, this identifies the weak limit of $B(y^k)$ in that space with $B(y)$. As in the proof of \Cref{thm:sta-Stokes}, choose measurable sign selections $\eta^k$ appearing in the equations for $y^k$. Up to a subsequence,
\[\eta^k\rightharpoonup^\ast\eta \quad\text{in }L^\infty(0,T;\mathbf{L^2}(\Omega)).\]
Since $\mathds1_\omega\mathscr C_\lambda y^k \to\mathds1_\omega\mathscr C_\lambda y$ strongly in $L^2(0,T;\mathbf{L^2}(\Omega))$, the strong--weak closedness of the pointwise realization of the sign operator gives
\[\eta(t)\in\sign_{\mathbf{L^2}(\Omega)}(\mathds1_\omega\mathscr C_\lambda y(t))\quad\text{for a.e. }t\in(0,T).\]
Passing to the limit in the equations yields
\[y'+Sy+B(y)+\gamma_\lambda P(\mathds1_\omega\mathbb P_Ny)+DP(\mathds1_\omega\eta)=P(\mathds1_\omega d)\quad\text{in }L^2(0,T;H).\]
Thus $y$ is a strong solution with the asserted regularity.
The convergence in $C([0,T];H)$ gives
\eqref{eq:decay-NS-3D}. Weak lower semicontinuity and continuity in $V$ give
\[\sup_{t\in[0,T]}\|y(t)\|_V\leq\frac{R}{\sqrt2}.\]
Finally, let $y$ and $\widehat y$ be two strong solutions with the same data on $[0,T]$. Since both belong to $C([0,T];V)$, we may choose $\delta$ larger than their $V$ norms throughout this interval. Both solutions then solve the same truncated problem. The monotonicity calculation used to prove
\eqref{eq:truncated-dependence} applies to their $H^1(0,T;H)$ regularity and gives $y=\widehat y$. Since $T>0$ is arbitrary, uniqueness allows us to join
the solutions on overlapping intervals and obtain the global strong solution. For regular data, this solution coincides with the one constructed in the first part of the proof, which gives the additional $W^{1,\infty}(0,T;H)\cap L^\infty(0,T;D(S))$ regularity.
\end{proof}

\begin{rem}
We do not need smallness in $V$, the bound $R$ and the disturbance bound $D$ may be arbitrarily large, while the admissible radius $\rho_{\lambda,D,R}$ depends on both. Since $\|S^{1/4}y_0\|_H^2\leq\|y_0\|_H\|y_0\|_V$, smallness in $H$ with a fixed bound in $V$ also yields smallness in the critical space $D(S^{1/4})$; compare \cite{bt2004} and \cite[Section~1.4.1]{lasieckapriyasadtriggiani2021}.
\end{rem}

\begin{rem}
Recently, in \cite{chaves2025optimal} a new spectral inequality with vanishing components has been proved. Thanks to this result, one can obtain  analogous stabilization results in the case where the control and the disturbance act only on $(n-1)$ components. More precisely, the control may be chosen so that $u_j\equiv 0$  for some $j\in{1,\dots,n}$.
\end{rem}
\bibliographystyle{siam}   % o alpha, siam, amsplain, etc.
\bibliography{bibliography}
\end{document}